\numberwithin{equation}{section}
\providecommand\@dotsep{5}
\renewcommand{\listoftodos}[1][\@todonotes@todolistname]{%
  \@starttoc{tdo}{#1}}
\definecolor{darkred}{rgb}{1,0,0} 
\definecolor{darkgreen}{rgb}{0,0.8,0}
\definecolor{darkblue}{rgb}{0,0,1}
 \newcommand{\into}{\lrcorner\,}
 \newcommand{\Lie}{\mathcal{L}}
 \newcommand{\N}{\mathds{N}}
 \newcommand{\Z}{\mathds{Z}}
 \newcommand{\Q}{\mathds{Q}}
 \newcommand{\R}{\mathds{R}}
 \newcommand{\TT}{\mathcal{T}}
 \newcommand{\GG}{\mathcal{G}}
 \newcommand{\id}{\mathrm{id}}
\newtheoremstyle{personal}%
{15pt}
{15pt}
{\itshape}
{}
{\bfseries}
{.}
{.5em}
{}
\newtheoremstyle{personal_definition}%
{12pt}
{12pt}
{}
{}
{\bfseries}
{.}
{.5em}
{}
\theoremstyle{personal}%
\newtheorem{theorem}{Theorem}[section]
\newtheorem{maintheorem}{Theorem}
\newtheorem{maincorollary}[maintheorem]{Corollary}
\newtheorem{lemma}[theorem]{Lemma}
\newtheorem{proposition}[theorem]{Proposition}
\theoremstyle{personal_definition}
\newtheorem{remark}[theorem]{Remark}
\newtheorem{example}[theorem]{Example}
\title[Length spectrum rigidity and flexibility of spheres of revolution]{Length spectrum rigidity and flexibility of spheres of revolution with one equator}
\author[A. Abbondandolo]{Alberto Abbondandolo}
\address{Alberto Abbondandolo\newline\indent Ruhr Universit\"at Bochum, Fakult\"at f\"ur Mathematik\newline\indent Geb\"aude IB 3/65, D-44801 Bochum, Germany}
\email{alberto.abbondandolo@rub.de}
\author[M. Mazzucchelli]{Marco Mazzucchelli}
\address{Marco Mazzucchelli\newline\indent Sorbonne Université, Université Paris Cité, CNRS, IMJ-PRG\newline\indent F-75005 Paris, France}
\email{marco.mazzucchelli@imj-prg.fr}
\keywords{Spheres of revolution, geometric inverse problems, Abel transform, contact $S^1$-manifolds, geodesic flow}
\date{January 23, 2026}
\subjclass[2020]{53D25, 37J35, 37C27}
\begin{document}

\begin{abstract}
We define a notion of marked length spectrum for $S^1$-symmetric Riemannian metrics on the two-sphere having only one equator. We prove that isospectral metrics in this class have conjugate geodesic flows. Under a further $\Z_2$-symmetry assumption, we show that the marked length spectrum determines the metric. Finally, we show that every isospectral class of metrics contains a unique $\Z_2$-symmetric metric and give an explicit description of this isospectral class as an infinite dimensional convex set, generalizing the known description of $S^1$-symmetric Zoll metrics. This paper contains also two appendices, in which we provide an elementary proof of the fact that a $C^2$ real valued function on an interval is determined by the set of tangent lines to its graph, and we classify a class of $S^1$-invariant contact forms on three-manifolds.
\tableofcontents
\end{abstract}

\maketitle

\vspace{-30pt}

\section*{Introduction}\label{s:intro}

An important question in dynamics is to understand how much information is encoded in the periodic orbits of a system. In the case of geodesic flows on a negatively curved orientable closed Riemannian surface $(M,g)$, this question is completely answered by the celebrated rigidity theorem of Otal \cite{ota90} and Croke \cite{cro90}: the map that assigns to each non-trivial free homotopy class of closed loops in $M$ the length of the unique closed geodesic in that class determines the metric $g$ up to pull-back by a diffeomorphism homotopic to the identity.

This metric rigidity phenomenon crucially relies on the hyperbolicity of the geodesic flow. It holds more generally for Anosov geodesic flows on orientable closed surfaces \cite{glp25}, but has no analogue on surfaces such as the two-sphere, where uniform hyperbolicity cannot occur. For instance, on the two-sphere we have a large class of \textit{Zoll metrics}, i.e., Riemannian metrics all of whose geodesics are closed and of equal length, see \cites{zol03,gui76,bes78}. Any two Zoll metrics with the same common geodesic length are indistinguishable in terms of the data encoded by their closed geodesics, but are in general not isometric. However, it is worth noting that here dynamical rigidity still holds: the geodesic flows of any two Zoll metrics on the two-sphere with the same common geodesic length are smoothly conjugate \cite[Appendix B]{abhs17}.

Motivated by the example of Zoll metrics, in this paper we undertake a case study of a special class of metrics on the two-sphere. 

We consider the unit sphere $S^2 \subset \R^3$ equipped with the $S^1$-action given by rotations about the $z$-axis, and denote by $\mathcal{G}$ the set of $S^1$-invariant smooth Riemannian metrics on $S^2$ that possess a unique equator, i.e., a single $S^1$-invariant unoriented closed geodesic. Spheres of revolution in $\R^3$ with a unique equator are in this class, but there are metrics $g\in \mathcal{G}$ such that $(S^2,g)$ does not embed isometrically into $\R^3$ as a sphere of revolution.

Any unoriented closed geodesic $\gamma$ that is neither the equator nor a meridian (i.e., a closed geodesic passing through the two fixed points of the $S^1$-action) has a non-zero winding number $p \in \N$ around the $z$-axis and intersects the equator transversely $2q$ times, for some $q \in \N$ with $\gcd(p,q) = 1$. Such a geodesic is said to be of \textit{type} $(p,q)$. For a fixed coprime pair $(p,q) \in \N \times \N$, we denote by $\mathcal{L}_g(p,q)$ the subset of $(0,+\infty)$ consisting of the lengths of all closed geodesics of type $(p,q)$. This set may be empty, finite, infinite, or even uncountable. 

By \textit{marked length spectrum} of a metric $g\in \mathcal{G}$, we here mean the data given by the length $\ell$ of the equator and the set-valued function $\mathcal{L}_g$. We say that two metrics $g_1$ and $g_2$ in $\mathcal{G}$ are \textit{isospectral} if they have the same marked length spectrum. Note that in this definition of marked length spectrum we do not keep track of the length of the meridians, and we do not record how many $S^1$-families of closed geodesics of type $(p,q)$ have length $\ell\in \mathcal{L}_g(p,q)$. It follows from Theorems \ref{thmA} and \ref{thmB} below that these data are nevertheless determined by the marked length spectrum.

Given $g \in \mathcal{G}$, we denote by $T^1_g S^2$ the unit tangent bundle of $(S^2,g)$ and recall that $T^1_g S^2$ has a co-oriented contact structure which is induced by the canonical contact structure of the spherical cotangent bundle $ST^*S^2$ using the isomorphism $TS^2 \cong T^*S^2$ induced by the metric. The geodesic flow $\phi^t_g$ on $T^1_g S^2$ is the Reeb flow of a contact form $\alpha_g$ defining this contact structure, which goes under the name of Hilbert form.

The $S^1$-action on $S^2$ lifts to a free $S^1$-action on $T^1_g S^2$. Denote by $\Gamma_g$ the subset of $T^1_g S^2$ consisting of the two orbits of the geodesic flow corresponding to the equator. These are precisely the two orbits of the $S^1$-action on $T^1_g S^2$ which are also orbits of the geodesic flow. 

It is easy to see that if the geodesic flows of two metrics $g_1$ and $g_2$ in $\mathcal{G}$ are $S^1$-equivariantly conjugate, then the metrics are isospectral (see Proposition \ref{p:coniso} below). Conversely, we have the following dynamical rigidity result.

\begin{maintheorem}
\label{thmA}
Let $g_1, g_2 \in \mathcal{G}$ be smooth $($resp.\ analytic$)$ isospectral metrics. Then there exists a smooth $($resp.\ analytic$)$ $S^1$-equivariant contactomorphism
\[
h: T^1_{g_1}S^2 \setminus \Gamma_{g_1} \rightarrow T^1_{g_2}S^2 \setminus \Gamma_{g_2}
\]
which conjugates the geodesic flows of $g_1$ and $g_2$, i.e. 
\[h\circ \phi^t_{g_1} = \phi^t_{g_2} \circ h,\qquad\forall t\in \R.\] 
Furthermore$:$
\begin{enumerate}[$(i)$]
\item If the curvature of $g_1$ along the equator is positive, then the same holds for $g_2$, and the above map $h$ can be chosen in such a way that it
extends to a smooth $($resp.\ analytic$)$ conjugacy from $T^1_{g_1} S^2$ to $T^1_{g_2} S^2$.
\item If the curvature of $g_1$ along the equator does not vanish to infinite order, then the same holds for $g_2$, and the above map $h$ can be chosen in such a way that it extends to a continuous conjugacy from $T^1_{g_1} S^2$ to $T^1_{g_2} S^2$.
\end{enumerate}
\end{maintheorem}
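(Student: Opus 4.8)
The plan is to exploit the complete integrability of the geodesic flow of any $g \in \GG$ coming from the rotational symmetry. Writing the metric in geodesic-polar coordinates $g = dr^2 + f(r)^2\,d\theta^2$, where $f$ is unimodal with maximum $f_{\max} = \ell/2\pi$ at the equator $r=r_e$, the Clairaut momentum $J = \alpha_g(X)$ (the contact Hamiltonian of the generator $X$ of the $S^1$-action) is a first integral, and its regular level sets $\{J=c\}$ for $c$ in $(-J_{\max}, J_{\max})$ are invariant $2$-tori on which the geodesic flow is linear. On each such torus the flow is encoded by two functions of $c$: the rotation number $\rho(c)$ of the flow relative to the $S^1$-orbits, and the length $L(c)$ of one radial oscillation (these are Abel-type transforms of $f$, but for the conjugacy only $\rho$ and $L$ are needed, not $f$ itself). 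A closed geodesic of type $(p,q)$ is precisely a Reeb orbit on a torus with $\rho(c) = p/q$, and its length is $q\,L(c)$; thus the marked length spectrum records, for every coprime $(p,q)$, the set $\{q\,L(c): \rho(c) = p/q\}$.

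I would first argue that the pair $(\rho, L)$, as a function of $c$, is a complete invariant for the map $h$ being sought: since $h$ conjugates the Reeb flows and is a contactomorphism it must be strict, $h^*\alpha_{g_2} = \alpha_{g_1}$, and being $S^1$-equivariant it preserves the momentum, $J_2 \circ h = J_1$; hence it is forced to match $\rho$ and $L$ at equal values of $c$. The crucial point is that these two functions are linked: restricting the Hilbert form to $\{J=c\}$ in angle coordinates, the coefficient along the radial cycle equals the radial action $A(c)$, with $A'(c) = -\rho(c)$, which yields the universal identity $L'(c) = 2\pi\,c\,\rho'(c)$, equivalently $dL/d\rho = 2\pi c$ along the planar curve $\CC = \{(\rho(c), L(c))\}$. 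This lets one recover the momentum parametrization of $\CC$ from the shape of $\CC$ alone. Isospectrality then gives $\CC_{g_1} = \CC_{g_2}$: from $\mathcal L_g(p,q)$ one reads off $\{L(c): \rho(c) = p/q\}$ for every rational $p/q$, and by continuity of $\rho, L$ together with density of the rationals this recovers the curve; the universal relation recovers the common parametrization, so $\rho_{g_1} = \rho_{g_2}$ and $L_{g_1} = L_{g_2}$ identically.

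I expect the \emph{main obstacle} to lie exactly here: the rotation number $\rho$ need not be monotonic, so $\CC$ may fold and a rational level $\rho = p/q$ may meet it in several points. Disentangling these branches, and showing that the full \emph{parametrized} curve (not merely its image) is determined by the set-valued spectral data, is delicate; one must argue carefully near folds $\rho'(c)=0$, where $L'(c)=0$ as well, and anchor one endpoint using $\ell$. Once the invariants are matched, I would build $h$ fiberwise over the momentum interval: on each regular torus the Hilbert form becomes $c\,d\theta_1 + A(c)\,d\theta_2$, and the two systems are literally identified. Assembling these identifications into one smooth (resp. analytic) $S^1$-equivariant strict contactomorphism on $T^1_{g_1}S^2 \setminus \Gamma_{g_1}$, including across the singular meridian level $c=0$, is precisely the normal-form statement for $S^1$-invariant contact forms proved in the appendix, which I would invoke to obtain $h\circ\phi^t_{g_1} = \phi^t_{g_2}\circ h$.

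Finally, the extension across the equator orbits $\Gamma$ in cases $(i)$ and $(ii)$ is an analysis of the behavior of $(\rho, L)$ as $c \to J_{\max}$. When the curvature $K_e$ at the equator is positive, $\Gamma$ is an elliptic closed Reeb orbit, a Birkhoff-type normal form holds near it, the functions $\rho, L$ extend smoothly (resp. analytically) with $\rho(J_{\max}) = 1/(f_{\max}\sqrt{K_e})$ recovering $K_e$, and matching their full jets — which follows from their identity on $(0,J_{\max})$ together with recovery from the spectrum up to the endpoint — lets $h$ extend across $\Gamma$. When $K_e$ merely does not vanish to infinite order, the same asymptotics yield only continuous matching data and hence a continuous extension. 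In the remaining degenerate cases no extension can be expected, which is why the statement is phrased on the complement of $\Gamma$.
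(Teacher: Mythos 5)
Your skeleton is essentially the paper's: your curve $\CC=\{(\rho(c),L(c))\}$ is exactly the paper's set of tangent lines to the graph of the generating function $F$ of the Birkhoff return map (with $\rho=F'/2\pi$, $L=r_{\max}(\eta F'(\eta)-F(\eta))$, $c=r_{\max}\eta$), and your identity $dL/d\rho=2\pi c$ is the paper's relation $d\tau=\varphi^*\lambda-\lambda$. But the step you flag as the ``main obstacle'' -- disentangling the branches of $\CC$ at folds where $\rho'$ (equivalently $F''$) vanishes, possibly on a set with empty interior or on whole intervals, with overlapping branches -- is the actual heart of the proof, and you leave it unresolved. Your relation $dL/d\rho=2\pi c$ recovers the parametrization only where $\rho'\neq 0$ and does not by itself rule out two distinct systems tracing the same unparametrized set: the paper needs a genuine theorem here, namely that a $C^2$ function on an interval is determined by the (unmarked) set of tangent lines to its graph, proved in its Appendix A via Legendre duality on the at most countably many maximal monotonicity intervals of $F'$, a Sard argument for the image of $\{F''=0\}$, and a Baire category argument to match branches. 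Without this (or Richardson's $C^{1,1}$ result), your reconstruction of $\rho_{g_1}=\rho_{g_2}$, $L_{g_1}=L_{g_2}$ is incomplete. A second structural slip: the Appendix B classification of $S^1$-invariant contact forms requires the induced Hamiltonian on the quotient to be a \emph{perfect Morse} function, i.e.\ positive curvature at the equator, so it cannot be invoked, as you do, to assemble the conjugacy on $T^1_{g_1}S^2\setminus\Gamma_{g_1}$ in general; the paper instead builds $h$ there elementarily, by dynamical continuation of the tautological identification of the Birkhoff annuli (equivalently, via the mapping torus of $(\varphi,\tau)$), which works even when the return time is unbounded.

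The extensions across $\Gamma$ also need more than you provide. In case $(i)$, ``matching full jets of $(\rho,L)$ lets $h$ extend'' is false for the dynamically continued $h$: the paper's Example \ref{e:not-smooth} exhibits isometric metrics with positive equatorial curvature for which that $h$ has no $C^2$ extension at $\Gamma$; one must construct a \emph{different} conjugacy, which the paper does by proving that the triple $(J,T,\Theta)$ is a complete invariant of the $S^1$-invariant contact form (Theorem \ref{mainappB}, via the isochore Morse lemma and a Moser homotopy reducing to a cohomological equation $dh(X_H)=fH$). In case $(ii)$ two things are missing: first, that finite vanishing order of the curvature at the equator is itself a spectral invariant, so the hypothesis transfers to $g_2$ -- the paper derives this (Proposition \ref{p:order}) from Theorem \ref{thmB}, i.e.\ from injectivity of the Abel transform, machinery absent from your outline; second, continuity at $\Gamma$ is not formal, since the return time diverges there, so convergence of ``matching data'' does not control orbits over the whole window $[0,\tau(v)]$. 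The paper needs the long-time stability estimate of Lemma \ref{l:stability} (the longitude shift stays uniformly close to ${\textstyle\frac{\cos\beta_0}{r_{\max}}}t$ up to the return time), which uses concavity of the profile near its maximum -- exactly what finite vanishing order buys -- and Section \ref{s:unstable} shows this estimate genuinely fails when the curvature changes sign near the equator, so no soft asymptotic argument can close this step.
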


In particular, we have a continuous conjugacy as in statement (ii) when one of the isospectral metrics is analytic. Actually, this continuous conjugacy exists under the more general assumption that the curvature of both isospectral smooth metrics is non-negative near the equator, see Proposition \ref{p:non-neg-curv} below. However, the order of vanishing of the curvature at the equator is invariant under the isospectrality equivalence relation (see Proposition \ref{p:order} below), whereas the weaker condition of having non-negative curvature near the equator is not, as it can be deduced from Theorem \ref{thmB} below. This is why we are stating (ii) in the above form.

An explicit example shows that if the curvature at the equator vanishes, then the conjugacy may fail to extend smoothly to the two orbits corresponding to the equator. If the curvature near the equator is somewhere negative, then a crucial stability property of the equator which is used in order to prove that the conjugacy we construct extends continuously to these two orbits can fail. This suggests that there could exist two isospectral metrics violating the assumption of (ii) for which no continuous conjugacy between the corresponding geodesic flows exists. However, we do not know how to construct such examples.

We now turn to the question of determining the isospectral class of a metric in $\mathcal{G}$. Any metric $g$ in $\mathcal{G}$ can be uniquely written as
\[
g = d\sigma^2 + r(\sigma)^2 d\theta^2,
\]
where $\theta \in \R/2\pi \Z$ denotes the longitude angle on $S^2$, and $\sigma$ is the $g$-length parameter along the meridians starting from the south pole $S=(0,0,-1)$. Here, $r$ is a smooth function on $[0, m]$ which is positive on $(0, m)$, vanishes at $0$ and $m$, and has a unique critical point (necessarily a maximum), corresponding to the equator. The quantity $m > 0$ denotes the length of the meridian arc from one pole to the other. The condition that $g$ is smooth (resp.\ analytic) is equivalent to the fact that $r$ extends to a smooth (resp.\ analytic) $2m$-periodic odd function on $\R$ such that $r'(0) = 1 = -r'(m)$. We refer to the function $r : [0, m] \rightarrow \R$ as the \textit{profile function} of $g$. For instance, the profile function of the round sphere with curvature $1$ is given by $r(\sigma) = \sin \sigma$ on the interval $[0, \pi]$.

In the next result, we characterize the profile functions of isospectral metrics in $\mathcal{G}$ or -- equivalently -- of metrics in $\mathcal{G}$ with conjugate geodesic flows in the sense of Theorem \ref{thmA}.

\begin{maintheorem}
\label{thmB}
Let $r_1 : [0, m_1] \rightarrow \R$ and $r_2 : [0, m_2] \rightarrow \R$ be the profile functions of metrics $g_1, g_2 \in \mathcal{G}$. Then $g_1$ and $g_2$ are isospectral if, and only if, for every $\rho \geq 0$ we have
\[
\mathrm{length} \left( \{ \sigma \in [0, m_1] \mid r_1(\sigma) \geq \rho \} \right) = \mathrm{length} \left( \{ \sigma \in [0, m_2] \mid r_2(\sigma) \geq \rho \} \right),
\]
where $\mathrm{length}(I)$ denotes the length of the interval $I\subset \R$.
\end{maintheorem}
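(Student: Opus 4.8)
The plan is to reduce the whole problem to a single measure attached to the profile function, namely the pushforward $\nu$ of the Lebesgue measure on $[0,m]$ under $r$. By definition $\nu\big([\rho,+\infty)\big)=\mathrm{length}\big(\{\sigma: r(\sigma)\ge\rho\}\big)$, so the asserted condition is literally $\nu_1=\nu_2$, and it suffices to show that the marked length spectrum determines $\nu$ and conversely. First I would record the elementary geometry. Writing $g=d\sigma^2+r(\sigma)^2\,d\theta^2$, a unit-speed geodesic conserves the Clairaut integral $c=r^2\dot\theta\in[0,r_{\max}]$, where $r_{\max}=\max r=r(\sigma_{\mathrm{eq}})$, and satisfies $\dot\sigma^2=1-c^2/r^2$. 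For $0<c<r_{\max}$ the orbit oscillates in $\{r\ge c\}=[a(c),b(c)]$, and over one half-oscillation it advances by the arclength $L(c)=\int_{a(c)}^{b(c)} r\,(r^2-c^2)^{-1/2}\,d\sigma$ and by the longitude $\Theta(c)=\int_{a(c)}^{b(c)} c\,[\,r(r^2-c^2)^{1/2}\,]^{-1}\,d\sigma$. A closed geodesic of type $(p,q)$ is made of $2q$ such half-oscillations and closes up exactly when $\Theta(c)=\pi p/q$, with total length $2qL(c)$. Together with $\ell=2\pi r_{\max}$ for the equator, this shows that the marked length spectrum is encoded by the pair $(\Theta,L)$ on $(0,r_{\max})$ through the resonant values $\mathcal L_g(p,q)=\{\,2qL(c):\Theta(c)=\pi p/q\,\}$.

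Next I would split each integral at $\sigma_{\mathrm{eq}}$ and change variables to $\rho=r(\sigma)$ on the two monotone branches, which turns $L$ and $\Theta$ into Abel-type transforms of $\nu$:
\[
L(c)=\int_c^{r_{\max}}\frac{\rho}{\sqrt{\rho^2-c^2}}\,d\nu(\rho),\qquad
\Theta(c)=c\int_c^{r_{\max}}\frac{1}{\rho\sqrt{\rho^2-c^2}}\,d\nu(\rho).
\]
The decisive point is that $L$ and $\Theta$ depend on $g$ only through $\nu$ (and $r_{\max}=\sup\operatorname{supp}\nu$). This already yields the easy implication: if $\nu_1=\nu_2$ then $\Theta_1=\Theta_2$ and $L_1=L_2$, hence $\mathcal L_{g_1}=\mathcal L_{g_2}$ and $\ell_1=\ell_2$, so $g_1$ and $g_2$ are isospectral. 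It also isolates the content of the converse, namely to recover $\nu$ from the resonant data. After the substitution $x=\rho^2$, $t=c^2$ each of the two displayed transforms becomes a classical Abel integral equation and is individually invertible; hence it will suffice to reconstruct the single function $c\mapsto L(c)$ (or $c\mapsto\Theta(c)$) on all of $(0,r_{\max})$.

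The bridge between the sparse resonant values and the full functions is the identity $L'(c)=c\,\Theta'(c)$, equivalent to $S'=-\Theta$ for $S(c):=L(c)-c\,\Theta(c)=\int_c^{r_{\max}}\rho^{-1}\sqrt{\rho^2-c^2}\,d\nu(\rho)$, which one checks by differentiating under the integral sign (the boundary term at $\rho=c$ vanishes). Geometrically, along the planar curve $\CC:=\{(\Theta(c),L(c)):0<c<r_{\max}\}$ the tangent at the point of parameter $c$ is the line $\{L=c\,\Theta+S(c)\}$, of slope $c$ and intercept $S(c)$; since the slope increases with $c$, $\CC$ is a (possibly cusped) convex arc whose tangent lines are faithfully parametrised by their slope $c\in(0,r_{\max})$. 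From the marked length spectrum I recover all points of $\CC$ over rational abscissae, namely $\{L(c):\Theta(c)=\pi p/q\}=\tfrac1{2q}\mathcal L_g(p,q)$; as these abscissae are dense and $\CC$ is a continuous convex arc, this recovers $\CC$ as a subset of the plane. The result of the first appendix, relating a curve to its family of tangent lines, then lets me read off from $\CC$ the intercept $S(c)$ as a function of the slope $c$; thus $\Theta=-S'$ and $L=S-c\,S'$ are recovered as functions of $c$, and Abel inversion returns $\nu$. Since isospectral $g_1,g_2$ produce the same curve $\CC$, they produce the same $\nu$, i.e.\ $\nu_1=\nu_2$, which is the claimed equality of distribution functions.

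The main obstacle is precisely this last reconstruction when the rotation function $\Theta$ fails to be monotone. This already happens for curvature reasons, since $\Theta(0^+)=\pi$ while a Jacobi-field computation near the equator gives $\Theta(r_{\max}^-)=\pi/(r_{\max}\sqrt{K_{\mathrm{eq}}})$, and it happens more seriously at interior critical points of $\Theta$, where $L'=c\,\Theta'=0$ simultaneously, so that $\CC$ develops cusps and a vertical line $\{\Theta=\pi p/q\}$ meets it in several points. One must then show that the \emph{unordered} resonant sets $\mathcal L_g(p,q)$ still determine $\CC$ as a point set, and that the slope-to-intercept correspondence extracted from this cusped convex arc is exactly the function $S$; this is what the first appendix is designed to supply, and I expect the careful treatment of the cusps and of the envelope near them to be the technical heart of the argument. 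Two degenerate configurations need separate care: the Zoll case $\Theta\equiv\pi$, where $\CC$ collapses to a point but the vanishing of $\mathcal L_g(p,q)$ for $p/q\ne1$ still forces $\Theta\equiv\pi$ on all of $(0,r_{\max})$, so that Abel inversion pins down $\nu$ (that of the round sphere) despite the degeneracy; and the endpoint $\rho=r_{\max}$, where $d\nu$ carries an integrable singularity governed by $K_{\mathrm{eq}}$ and the convergence of the Abel integrals must be justified.
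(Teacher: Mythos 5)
Your proposal is correct and is essentially the paper's own proof in Legendre-dual clothing: your resonance identity $\mathcal{L}_g(p,q)=\{2qL(c):\Theta(c)=\pi p/q\}$ is \eqref{e:mls}, your Abel representations of $L$ and $\Theta$ in terms of $\nu$ are Proposition \ref{p:abel}, and your curve $\CC$ -- whose tangent at parameter $c$ has slope $c$ and intercept $S(c)$, with $S\leftrightarrow -F$ up to the normalizations $c=r_{\max}\eta$ and a factor $\frac{r_{\max}}{2}$ -- is, by the duality \eqref{e:legendre}, exactly the paper's point set $\overline{\mathcal{S}}$ of tangent lines to the generating function $F$, so that reading off the slope-to-intercept correspondence from $\CC$ is verbatim the application of Theorem \ref{t:T(f)} carried out in Proposition \ref{p:equalF}. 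The technical points you flag as the remaining heart of the argument (recovering $\CC$ from its points over rational abscissae when $\Theta$ is non-monotone, the Zoll degeneration where $\CC$ collapses to a point, and the behaviour at the endpoint $c\uparrow r_{\max}$) are precisely what the paper absorbs via the density-of-rational-slopes closure argument, the separate treatment of constant $F'$, and the bounded/unbounded dichotomy of Lemma \ref{l:gen_fun} (vi)--(vii).
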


In particular, choosing $\rho=0$ in the above identity, we infer that $m_1=m_2$, and hence isospectral metrics have meridians of the same length $2m_1=2m_2$.

A metric $g \in \mathcal{G}$ is said to be \textit{$\Z_2$-symmetric} if it is symmetric with respect to the reflection $(x, y, z) \mapsto (x, y, -z)$. By the $S^1$-symmetry, this is equivalent to requiring $g$ to be symmetric with respect to the antipodal involution $(x,y,z) \mapsto (-x,-y,-z)$ or -- equivalently -- to induce a smooth metric on the projective plane $\R \mathrm{P}^2$. 

A first immediate consequence of Theorem \ref{thmB} is the following result, giving us metric rigidity for $\Z_2$-symmetric metrics in $\mathcal{G}$.

\begin{maincorollary}
\label{corC}
Any two isospectral $\Z_2$-symmetric metrics in $\mathcal{G}$ coincide.
\end{maincorollary}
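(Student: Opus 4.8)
The plan is to reduce everything to the equimeasurability criterion of Theorem~\ref{thmB} and to exploit the fact that $\Z_2$-symmetry forces each profile function to be strictly unimodal and symmetric about the midpoint of its domain, a situation in which the distribution function of the superlevel sets determines the function completely. So let $r_1\colon[0,m_1]\to\R$ and $r_2\colon[0,m_2]\to\R$ be the profile functions of two isospectral $\Z_2$-symmetric metrics $g_1,g_2\in\mathcal{G}$.

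First I would translate the $\Z_2$-symmetry into a statement about the profile functions. The reflection $(x,y,z)\mapsto(x,y,-z)$ interchanges the two poles and fixes the equator setwise, so in terms of the arclength parameter $\sigma$ measured from the south pole it acts as $\sigma\mapsto m_i-\sigma$. Invariance of $g_i$ therefore amounts to $r_i(\sigma)=r_i(m_i-\sigma)$ for all $\sigma\in[0,m_i]$. Differentiating this relation at $\sigma=m_i/2$ gives $r_i'(m_i/2)=0$, so the midpoint is a critical point; since by definition of $\mathcal{G}$ the profile function has a unique critical point (the equator), this critical point is exactly $m_i/2$. Using $r_i'(0)=1>0$, it follows that $r_i$ is strictly increasing on $[0,m_i/2]$ and, by the symmetry, strictly decreasing on $[m_i/2,m_i]$.

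Next I would compute the distribution function explicitly. Writing $\phi_i\colon[0,m_i/2]\to[0,M_i]$ for the increasing branch $r_i|_{[0,m_i/2]}$, where $M_i:=r_i(m_i/2)$ is the maximal value, unimodality and symmetry give that the superlevel set is the symmetric interval
\[
\{\sigma\in[0,m_i]\mid r_i(\sigma)\ge\rho\}=\bigl[\phi_i^{-1}(\rho),\,m_i-\phi_i^{-1}(\rho)\bigr],\qquad 0\le\rho\le M_i,
\]
of length $m_i-2\phi_i^{-1}(\rho)$, while it is empty for $\rho>M_i$. Taking $\rho=0$ in Theorem~\ref{thmB} first yields $m_1=m_2=:m$. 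Since the length of the superlevel set is strictly positive for $\rho<M_i$ and vanishes for $\rho\ge M_i$, the common value of the distribution function forces $M_1=M_2=:M$. Finally, equating the two lengths for every $\rho\in[0,M]$ gives $\phi_1^{-1}=\phi_2^{-1}$ on $[0,M]$, hence $\phi_1=\phi_2$, i.e.\ $r_1=r_2$ on $[0,m/2]$; the symmetry relation then extends this to $r_1=r_2$ on all of $[0,m]$. As the profile function determines the metric, $g_1=g_2$.

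I do not expect any genuine obstacle here: the entire content is that equimeasurable symmetric unimodal functions coincide, which is immediate once the distribution function is recognized as a reflection of the inverse of the monotone branch. The only point requiring a little care is to confirm that $\Z_2$-symmetry really pins the equator to the midpoint, which is the role of the differentiation step above.
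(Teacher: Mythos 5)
Your proof is correct and follows essentially the same route as the paper, which deduces Corollary \ref{corC} directly from Theorem \ref{thmB} by observing that a profile function symmetric about the midpoint (equivalently, $\Z_2$-symmetry of the metric) is uniquely determined by the lengths of its superlevel sets $r^{-1}([\rho,+\infty))$. The extra details you supply -- pinning the equator to $\sigma=m/2$ via differentiation of $r(\sigma)=r(m-\sigma)$ and inverting the monotone branch to recover $r$ from the distribution function -- are exactly the steps the paper leaves implicit in calling the corollary an immediate consequence.
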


The latter assertion should be seen as a version for arbitrary metrics in $\GG$ of the well known fact that on $\R \mathrm{P}^2$ -- unlike $S^2$ -- there is a unique Zoll metric whose geodesics have prescribed length (see \cite{pri09} and references therein).

Denote by $\mathcal{G}_*$ the subset of $\mathcal{G}$ consisting of metrics whose curvature at the equator does not vanish to infinite order. All analytic metrics in $\mathcal{G}$ belong to $\mathcal{G}_*$. A second consequence of Theorem \ref{thmB} is the following metric flexibility result, which shows that the space of metrics in $\mathcal{G}_*$ that are isospectral to a given one is extremely large -- essentially as large as the space of Zoll metrics in $\mathcal{G}$ -- and carries a natural structure of an infinite-dimensional convex set.

\begin{maincorollary}
\label{corD}
For every smooth $($resp.\ analytic$)$ metric $g\in \mathcal{G}_*$ there exists a unique smooth $($resp.\ analytic$)$ $\Z_2$-symmetric metric $g_s\in \mathcal{G}_*$ which is isospectral to $g$. Moreover, if $r_s : [0, m] \rightarrow \R$ is the profile function of a smooth $($resp.\ analytic$)$ $\Z_2$-symmetric metric $g_s\in \mathcal{G}_*$, then the smooth $($resp.\ analytic$)$ metrics $g\in \mathcal{G}_*$ which are isospectral to $g_s$ are precisely those whose profile function $r$ is given by
\[
r := r_s \circ \varphi,
\]
where $\varphi : [0, m] \rightarrow [0, m]$ is the inverse of the diffeomorphism
\[
[0, m] \rightarrow [0, m], \qquad \tau \mapsto \tau + \psi(\tau),
\]
with $\psi : \R \rightarrow \R$ an arbitrary smooth $($resp.\ analytic$)$ odd function satisfying $\psi(m - \tau) = \psi(\tau)$ for every $\tau \in \R$, $\psi'(0) = 0$, and 
$|\psi'| < 1$.
\end{maincorollary}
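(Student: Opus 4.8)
The plan is to reduce everything to the superlevel-set-length function
\[
L_r(\rho) := \mathrm{length}\bigl(\{\sigma\in[0,m]\mid r(\sigma)\ge\rho\}\bigr),\qquad \rho\ge 0,
\]
of a profile function $r$, since by Theorem \ref{thmB} two metrics in $\mathcal{G}$ are isospectral exactly when their profiles have the same $L$. For a unimodal profile $r:[0,m]\to\R$ with maximum value $M$ attained at a single point $\sigma_*$, the superlevel set is an interval $\{r\ge\rho\}=[u(\rho),v(\rho)]$, where $u:=(r|_{[0,\sigma_*]})^{-1}$ and $v:=(r|_{[\sigma_*,m]})^{-1}$, so $L_r(\rho)=v(\rho)-u(\rho)$ and $L_r$ is a strictly decreasing bijection $[0,M]\to[0,m]$. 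Since $L_r(\rho)>0$ precisely for $\rho<M$, isospectral profiles share the same maximum $M$. The uniqueness of $g_s$ is then immediate: two $\Z_2$-symmetric metrics isospectral to $g$ are isospectral to each other, hence equal by Corollary \ref{corC}.

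For a $\Z_2$-symmetric profile $r_s$ one has $\sigma_*=m/2$ and $v_s=m-u_s$, so $\{r_s\ge\rho\}=[m/2-a,m/2+a]$ with $a=a(\rho):=\tfrac12 L_{r_s}(\rho)$. I would first prove sufficiency: given $\psi$ as in the statement, set $\Phi:=\id+\psi$, $\varphi:=\Phi^{-1}$ and $r:=r_s\circ\varphi$. Then $\{r\ge\rho\}=\varphi^{-1}(\{r_s\ge\rho\})=\Phi([m/2-a,m/2+a])$, whose length is
\[
2a+\psi(m/2+a)-\psi(m/2-a)=2a=L_{r_s}(\rho),
\]
the cancellation being exactly the symmetry $\psi(m-\tau)=\psi(\tau)$; by Theorem \ref{thmB}, $r$ is isospectral to $r_s$. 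It remains to check that $r$ is a genuine profile of a metric in $\mathcal{G}_*$. The condition $|\psi'|<1$ makes $\Phi$ an orientation-preserving diffeomorphism of $[0,m]$ with a single critical point of $r$ at $\Phi(m/2)$; oddness of $\psi$ makes $\varphi$ odd, so $r=r_s\circ\varphi$ inherits the odd $2m$-periodic extension; the normalizations $\psi(0)=0$, $\psi'(0)=0$ and $r_s'(0)=1$ give $\varphi'(0)=1$ and hence $r'(0)=1$, while the symmetry (which forces $\psi'(m)=-\psi'(0)=0$) gives $\varphi'(m)=1$ and $r'(m)=-1$. Finally $\varphi$ is a local diffeomorphism at the equator, so $r-M$ vanishes to the same finite order as $r_s-M$, placing $g$ in $\mathcal{G}_*$ (alternatively, invoke Proposition \ref{p:order}).

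For the converse, let $g$ with profile $r$ be isospectral to $g_s$. Define $\varphi:[0,m]\to[0,m]$ by $\varphi:=u_s\circ r$ on $[0,\sigma_*]$ and $\varphi:=v_s\circ r$ on $[\sigma_*,m]$; the two formulas agree at $\sigma_*$ (both equal $m/2$), $\varphi$ is an increasing bijection fixing the endpoints, and $r_s\circ\varphi=r$ by construction. Put $\Phi:=\varphi^{-1}$, $\psi:=\Phi-\id$. One computes $\Phi(\tau)=u(r_s(\tau))$ for $\tau\in[0,m/2]$ and $\Phi(m-\tau)=v(r_s(\tau))$ for such $\tau$, whence
\[
\psi(m-\tau)-\psi(\tau)=\bigl(v(r_s(\tau))-u(r_s(\tau))\bigr)-(m-2\tau)=L_r(r_s(\tau))-L_{r_s}(r_s(\tau)),
\]
which vanishes by isospectrality; thus $\psi(m-\tau)=\psi(\tau)$. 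Oddness of $\psi$ and $\psi'(0)=0$ follow from $\Phi$ being a composition of odd maps with $r'(0)=r_s'(0)=1$. Differentiating the symmetry gives $\psi'(m-\tau)=-\psi'(\tau)$, so the range of $\psi'$ is symmetric about $0$; combined with $\Phi'=1+\psi'>0$ this forces $-1<\psi'<1$, i.e.\ $|\psi'|<1$. Existence of $g_s$ is obtained by taking $r_s$ to be the symmetric rearrangement about $m/2$ with the same $L$, namely $r_s(m/2+s):=L_r^{-1}(2|s|)$, which is isospectral to $g$ by Theorem \ref{thmB} and $\Z_2$-symmetric by construction.

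I expect the main obstacle to be the regularity (smoothness, resp.\ analyticity) of $\varphi$, of $\Phi$, and of the rearranged profile $r_s$ across the equator. Away from $\sigma_*$ everything is a composition of diffeomorphisms and causes no trouble. At the equator, however, the branch inverses $u,v,u_s,v_s$ carry root-type singularities, and the smoothness of their compositions is exactly where the hypothesis $g\in\mathcal{G}_*$ is needed: since the curvature does not vanish to infinite order, near its maximum $r$ has the form $M-c\,(\sigma-\sigma_*)^{2k}(1+o(1))$ with finite $k\ge1$ and $c>0$, and the equal, finite order of vanishing at the two equators (Proposition \ref{p:order}) lets one factor out the singular part and verify that $\varphi$ and $r_s$ extend smoothly (resp.\ analytically) through $\sigma_*$. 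Making this factorization rigorous, presumably via a lemma on even smooth functions with an isolated finite-order critical point, is the technical heart of the argument.
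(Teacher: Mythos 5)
Your proposal is correct and follows essentially the same route as the paper: reduction to the superlevel-length function via Theorem \ref{thmB}, uniqueness from Corollary \ref{corC}, the symmetry computation characterizing $\psi(m-\tau)=\psi(\tau)$ (the paper's Lemma \ref{l:easy}), and the identical derivation of $|\psi'|<1$ from $\Phi'>0$ together with $\psi'(m-\tau)=-\psi'(\tau)$. The one step you defer is precisely the paper's Lemma \ref{l:rearrangements}, and it is proved there by the factorization you anticipate: writing $r_{\max}-r(\sigma)=(\sigma-\sigma_{\max})^{2k}f(\sigma)$ with $f>0$ smooth (resp.\ analytic) yields a local diffeomorphism $\nu$ with $r(\nu(u))=r_{\max}-u^{2k}$, whence $\varphi(\nu(u))=\frac{m+\nu(u)-\nu(-u)}{2}$ is manifestly smooth (resp.\ analytic) across the equator with $\varphi'(\sigma_{\max})=1$.
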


Using the parameter $\tau = \varphi(\sigma) \in [0, m]$ and expressing the $2m$-periodic even function $\psi'$ as a function of $\cos\left( \frac{\pi}{m} \cdot \right)$, we deduce that the metrics which are isospectral to the $\Z_2$-symmetric metric $g_s$ as above are precisely those of the form
\[
g = \left(1 + f\left(\cos \left({\textstyle \frac{\pi}{m}} \tau\right)\right)\right)^2 d\tau^2 + r_s(\tau)^2 d\theta^2,
\]
where $f : [-1,1] \to (-1,1)$ is an odd function vanishing at $1$. In the special case $m = \pi$ and $r_s(\sigma) = \sin \sigma$, we recover the classical formula for $S^1$-invariant Zoll metrics on $S^2$ with all geodesics of length $2\pi$ (see \cite[Corollary 4.16]{bes78}).
 
It would be possible to extend the above corollary to metrics in $\mathcal{G}$ whose curvature at the equator vanishes to infinite order, but the formulation would be less neat because if $g_s\in  \mathcal{G}\setminus \mathcal{G}_*$ is $\Z_2$-symmetric, then the metrics whose profile functions are described by Corollary \ref{corD} are still isospectral to $g_s$, but there might be more smooth metrics isospectral to $g_s$, as in this case the function $r$ defined above can be smooth also if $\varphi$ is not smooth. Moreover, for an arbitrary $g\in \mathcal{G}\setminus \mathcal{G}_*$ we do not know whether the unique $\Z_2$-symmetric metric $g_s$ which is isospectral to $g$ is smooth near the equator (see Remark \ref{r:smooth?}). 
 
\medskip 
 
It is interesting to compare the above results with the known spectral rigidity theorems for the Laplacian on $S^1$-symmetric Riemannian two-spheres. The flexibility statement of Corollary \ref{corD} should be contrasted with the following rigidity result of Zelditch \cite{zel98} for the class of analytic metrics in $\mathcal{G}$ of ``simple type'' (where the latter assumption includes the condition that for each positive number $L$, there exists at most one $S^1$-family of unoriented closed geodesics of length $L$, thereby excluding Zoll metrics): within this class, the eigenvalues of the Laplacian determine the metric up to isometry. Zelditch's rigidity result has been extended to a more general class of $S^1$-symmetric metrics on $S^2$ by Dryden, Macedo, and Sena-Dias \cite{dms16}, who showed that within this class the metric is determined by the spectrum of the Laplacian together with the weights that are induced by the $S^1$-action on each eigenspace. Under the same $\Z_2$-symmetry assumption of Corollary \ref{corC}, any $S^1$-symmetric metric on $S^2$ is determined by the eigenvalues of the Laplacian, as proven by Br\"uning and Heintze \cite{bh84}.

\medskip

\paragraph{\textsc{Proofs' ideas and organization of the paper.}} In Section \ref{s:general}, we review some basic facts about geodesic flows of $S^1$-symmetric Riemannian spheres and their description as $S^1$-equivariant Reeb flows. In Section \ref{s:birkhoff}, we recall that the geodesic flow of a metric in $\mathcal{G}$ has a global Birkhoff section given by the Birkhoff annulus, consisting of
the unit vectors which are based at the equator and point to one of the two hemispheres, and we study the properties of the corresponding first return time and first return map. In the subsequent Section \ref{s:genfun}, we show how the latter two maps can be conveniently expressed in terms of a function of one variable, which we refer to as generating function. The material of these three sections partly builds on \cite{abhs21}. 

In Section \ref{s:mls}, we discuss an equivalent dynamical description of the type of a closed geodesic and show that the existence of an $S^1$-equivariant conjugacy between two geodesic flows implies that the two metrics have the same marked length spectrum.

The first part of Theorem \ref{thmA} -- stating the existence of a smooth conjugacy between the geodesic flows of two isospectral metrics outside of the two orbits given by the equator  -- is proven in Section \ref{s:away}. The proof consists in showing that the marked length spectrum determines the generating function. This ultimately hinges on the fact that a smooth real function of one variable is uniquely determined by the set of tangent lines to its graph. In \cite{ric97}, Richardson uses tools from geometric measure theory to prove the latter statement for $C^{1,1}$ functions. In Appendix \ref{s:appA} we provide an elementary proof for $C^2$ functions.

Other approaches for the proof of the first part of Theorem \ref{thmA} are possible. For instance, once could work with the Hamiltonian formulation on $T^* S^2$ and use action angle coordinates on the open subset where the Hamiltonian induced by the metric and the first integral determined by the $S^1$-symmetry, which goes under the name of Clairaut's first integral, are independent, and then use a general statement about spectral reconstruction of integrable systems from Cieliebak's PhD thesis \cite{cie96} (after removing a genericity assumption from it). Alternatively, one could build on the fact that the cotangent disk bundle induced by a metric in $\mathcal{G}$ minus the fiber of one of the poles is symplectomorphic to a toric domain, see \cite{frv26}. The approach based on Birkhoff's annulus we are adopting here turns out to be useful also for addressing the other questions studied in this paper.

The smooth and analytic extension of the conjugacy at the two orbits corresponding to the equator, in the case in which the curvature along it is positive, is discussed in Section \ref{s:regular}. The proof here builds on a general classification result for a class of $S^1$-invariant contact forms on a three-manifold, which we discuss in Appendix \ref{s:appB} and whose proof uses normal forms and Moser's homotopy argument.

In Section \ref{s:continuous}, we prove that if the curvature of two isospectral metrics in $\mathcal{G}$ is non-negative near their equators, then the conjugacy between their geodesic flows extends continuously to the whole unit tangent bundle. This proof is based on an explicit computation of the longitude-shift and length along the geodesic flow, which leads to stability estimates over long time intervals of the orbits corresponding to the equator.

The same computations are used in Section \ref{s:abel} in order to express the first return time and first return map to the Birkhoff annulus in terms of the profile function of the metric. This expression involves the Abel transform, and the injectivity of this classical integral transformation leads to the proof of Theorem \ref{thmB}. A first consequence of this theorem is the fact that the order of vanishing of the curvature at the equator is determined by the marked length spectrum. This fact, together with the steps discussed above, concludes the proof of Theorem \ref{thmA}.

Corollary \ref{corC} follows immediately from Theorem \ref{thmB}. The proof of Corollary \ref{corD} is also elementary, but requires more work and is presented in Section \ref{s:isospectral_classes}.

In Section \ref{s:notsmooth}, we construct an example of one-parameter family $\{g_{\varepsilon}\}$ of isospectral analytic metrics in $\mathcal{G}$, emanating from a $\Z_2$-symmetric one $g=g_0$ with vanishing curvature at the equator, such that no $S^1$-equivariant orientation preserving conjugacy between the geodesic flow of $g_{\varepsilon}$ and the one of $g$ has a smooth extension to the orbits corresponding to the equator. The existence of this example is related to the following phenomenon in two-dimensional Hamiltonian dynamics: if two convex analytic Hamiltonians $H_1$ and $H_2$ on the plane have a strict minimum at the origin with $H_1(0)=H_2(0)=0$ and for every $c>0$ the Hamiltonian orbits given by the circles $H^{-1}_1(c)$ and $H^{-1}_2(c)$ have the same period, then the corresponding Hamiltonian flows on $\R^2\setminus \{0\}$ are analytically conjugate -- as it can be seen by using action angle coordinates -- but when the critical point at the origin is degenerate it is possible that no conjugacy extends smoothly to the origin. The classification up to smooth and analytic conjugacy of planar Hamiltonian systems with a singularity of the kind we find here is discussed in a recent paper of Martynchuk and V\~{u} Ng\d{o}c \cite{mvn}, and in the construction of our example we use some of their results.

In Section \ref{s:unstable}, we construct examples of smooth metrics in $\mathcal{G}$ whose curvature is somewhere negative near the equator -- this requires the curvature to vanish to infinite order at the equator -- for which the stability property of the equator which is proven in Section \ref{s:continuous} under the assumption of non-negative curvature near the equator fails. 

\medskip

\paragraph{\textsc{Acknowledgments}} The authors wish to thank Pietro Majer and Bernd Stratmann for useful conversations on symmetric rearrangements.

\section{Geodesic flows on $S^1$-symmetric spheres}
\label{s:general}

In this section, we fix some notation and recall some basic facts about geodesic flows on $S^2$ induced by an $S^1$-symmetric smooth (resp.\ analytic) metric $g$. Here, we identify $S^1$ with $\R/2\pi \Z$. We denote by $N:=(0,0,1)$ and $S:=(0,0,-1)$ the \textit{north} and \textit{south pole} of $S^2$, which are the fixed points of the $S^1$-action, and use geodesic polar coordinates $(\sigma,\theta)$ emanating from $S$, i.e., we identify $S^2 \setminus \{S,N\}$ with the cylinder $(0,m) \times S^1$ where $\sigma\in (0,m)$ denotes the distance
from $S$ measured with respect to the metric $g$ and $\theta\in S^1$ is the longitudinal angle on $S^2 \setminus \{S,N\}$, $m$ being the length of the meridian arc from $S$ to $N$. With respect to these coordinates, $g$ has the form
\begin{equation}
\label{e:form-g}
g = d\sigma^2 + r(\sigma)^2 \, d\theta^2,
\end{equation}
where $r:[0,m] \rightarrow [0,+\infty)$ is a function which is positive on $(0,m)$, vanishes at $0$ and $m$, and has a smooth (resp.\ analytic) extension to $\R$ as $2m$-periodic odd function with $r'(0)=1=-r'(m)$. We refer to $r$ as \textit{profile function} of the metric $g$.

The $S^1$-family of closed curves given by the intersection of $S^2$ with planes containing the $z$-axis consists of closed geodesics of length $2m$, which are called \textit{meridians}. The closed curves defined by fixing $\sigma$ in $(0,m)$ are called \textit{parallels} and are geodesics if, and only if, $\sigma$ is a critical point of $r$. These special parallels are called \textit{equators}. 

We denote by 
\[
T^1_g S^2 := \{ v\in TS^2\ |\ g(v,v) = 1 \}
\]
the unit tangent bundle of $S^2$ and by $\pi : T^1_g S^2 \rightarrow S^2$ the footpoint projection. We shall omit the subscript $g$ whenever the context makes clear the metric we are working with. The same observation applies to the other $g$ -dependent objects we are going to introduce.

The global coordinates $(\sigma,\theta)\in (0,m)\times S^1$ on $S^2\setminus \{S,N\}$ extend to global coordinates $(\sigma,\theta,\beta)\in (0,m)\times S^1 \times S^1$ on $\pi^{-1}(S^2 \setminus \{S,N\})$, where $\beta$ denotes the angle of a unit tangent vector with the parallel it is based at, oriented positively. In other words, $(\sigma,\theta,\beta)$ corresponds to the unit tangent vector
\begin{equation}
\label{e:glob_coord}
\frac{\cos \beta}{r(\sigma)}\, \partial_\theta+\sin \beta\, \partial_{\sigma} \in T^1 S^2
\end{equation}
based at $(\sigma,\theta)\in S^2$. We will refer to these coordinates  $(\sigma,\theta,\beta)$ on $T^1 S^2$ minus the fibers at the poles as to \textit{geodesic polar coordinates} on the unit tangent bundle of the sphere.

The Hilbert contact form $\alpha = \alpha_g$ on $T^1 S^2$, evaluated at $v=(\sigma,\theta,\beta)\in T^1 S^2$, is given by
\begin{align}
\label{e:contact_form}
\alpha_v (\cdot)  = g(v, d\pi_v (\cdot)) = \sin \beta\, d\sigma + r(\sigma)\cos \beta\, d\theta.
\end{align}
The corresponding volume form on $T^1 S^2$ is
\begin{align}
\label{e:volume}
\alpha \wedge d\alpha = r(\sigma)\, d\sigma \wedge d\theta \wedge d\beta.
\end{align}
The geodesic flow
\[
\phi^t = \phi^t_g : T^1 S^2 \rightarrow T^1 S^2, \qquad t\in \R,
\] 
is the flow of the Reeb vector field $R=R_g$ associated with the contact form $\alpha$ by means of  the equations $\imath_R d\alpha =0$ and $\alpha(R)=1$. In geodesic polar coordinates, $R$ can be written as
\begin{align}
\label{e:Reeb}
 R  =\sin \beta \, \partial_{\sigma} + \frac{\cos \beta}{r(\sigma)}\, \partial_\theta + \frac{r'(\sigma)\cos\beta}{r(\sigma)}\, \partial_\beta.
\end{align}

The differential of the $S^1$-action on $S^2$ defines an analytic $S^1$-action on $TS^2$, which leaves $T^1 S^2$ invariant and is readily seen to be free on $T^1 S^2$. 
The generator $V=V_g$ of this $S^1$-action on $T^1 S^2$ has the form $V=\partial_{\theta}$ outside of the fibers at the poles, and at the fibers of the poles is a non-vanishing vertical vector field (i.e., a vector field in the kernel of the differential of $\pi$). The corresponding quotient projection
\[
\Pi: T^1 S^2 \rightarrow W = W_g:= T^1 S^2/S^1
\]
is a smooth (resp.\ analytic) principal $S^1$-bundle. Changing the metric produces isomorphic $S^1$-bundles. In the case of the standard round metric $g_0$, we have the identification
\[
T^1_{g_0} S^2 \cong \mathrm{SO}(3), \qquad (x,v) \mapsto (x \; v \; x\times v), \qquad \forall x\in S^2, \; v\in T_x S^2,
\]
where $\times$ denotes the vector product in $\R^3$, and the $S^1$-action on $T^1 S^2$ corresponds to the action of $S^1 \cong \mathrm{SO}(2)$ -- seen as the subgroup of $\mathrm{SO}(3)$ consisting of the rotations fixing the $z$-axis -- by left multiplication. The quotient by this action can be identified to the 2-sphere by the map
\[
\mathrm{SO}(2) \backslash \mathrm{SO}(3) \cong S^2, \qquad [A] \mapsto A e_3,
\]
and hence the base $W$ of the principal $S^1$-bundle $\Pi$ is diffeomorphic to $S^2$. The interested reader may verify that  the Euler number of this $S^1$-bundle is 2.

The function $C = C_g:= \alpha(V)$ is known as \textit{Clairaut's first integral}. The fact that the contact form $\alpha$ is $S^1$-invariant implies that $C$ is a first integral of the geodesic flow, meaning that it is constant along every orbit $t\mapsto\phi^t(v)$. The function $C$ vanishes on the fibers at the poles, and its expression outside of them is
\begin{align}
\label{e:Clairaut}
C(\sigma,\theta,\beta)= r(\sigma) \cos\beta.
\end{align}
The function $C$ is invariant under the $S^1$-action and its critical points form the set
\[ 
\{ (\sigma,\theta,\beta) \in T^1 S^2 \mid r'(\sigma) = 0, \; \beta \in \{ 0,\pi\} \},
\]
consisting of two $S^1$-orbits for each equator of $g$. All the regular level sets $C^{-1}(\rho)$ consist of unions of two-dimensional tori which are invariant under the geodesic flow. The torus $C^{-1}(0)$ is given by all the orbits which project to meridians.

Denote by $P_{\sigma} \subset S^2$ the parallel corresponding to the value $\sigma\in (0,m)$. The invariance of Clairaut's first integral implies the following result (see \cite[Lemma 1.1]{abhs21} for the easy proof).

\begin{lemma}
\label{l:behaviour-geodesics}
Let $\gamma: \R \rightarrow S^2$ be a geodesic parametrized by unit speed which is not a meridian. Then exactly one of the following two alternative conditions hold:
\begin{enumerate}[$(i)$]
\item for $t \rightarrow -\infty$ and $t\rightarrow +\infty$ the geodesic $\gamma$ is asymptotic to two possibly coinciding equators $P_{\sigma_-}$ and $P_{\sigma_+}$ with $r(\sigma_-) = r(\sigma_+) = |C(\dot\gamma)|$;
\item there exist numbers $0 < \sigma_1 < \sigma_2 < m$ such that $r(\sigma_1) = r(\sigma_2) = |C( \dot\gamma)| < r(\sigma)$ for every $\sigma \in (\sigma_1, \sigma_2)$, $r'(\sigma_1) > 0$, $r'(\sigma_2) < 0$, $\gamma$ is confined to the strip $\bigcup_{\sigma \in[\sigma_1,\sigma_2]} P_{
\sigma}$ and alternately touches both parallels $P_{\sigma_1}$ and $P_{\sigma_2}$ tangentially infinitely many times. 
\end{enumerate}
\end{lemma}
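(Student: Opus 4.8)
The plan is to reduce the geodesic equation to a single autonomous first-order ODE for the meridional coordinate $\sigma$ and then run a standard turning-point analysis of this one-dimensional conservative system. First I would note that, since $\gamma$ is not a meridian, its Clairaut integral $c:=C(\dot\gamma)$ is nonzero: by \eqref{e:Clairaut}, $C=r\cos\beta$ vanishes exactly where $\cos\beta=0$, i.e.\ precisely along the meridians. Set $\rho:=|c|>0$. Writing $\gamma$ in geodesic polar coordinates $(\sigma,\theta,\beta)$ and combining the first component $\dot\sigma=\sin\beta$ of the Reeb field \eqref{e:Reeb} with $\cos\beta=c/r(\sigma)$ from \eqref{e:Clairaut}, I obtain the reduced equation
\[
\dot\sigma^2 = \sin^2\beta = 1 - \frac{\rho^2}{r(\sigma)^2}
\]
(equivalently, this is the unit-speed relation $\dot\sigma^2+r^2\dot\theta^2=1$ together with $r^2\dot\theta=c$). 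In particular $r(\sigma(t))\ge\rho$ for all $t$, so $\sigma(t)$ is confined to the set $K:=\{\sigma\in(0,m):r(\sigma)\ge\rho\}$, which is compact because $r(0)=r(m)=0<\rho$.

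Next I would isolate the connected component $[\sigma_1,\sigma_2]$ of $K$ containing $\sigma(0)$, where $0<\sigma_1\le\sigma_2<m$, $r(\sigma_1)=r(\sigma_2)=\rho$, $r>\rho$ on $(\sigma_1,\sigma_2)$, and hence $r'(\sigma_1)\ge 0\ge r'(\sigma_2)$. On the open interval the right-hand side of the reduced equation is strictly positive, so $\dot\sigma$ never vanishes there and $\sigma(t)$ is strictly monotone between consecutive visits to the endpoints. The whole dichotomy then comes down to whether a boundary turning point $\sigma_*\in\{\sigma_1,\sigma_2\}$ is reached in finite or infinite time, which is measured by the convergence of
\[
\int \frac{d\sigma}{\sqrt{1-\rho^2/r(\sigma)^2}} = \int \frac{r(\sigma)\,d\sigma}{\sqrt{r(\sigma)^2-\rho^2}}
\]
near $\sigma_*$. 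If $r'(\sigma_*)\ne 0$, then $r(\sigma)^2-\rho^2$ vanishes to first order at $\sigma_*$, the integrand is $O(|\sigma-\sigma_*|^{-1/2})$, the integral converges, and $\gamma$ reaches $P_{\sigma_*}$ in finite time and reflects off it tangentially. If instead $r'(\sigma_*)=0$ (so $P_{\sigma_*}$ is an equator with $r(\sigma_*)=\rho$), a second-order Taylor estimate gives $r(\sigma)^2-\rho^2=O((\sigma-\sigma_*)^2)$, the integrand is $\gtrsim|\sigma-\sigma_*|^{-1}$, the integral diverges, and $P_{\sigma_*}$ is approached only asymptotically.

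The two alternatives now follow by combining this with monotonicity. If $r'(\sigma_1)>0>r'(\sigma_2)$ (which forces $\sigma_1<\sigma_2$), both turning points are reached in finite time, so $\sigma(t)$ oscillates periodically between $\sigma_1$ and $\sigma_2$; since $\dot\theta=c/r^2\ne 0$ the geodesic keeps winding, and it touches $P_{\sigma_1}$ and $P_{\sigma_2}$ tangentially, alternately and infinitely often, which is case (ii). Otherwise at least one endpoint derivative vanishes, i.e.\ at least one endpoint is an equator, and the infinite-time analysis yields case (i): when both endpoints are equators, $\gamma$ runs monotonically and is asymptotic to the distinct equators $P_{\sigma_1}$ and $P_{\sigma_2}$ as $t\to\mp\infty$; when exactly one endpoint, say $\sigma_1$, is an equator, $\gamma$ touches $P_{\sigma_2}$ once in finite time and, by the time-reversibility of the geodesic flow, is asymptotic to the single equator $P_{\sigma_1}$ as $t\to\pm\infty$ (coinciding equators); and when $\sigma_1=\sigma_2$, $\gamma$ is the equator $P_{\sigma_1}$ itself. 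The alternatives are mutually exclusive because (ii) requires strict nondegeneracy of both endpoints and finite-time oscillation, whereas (i) requires a vanishing endpoint derivative and infinite-time asymptotics.

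The main obstacle — indeed the only nontrivial point — is the finite-versus-infinite-time dichotomy at the turning points: one must verify the convergence of the time integral when $r'(\sigma_*)\ne 0$ and its divergence when $r'(\sigma_*)=0$, for which the $C^2$ regularity of $r$ and the elementary Taylor bounds above suffice. Everything else is bookkeeping about the component $[\sigma_1,\sigma_2]$ and the monotonicity of $\sigma(t)$.
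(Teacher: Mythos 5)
Your proof is correct and follows essentially the same route as the paper's source: the paper itself gives no proof of Lemma \ref{l:behaviour-geodesics}, deferring to Lemma 1.1 of \cite{abhs21}, whose argument is precisely this Clairaut reduction to $\dot\sigma^2 = 1 - C^2/r(\sigma)^2$ on a connected component of $\{r \geq |C|\}$, with the finite-versus-infinite-time dichotomy at a turning point $\sigma_*$ decided by whether $r'(\sigma_*)$ vanishes. The only point worth tightening is the degenerate sub-subcase where $\sigma(t_0)$ hits an endpoint with $r'(\sigma_*)=0$: there the geodesic is the equator itself by uniqueness (case (i) with coinciding equators), an observation you make for $\sigma_1=\sigma_2$ but which is also needed when $\sigma_1<\sigma_2$.
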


\section{The Birkhoff annulus}
\label{s:birkhoff}

We now assume that the smooth (resp.\ analytic) $S^1$-invariant metric $g$ on $S^2$ belongs to $\mathcal{G}$, meaning that it has just one equator. Equivalently, the profile function $r: [0,m] \rightarrow [0,+\infty)$ has positive derivative on $[0,\sigma_{\max})$, achieves its maximum $r_{\max}$ at $\sigma_{\max}\in (0,m)$, and has negative derivative on $(\sigma_{\max},m]$. 

The \textit{Birkhoff annulus} is the set $\Sigma=\Sigma_g$ of vectors in $T^1 S^2$ which are based at the equator and point to the northern hemisphere. In geodesic polar coordinates, it has the form
\[
\Sigma = \{ (\sigma,\theta,\beta) \mid \sigma=\sigma_{\max}, \; \theta\in S^1, \; \beta\in (0,\pi)\}.
\]
Since $\sin \beta>0$ on $\Sigma$, \eqref{e:Reeb} implies that $\Sigma$ is transverse to the geodesic flow. For every $v=(\sigma_{\max},\theta,\beta) \in \Sigma$ there exists $t>0$ such that $\phi^t(v)\in \Sigma$. Indeed, this is clear if $\beta = \frac{\pi}{2}$, because in this case the geodesic $\gamma$ corresponding to the orbit $t\mapsto \phi^t(v)$ is a meridian, and the claim holds with $t=2m$. If $\beta\in (0,\pi) \setminus \{ \frac{\pi}{2} \}$, then the geodesic $\gamma$ is not a meridian and satisfies 
\[
0< |C(\dot\gamma)| =r(\sigma_{\max}) |\cos \beta| <  r_{\max},
\]
so alternative (ii) must hold in Lemma \ref{l:behaviour-geodesics} because there are no equators $P_{\sigma}$ with $r(\sigma)=|C(\dot\gamma)|$. Statement (ii) tells us that $\gamma$ oscillates in the strip $\{\sigma_1 \leq \sigma \leq \sigma_2\}$ where $0 < \sigma_1 < \sigma_{\max} < \sigma_2 < m$ 
are such that $r(\sigma_1)=r(\sigma_2) = |C(\dot\gamma)|$, so the forward orbit of $v$ meets $\Sigma$ again.

A similar argument, still involving Lemma \ref{l:behaviour-geodesics}, shows that every orbit of the geodesic flow other than the two orbits defined by the equator and building the set $\Gamma=\Gamma_g = \partial \Sigma$ meets $\Sigma$. 

In particular, there exists a first return time $\tau: \Sigma \rightarrow (0,+\infty)$ defined by
\[
\tau(v) := \min \{ t>0 \mid \phi^t(v) \in\Sigma \},
\]
which is a smooth (resp.\ analytic) function. We denote by $\varphi:  \Sigma \rightarrow \Sigma$ the first return map, i.e., the smooth (resp.\ analytic) diffeomorphism
\[
\varphi(v) := \phi^{\tau(v)}(v).
\]
Denote by $\lambda$ the restriction of the contact form $\alpha$ to $\Sigma$. The fact that $\tau$ and $\varphi$ are the first return time and map of a Reeb flow implies the following identity:
\begin{equation}
\label{e:liouville}
d\tau = \varphi^* \lambda - \lambda. 
\end{equation}
Indeed, for $v\in \Sigma$ and $Y\in T_v \Sigma$ we compute
\[
\begin{split}
(\varphi^* \lambda)_v (Y) &= \lambda_{\varphi(v)} \bigl( d\varphi_v (Y) \bigr) = \alpha_{\varphi(v)} \bigl( d\phi^{\tau(v)}_v (Y) + d\tau_v(Y) R(v) \bigr) \\ &= \alpha_v(Y) + d\tau_v (Y) = \lambda_v (Y) + d\tau_v (Y),
\end{split}
\]
where we have used the fact that the Reeb flow of $\alpha$ preserves $\alpha$.

The fact that the transverse surface of section $\Sigma$ has globally defined first return map $\varphi$ and first return time $\tau$, together with the fact that the evolution of $\Sigma$ by the flow spans the whole $T^1 S^2 \setminus \Gamma$, implies that the restriction of the geodesic flow to this invariant set  is smoothly (resp.\ analytically) conjugate to the suspension flow on the mapping torus of $(\varphi,\tau)$. Indeed, recall that the mapping torus $M(\varphi,\tau)$ is the smooth (resp.\ analytic) manifold given by the quotient of $\Sigma \times \R$ by the free  $\Z$-action generated by the smooth (resp.\ analytic) diffeomorphism
\[
(v,s) \mapsto (\varphi(v),s-\tau(v)),
\]
and that the suspension flow $\psi^t$ on $M(\varphi,\tau)$ is defined as
\[
\psi^t ([(v,s)]) = [(v,s+t)].
\]
Then the map 
\[
\mu: M(\varphi,\tau) \rightarrow T^1 S^2 \setminus \Gamma, \qquad [(v,s)] \mapsto \phi^s(v),
\]
is well defined and is readily seen to be a smooth (resp.\ analytic) conjugacy from the suspension flow $\psi^t$ to the geodesic flow $\phi^t$. Note that the $S^1$-action on $\Sigma$ induces an $S^1$-action on $M(\varphi,\tau)$ and $\mu$ is $S^1$-equivariant. Moreover, the pull back of the contact form $\alpha$ by $\mu$ is given by
\begin{equation}
\label{e:cfmt}
\mu^* \alpha = \lambda + ds.
\end{equation} 

We conclude this section by discussing the behaviour of $\varphi$ and $\tau$ near the boundary $\Gamma$ of $\Sigma$.  Denote by $T=T_g$ the 2-torus consisting of the vectors $v\in T^1 S^2$ which are based at the equator. Note that $T$ is the union of the pairwise disjoint sets $\Sigma$, $\widehat{\Sigma}$, and $\Gamma = \partial \Sigma = \partial \widehat{\Sigma}$, where $\widehat{\Sigma} = \widehat{\Sigma}_g$ is the other Birkhoff annulus of the equator, consisting of vectors in $T^1 S^2$ which are based at the equator and point to the southern hemisphere, i.e.,
\[
\widehat{\Sigma} = \{ (\sigma,\theta,\beta) \mid \sigma=\sigma_{\max}, \; \theta\in S^1, \; \beta\in (-\pi,0)\}.
\]
By considering the first return map and time to $\widehat{\Sigma}$, we can extend $\varphi$ and $\tau$ to smooth (resp.\ analytic) maps 
\[
\varphi: T \setminus \Gamma \rightarrow T \setminus \Gamma, \qquad \tau: T \setminus \Gamma \rightarrow (0,+\infty),
\]
which are actually the second return map and time to $T \setminus \Gamma$. The behaviour of $\varphi$ and $\tau$ near $\Gamma$ depends on whether the curvature of $g$ along the equator is positive or zero, as explained in the next lemma.

\begin{lemma}
\label{l:extension}
If the curvature of $g$ along the equator is positive, then $\tau$ and $\varphi$ extend smoothly $($resp.\ analytically$)$ to $T$. These extensions are defined as follows: if $v\in \Gamma$ then $\tau(v)$ is the second conjugate instant to $\pi(v)$ along the equator and $\varphi(v)=\phi^{\tau(v)}(v)$. If the curvature along the equator vanishes, then $\tau$ diverges to $+\infty$ at $\Gamma$.
\end{lemma}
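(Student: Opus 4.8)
The plan is to reduce $\tau$ and $\varphi$ near $\Gamma$ to one-dimensional period integrals governed by Clairaut's first integral \eqref{e:Clairaut}, and then to analyze these integrals as the orbits collapse onto the equator. Along a unit-speed geodesic with Clairaut constant $C$, the Reeb field \eqref{e:Reeb} gives $\dot\sigma=\sin\beta$ and $\cos\beta=C/r(\sigma)$, so that $\dot\sigma^2=1-C^2/r(\sigma)^2$; the motion in $\sigma$ is that of a one-dimensional system with turning parallels $\sigma_1<\sigma_{\max}<\sigma_2$ determined by $r(\sigma_1)=r(\sigma_2)=|C|$. Since $C$ is preserved and equals $r_{\max}\cos\beta$ on the equator, the first return to $\Sigma$ leaves $\beta$ unchanged and only shifts the longitude; by $S^1$-invariance $\varphi(\sigma_{\max},\theta,\beta)=(\sigma_{\max},\theta+\Theta(\beta),\beta)$, where
\[
\tau=2\int_{\sigma_1}^{\sigma_2}\frac{d\sigma}{\sqrt{1-C^2/r(\sigma)^2}},\qquad \Theta=2\int_{\sigma_1}^{\sigma_2}\frac{C\,d\sigma}{r(\sigma)^2\sqrt{1-C^2/r(\sigma)^2}},
\]
with $C=r_{\max}\cos\beta$. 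Both quantities depend on $\beta$ only through $C$, and extending $\tau$ and $\varphi$ smoothly (resp.\ analytically) to all of $T$ amounts to extending these improper integrals to the limit $|C|\to r_{\max}$, that is $\beta\to 0$ and, symmetrically, $\beta\to\pi$.

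In the positive-curvature case I would desingularize these integrals by a one-variable Morse substitution. The Gaussian curvature along the equator equals $K=-r''(\sigma_{\max})/r_{\max}$, so positivity is equivalent to the function $h:=1-r^2/r_{\max}^2$ having a nondegenerate minimum at $\sigma_{\max}$, with $h(\sigma_{\max})=0$ and $h''(\sigma_{\max})=2K>0$. The elementary one-variable Morse lemma (analytic if $r$ is analytic) then gives a smooth (resp.\ analytic) change of variable $\sigma\mapsto s$ near $\sigma_{\max}$ with $h=s^2$ and $ds/d\sigma|_{\sigma_{\max}}=\sqrt{K}$. Since the turning points are where $h=\sin^2\beta$, i.e.\ $s=\pm\sin\beta$, this substitution followed by $s=\sin\beta\,\sin u$ converts the integrals into
\[
\tau=2\int_{-\pi/2}^{\pi/2}G(\sin\beta\,\sin u)\,du,\qquad \Theta=2\cos\beta\int_{-\pi/2}^{\pi/2}\widetilde G(\sin\beta\,\sin u)\,du,
\]
where $G,\widetilde G$ are smooth (resp.\ analytic) with $G(0)=1/\sqrt{K}$ and $\widetilde G(0)=1/(r_{\max}\sqrt{K})$. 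These expressions are manifestly smooth (resp.\ analytic) and even in $\sin\beta$, hence smooth (resp.\ analytic) functions of $\beta$ that extend across $\beta\in\{0,\pi\}$ and agree there with the analogous expressions coming from $\widehat\Sigma$. Evaluating at $\beta=0$ yields $\tau|_\Gamma=2\pi/\sqrt{K}$ and $\Theta|_\Gamma=2\pi/(r_{\max}\sqrt{K})=\tau|_\Gamma/r_{\max}$.

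To match the stated description on $\Gamma$, I would compare with conjugate points: the Jacobi equation along the equator is $\ddot J+KJ=0$ with $K$ constant, so the normal Jacobi field vanishing at the basepoint next vanishes at $\pi/\sqrt{K}$ and $2\pi/\sqrt{K}$, making $2\pi/\sqrt{K}=\tau|_\Gamma$ precisely the second conjugate instant. Moreover, flowing a vector of $\Gamma$ for time $\tau|_\Gamma$ keeps the orbit on the equator, hence $\beta=0$, and advances $\theta$ by $\tau|_\Gamma/r_{\max}=\Theta|_\Gamma$; thus the extended $\varphi$ satisfies $\varphi(v)=\phi^{\tau(v)}(v)$ for $v\in\Gamma$. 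Since $\tau$ depends on $\beta$ alone and $\varphi$ is the longitude shift $(\theta,\beta)\mapsto(\theta+\Theta(\beta),\beta)$ on $T$, the smooth (resp.\ analytic) extension in $\beta$ immediately promotes to an extension of both maps to the whole torus $T$.

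In the zero-curvature case the minimum of $h$ is degenerate, $h(\sigma_{\max})=h'(\sigma_{\max})=h''(\sigma_{\max})=0$, whence $h(\sigma)=o\big((\sigma-\sigma_{\max})^2\big)$. Writing $\delta:=r_{\max}^2\sin^2\beta$ and $\tau=2\int_{\sigma_1}^{\sigma_2}r\,d\sigma/\sqrt{\delta-r_{\max}^2h(\sigma)}$, I would establish divergence by a direct lower bound: given $\varepsilon>0$, choose a neighborhood of $\sigma_{\max}$ on which $0\le h(\sigma)\le\varepsilon(\sigma-\sigma_{\max})^2$ and $r\ge r_{\max}/2$. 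For $\beta$ small the turning points lie in this neighborhood and satisfy $\sigma_2-\sigma_{\max}\ge\sin\beta/\sqrt{\varepsilon}$; restricting the integral to $[\sigma_{\max},\sigma_{\max}+\sin\beta/\sqrt{\varepsilon}]$, where $r_{\max}^2h\le\delta$ and hence $\sqrt{\delta-r_{\max}^2h}\le\sqrt{\delta}=r_{\max}\sin\beta$, gives $\tau\ge 1/\sqrt{\varepsilon}$. As $\varepsilon>0$ is arbitrary, this forces $\tau\to+\infty$ as $\beta\to 0$, and symmetrically as $\beta\to\pi$. I expect the main obstacle to be the positive-curvature desingularization: one must check that the Morse substitution is valid on the whole shrinking interval $[\sigma_1,\sigma_2]$, that the integrands $G,\widetilde G$ are genuinely smooth (resp.\ analytic) up to $\beta\in\{0,\pi\}$, and that the boundary values computed from $\Sigma$ and from $\widehat\Sigma$ agree and match the second conjugate instant.
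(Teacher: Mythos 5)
Your proof is correct, and it takes a genuinely different route from the paper's. The paper proves the positive-curvature case softly, without any formula for $\tau$: it considers the variation $\gamma_\beta$ of the equator, observes that the quotient $u(\beta,t)=(\sigma_\beta(t)-\sigma_{\max})/\beta$ (extended at $\beta=0$ by the normal Jacobi field $j$) is smooth (resp.\ analytic), and applies the implicit function theorem to $u(\beta,\rho(\beta))=0$ at the second conjugate instant $t_0$, where $j'(t_0)\neq 0$; the boundary value of $\tau$ is the second conjugate instant by construction rather than by computation, and $S^1$-invariance then gives the extension on all of $T$. You instead derive the Clairaut period integrals for $\tau$ and $\Theta$ -- these are exactly the formulas the paper itself establishes later in Lemma \ref{l:calcoli} and Proposition \ref{p:abel} -- and desingularize them with the one-variable Morse lemma for $h=1-r^2/r_{\max}^2$ (note $h''(\sigma_{\max})=2K$, as you say) followed by $s=\sin\beta\sin u$, very much in the spirit of the Darboux--Morse computation of the period function in Lemma \ref{l:extT} of Appendix \ref{s:appB}. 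Your route buys explicit boundary values, $\tau|_\Gamma=2\pi/\sqrt{K}$ and $\Theta|_\Gamma=\pm 2\pi/(r_{\max}\sqrt K)$, which you correctly reconcile with the second conjugate instant via the constant-coefficient Jacobi equation $\ddot J+KJ=0$ along the equator, as well as a quantitative lower bound $\tau\geq 1/\sqrt{\varepsilon}$ in the flat case, where the paper's divergence claim is argued more briefly through the absence of conjugate points. The caveats you flag at the end are the only delicate points and they all go through: for $\beta$ near $0$ or $\pi$ the turning interval $[\sigma_1,\sigma_2]$ shrinks to $\sigma_{\max}$, hence lies in the fixed Morse chart; $G(s)=r(\sigma(s))\sigma'(s)/r_{\max}$ and $\widetilde G(s)=\sigma'(s)/r(\sigma(s))$ are smooth (resp.\ analytic) on a fixed neighborhood of $s=0$, so $\Phi(c):=2\int_{-\pi/2}^{\pi/2}G(c\sin u)\,du$ is smooth (resp.\ analytic) near $c=0$ and composition with the (analytic) function $\sin\beta$ yields regularity in $\beta$ across $\{0,\pi\}$; and since the second return time and map from $\widehat{\Sigma}$ are given by the same full-oscillation integrals, depending only on $\sin^2\beta$, the evenness of $\Phi$ and $\widetilde\Phi$ is precisely what makes the single expressions $\tau=\Phi(\sin\beta)$, $\Theta=\cos\beta\,\widetilde\Phi(\sin\beta)$ represent both annuli and glue across both components of $\Gamma$ (this matching step deserves the one explicit sentence you give it, but it is correct).
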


The above result is well known (see e.g.\ \cite[Lemma 4.3]{sch15} or \cite[Proposition 3.2]{abhs17} for statements of this kind) but we include a proof for the sake of completeness.

\begin{proof}
Assume that the curvature of $g$ along the equator is positive or, equivalently, that the equator has conjugate points. Let $\dot\gamma(0)\in \Gamma$, where $\gamma$ is a parametrization by arc-length of the equator. Given $\beta\in (-\frac{\pi}{2},\frac{\pi}{2})$, we denote by $\gamma_{\beta}: \R \rightarrow S^2$ the geodesic such that $\gamma_{\beta}(0)=\gamma(0)$ and $\dot{\gamma}_{\beta}(0)$ makes an angle $\beta$ with $\dot\gamma(0)$. The vector field 
\[
J(t):= \partial_{\beta}|_{\beta=0} \gamma_{\beta}(t)
\]
is a non-trivial Jacobi field orthogonal to $\dot\gamma(t)$ and hence of the form
\[
J(t) = j(t) \partial_{\sigma}
\]
for some smooth (resp.\ analytic) function $j: \R \rightarrow \R$ vanishing at $0$. Denote by $t_0$ the second positive zero of $j$, i.e., the second conjugate instant to $\gamma(0)$ along $\gamma$.

For $\beta\neq 0$, $\dot{\gamma}_{\beta}(0)$ belongs to $T\setminus \Gamma$ and $\rho(\beta):= \tau(\dot\gamma_{\beta}(0))$ is the second positive instant $t$ for which $\gamma_{\beta}(t)$ lies on the equator. Therefore, $\rho(\beta)$ tends to $t_0$ for $\beta\rightarrow 0$, and we can extend $\rho$ continuously in $0$ by setting $\rho(0):= t_0$. We wish to show that this extension of $\rho$ is smooth (resp.\ analytic) in a neighborhood of $0$.

In the usual coordinate system $(\sigma,\theta)$ on the sphere minus the poles, we can write
\[
\gamma_{\beta}(t) = (\sigma_{\beta}(t),\theta_{\beta}(t)),
\]
so that
\[
j(t) = \partial_{\beta}|_{\beta=0} \sigma_{\beta}(t).
\]
Since the function $(\beta,t) \mapsto \sigma_{\beta}(t) - \sigma_{\max}$ is smooth (resp.\ analytic) and vanishes for $\beta=0$, the function
\[
u(\beta,t) :=
\begin{cases}
\frac{\sigma_{\beta}(t) - \sigma_{\max}}{\beta}, & \mbox{if } \beta\neq 0 \\ 
j(t), & \mbox{if } \beta= 0 
\end{cases}
\]
is smooth (resp.\ analytic) on $(-\frac{\pi}{2},\frac{\pi}{2}) \times \R$. By construction,
\[
u(\beta,\rho(\beta)) = 0
\]
for every $\beta$ in $(-\frac{\pi}{2},\frac{\pi}{2})$. Since
\[
\partial_t u (0,\rho(0)) =  \partial_t u (0,t_0) = j'(t_0) \neq 0,
\]
because the Jacobi vector field $J$ is non-trivial and vanishes at $t_0$, the implicit function theorem implies that $\rho$ is smooth (resp.\ analytic) near $0$.
Taking also its $S^1$-invariance into account, we conclude that $\tau$ has a smooth (resp.\ analytic) extension to $T$. Therefore, the map $v\mapsto \phi^{\tau(v)}(v)$ is smooth (resp.\ analytic) on $T$ as well.

When the curvature at the equator vanishes, the equator has no conjugate points and the function $\rho(\beta)$ diverges to $+\infty$ for $\beta\rightarrow 0$, and so does $\tau$ at $\Gamma$.
\end{proof} 

\section{The generating function}
\label{s:genfun}

In this section, we show how the first return time $\tau$ and the first return map $\varphi$ to the Birkhoff annulus $\Sigma$ can be expressed in terms of a function of one variable. We follow \cite{abhs21}, but adopting different normalization conventions.

It is convenient to use global coordinates $(\theta,\eta)\in S^1 \times (-1,1)$ on $\Sigma$, by setting $\eta= \cos \beta$. In other words, we identify the pair $(\theta,\eta)\in S^1 \times (-1,1)$ with the vector
\begin{equation}
\label{e:cosi}
\frac{\eta}{r_{\max}} \partial_{\theta} + \sqrt{1-\eta^2} \partial_{\sigma} \in \Sigma,
\end{equation}
see \eqref{e:glob_coord}. By \eqref{e:contact_form}, the restriction $\lambda$ of the contact form $\alpha$ to $\Sigma$ has the form
\[
\lambda = r_{\max} \, \eta\, d\theta,
\]
and \eqref{e:liouville} implies that $\varphi$ preserves the area form $\frac{1}{r_{\max}} d\lambda = d\eta \wedge d\theta$.

By the $S^1$-symmetry, $\tau(\theta,\eta)=\tau(\eta)$ is independent of the angular variable $\theta$, and $\varphi(\theta_1+\theta_2,\eta)=\varphi(\theta_1,\eta)+(\theta_2,0)$. The invariance of the Clairaut first integral implies that the second component of $\varphi(\theta,\eta)$ is $\eta$, and we conclude that
$\varphi$ has the form 
\begin{align}
\label{e:return_map}
\varphi(\theta,\eta)=(\theta+f(\eta),\eta),
\end{align}
for some smooth (resp.\ analytic) function $f:(-1,1)\to\R$. Here, $f$ is defined up to the sum of an integer multiple of $2\pi$. The fact that $\varphi$ fixes the vectors which are tangent to meridians implies that $f(0)\in 2\pi \Z$, and we make $f$ unique by the normalization
\begin{equation}
\label{e:norm}
f(0)=2\pi.
\end{equation}
In order to discuss the geometric meaning of this normalization, we denote by $ \Theta(\theta,\eta) = \Theta(\eta)$ the total angular variation along the geodesic arc emanating from a vector in $\Sigma$ up to the next encounter with $\Sigma$. More precisely, given a vector $v\in \Sigma$ with coordinates $(\theta_0,\eta_0)$ with $\eta_0\in (-1,1) \setminus \{0\}$, we write in geodesic polar coordinates
\[
\phi^t(v) = (\sigma(t),\theta(t),\beta(t)) \qquad \forall t\in \R,
\]
where $\sigma(0)=\sigma_{\max}$, $\theta(0)=\theta_0$, and $\eta_0 = \cos \beta(0)$, and define
\begin{equation}
\label{e:angle_var}
\Theta(\eta_0) := \theta(\tau(\theta_0,\eta_0)) - \theta_0.
\end{equation}
This defines a real function $\Theta$ on $(-1,1)\setminus \{0\}$, and the normalization \eqref{e:norm} implies
\begin{equation}
\label{e:theta_shift}
f(\eta) = \Theta(\eta) \qquad \forall \eta\in (0,1).
\end{equation}
Indeed, by \eqref{e:return_map} the left and right hand-side of the above identity differ by an integer multiple of $2\pi$, so the above identity follows from the fact that $\Theta(\eta)$ converges to $2\pi$ for $\eta\downarrow 0$ (see the proof of \cite[Lemma 4.1]{abhs21}). Note that on the interval $(-1,0)$, we have instead $f-\Theta=4\pi$, because $\Theta(\eta)$ converges to $-2\pi$ for $\eta\uparrow 0$.

The form \eqref{e:form-g} of the metric implies that $g$ is symmetric with respect to reflections with respect to planes containing the $z$-axis, i.e., with respect to involutions of the form $(\sigma,\theta) \mapsto (\sigma,\theta_0-\theta)$. Therefore, the restriction of $\varphi$ to the $\varphi$-invariant half of the Birkhoff annulus consisting of points $(\theta,\eta)$ with $\eta\in [0,1)$ determines its restriction to the other half - the one where $\eta\in (-1,0]$. Said otherwise, the restriction of $f$ to $[0,1)$ determines it on the whole $(-1,1)$.
Indeed, our choice of normalization \eqref{e:norm} implies that the function $\eta \mapsto f(\eta) - 2\pi$ is  odd.

By the forms of $\varphi$, $\tau$, and $\lambda$, equation \eqref{e:liouville} can be rewritten as
\[
\tau'(\eta) \, d\eta = r_{\max} \bigl( \eta \, d (\theta + f(\eta)) - \eta\, d\theta \bigr) = r_{\max}\, \eta f'(\eta) \, d\eta \qquad \forall \eta\in (-1,1),
\]
that is,
\[
\tau'(\eta) = r_{\max}\,  \eta f'(\eta) \qquad \forall \eta\in (-1,1).
\]
Denoting by $F:(-1,1) \rightarrow \R$ a primitive of $f$, we have
\[
\frac{d}{d\eta} \bigl( \eta F'(\eta) - F(\eta) \bigr) = \eta F''(\eta) = \eta f'(\eta),
\]
and hence the function $\eta\mapsto r_{\max} (\eta F'(\eta) - F(\eta))$ differs from $\tau$ by a constant. By choosing the primitive $F$ of $f$ appropriately, we can ensure that
\[
\tau(\eta) = r_{\max} \bigl( \eta F'(\eta) - F(\eta) \bigr) \qquad \forall \eta \in (-1,1).
\]
Indeed, since the value of $\tau$ at vectors in $\Sigma$ which are tangent to meridians is $2m$, the above identity holds provided that $F$ is the primitive of $f$ such that $F(0)=-\frac{2m}{r_{\max}}$.

Finally, we discuss the behaviour of $F$ near $1$ and $-1$. If the curvature of $g$ along the equator vanishes, then Lemma \ref{l:extension} tells us that $\tau(v)$ diverges to $+\infty$ at $\Gamma=\partial \Sigma$, and we deduce that for $\eta\rightarrow \pm 1$ we have $\Theta(\eta) \rightarrow \pm \infty$ and hence $F'(\eta)=f(\eta)\rightarrow \pm \infty$.

If the curvature along the equator is positive, the same lemma gives us a smooth (resp.\ analytic) extension of $\varphi$ to the torus $T$ consisting of vectors $v\in T^1 S^2$ based at the equator. Recalling that $\eta=\cos \beta$, this implies that
\[
f(\cos \beta) = \tilde{f}(\beta) \qquad \forall \beta\in (0,\pi),
\]
where $\tilde{f}:\R \rightarrow \R$ is a smooth (resp.\ analytic) $2\pi$-periodic function, and hence $f$ has a continuous extension to $[-1,1]$. Moreover, the $S^1$-symmetry implies that the function $\tilde{f}$ is even. Differentiating the above identity, we obtain
\[
- f'(\cos \beta) \sin \beta = \tilde{f}'(\beta) \qquad \forall \beta\in (0,\pi),
\]
which, together with the fact that $\tilde{f}'$ vanishes at $0$ and $\pi$ -- a consequence of the $2\pi$-periodic function $\tilde{f}$ being even -- implies that $f'$ has a continuous extension to $[-1,1]$. We conclude that in the case in which the curvature along the equator is positive, the function $F$ has a $C^2$ extension to $[-1,1]$.

We summarize what we have found in this section into the next lemma.

\begin{lemma}
\label{l:gen_fun}
There exists a unique smooth $($resp.\ analytic$)$ function $F: (-1,1) \rightarrow \R$ such that:
\begin{enumerate}[$(i)$]
\item $F(0)=-\frac{2m}{r_{\max}}$, $F'(0)=2\pi$;
\item the function $\eta \mapsto F(\eta) - 2\pi \eta$ is even;
\item $\varphi(\theta,\eta) = (\theta+F'(\eta),\eta)$ for every $(\theta,\eta)\in S^1 \times (-1,1) \cong \Sigma$;
\item $\tau(\theta,\eta) =  r_{\max} ( \eta F'(\eta) - F(\eta))$ for every $(\theta,\eta)\in S^1 \times (-1,1) \cong \Sigma$;
\item $F'(\eta) = \Theta(\eta)$ for every $\eta\in (0,1)$;
\item If the curvature along the equator is positive, then $F$ has a $C^2$ extension to $[-1,1]$.
\item If the curvature along the equator vanishes, then $F'(\eta)$ diverges to $\pm \infty$ for $\eta \rightarrow \pm 1$. 
\end{enumerate}
\end{lemma}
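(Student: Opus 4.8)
The plan is to read $F$ off directly from the triple $(\varphi,\tau,\lambda)$ constructed in this section, and then to verify the seven properties by collecting the identities established above. First I would define $f:(-1,1)\to\R$ as the $\theta$-shift of the first return map, as in \eqref{e:return_map}; its existence rests on the $S^1$-symmetry together with the invariance of Clairaut's first integral. Since $f$ is only determined modulo an integer multiple of $2\pi$, I would fix it by the normalization \eqref{e:norm}, namely $f(0)=2\pi$, and then take $F$ to be the unique primitive of $f$ with $F(0)=-\frac{2m}{r_{\max}}$. Property (i) is then immediate, since $F'(0)=f(0)=2\pi$; property (iii) holds because $F'=f$ and $\varphi$ has the form \eqref{e:return_map}; and property (v) is exactly \eqref{e:theta_shift}.

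For property (ii), I would use that $\eta\mapsto f(\eta)-2\pi$ is odd, a consequence of the reflection symmetry of $g$ discussed above; since the primitive of an odd function that vanishes at the origin is even, the function $\eta\mapsto F(\eta)-2\pi\eta-F(0)$ is even, and hence so is $\eta\mapsto F(\eta)-2\pi\eta$. Property (iv) is the one requiring genuine computation: starting from the Liouville-type identity \eqref{e:liouville} one obtains $\tau'(\eta)=r_{\max}\,\eta f'(\eta)$, and since $\frac{d}{d\eta}\bigl(\eta F'(\eta)-F(\eta)\bigr)=\eta F''(\eta)=\eta f'(\eta)$, the two functions $\tau$ and $r_{\max}(\eta F'(\eta)-F(\eta))$ differ by a constant. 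I would pin this constant down to zero by evaluating at a meridian vector $(\eta=0)$, where $\tau=2m$ while $r_{\max}(0-F(0))=2m$ by the choice of $F(0)$.

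The remaining properties (vi) and (vii) describe the boundary behaviour, and I would delegate them to Lemma \ref{l:extension}. In the positive-curvature case, the smooth (resp.\ analytic) extension of $\varphi$ to the full torus $T$ yields, through the substitution $\eta=\cos\beta$ and the even $2\pi$-periodic function $\tilde f$, continuous extensions of both $f$ and $f'$ to $[-1,1]$, so that $F$ acquires a $C^2$ extension there; in the vanishing-curvature case the divergence of $\tau$ at $\Gamma$ forces $\Theta(\eta)\to\pm\infty$, hence $F'(\eta)=f(\eta)\to\pm\infty$ as $\eta\to\pm1$. Uniqueness of $F$ follows at once, since (i) and (iii) together determine both $F'=f$ and the value $F(0)$. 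The only step with real content is (iv)---correctly deriving $\tau'=r_{\max}\,\eta f'$ from \eqref{e:liouville} and fixing the integration constant by the meridian normalization; everything else is bookkeeping of identities already in hand, with the boundary analysis deferred to Lemma \ref{l:extension}.
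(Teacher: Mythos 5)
Your proposal is correct and follows essentially the same route as the paper, whose proof of Lemma \ref{l:gen_fun} is precisely the body of Section \ref{s:genfun}: define $f$ from the return map via \eqref{e:return_map}, normalize by \eqref{e:norm}, integrate the Liouville identity \eqref{e:liouville} to get $\tau'(\eta)=r_{\max}\,\eta f'(\eta)$ and fix the constant with the meridian value $\tau=2m$, use the reflection symmetry for the oddness of $f-2\pi$, and delegate the boundary behaviour to Lemma \ref{l:extension} through the substitution $\eta=\cos\beta$. All steps, including the uniqueness remark, match the paper's argument.
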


We shall refer to the function $F: (-1,1) \rightarrow \R$ of the above lemma as to the \textit{generating function} of the geodesic flow of $g$.

\begin{remark}
One could show that $F$ has always a continuous extension to $[-1,1]$, see \cite[Lemma 4.3]{abhs21}. We shall not need this fact here.
\end{remark}

\section{The marked length spectrum}
\label{s:mls}

Let $g$ be a smooth (resp.\ analytic) metric on $S^2$ belonging to the set $\mathcal{G}$. Recall that an unoriented closed geodesic which is neither the equator nor a meridian is said to have type $(p,q)$ if it winds $p$ times around the $z$-axis while intersecting the equator $2q$ times. Here, $p$ is a positive integer because the invariance of the Clairaut first integral implies that the coefficient of $\partial_{\theta}$ in the expression \eqref{e:Reeb} for the vector field generating the geodesic flow never vanishes along orbits which do not project to meridians, and $q$ is a positive integer because the assumption that $g$ has only one equator implies that alternative (ii) in Lemma \ref{l:behaviour-geodesics} holds. 

If it has length $\ell$, this closed geodesic of type $(p,q)$ corresponds to an orbit of the geodesic flow of minimal period $\ell$ and
of the form
\begin{equation}
\label{e:orbit}
t\mapsto \phi^t(v) = (\sigma(t), \theta(t), \beta(t)),
\end{equation}
with
\begin{equation}
\label{e:shift-p}
|\theta(\ell) - \theta(0)| = 2\pi p.
\end{equation}
Moreover, if $\gamma(t)=\pi \circ \phi^{t}(v)$ meets the equator for $t=t'$ with an angle $\beta(t')\in (0,\pi)$, and the next encounter with the equator with an angle in $(0,\pi)$ is at time $t''$, then the conservation of the Clairaut first integral and the $S^1$-invariance imply that $\beta(t'')=\beta(t')$ and $t''-t' = \frac{\ell}{q}$ is the minimal period of the curve $t\mapsto (\sigma(t),\beta(t))$. We conclude that closed geodesics of type $(p,q)$ and length $\ell$ correspond to orbits \eqref{e:orbit} of the geodesic flow in $T^1 S^2 \setminus C^{-1}(0)$ such that \eqref{e:shift-p} holds and the projected curve $t\mapsto \Pi ( \phi^t(v))$ on the quotient of $T^1 S^2$ by the $S^1$-action  has minimal period~$\frac{\ell}{q}$.

This dynamical interpretation of the type $(p,q)$ implies that $p$ and $q$ must be coprime: if $p=rp'$ and $q=rq'$ with $r,p',q'\in \N$, then $\frac{\ell}{r} = q' \frac{\ell}{q}$ is a period of the curve $t\mapsto \Pi ( \phi^t(v))$ and hence
\[
\bigl| \theta \bigl({\textstyle \frac{\ell}{r} } \bigr) - \theta(0) \bigr| = 2 \pi \frac{p}{r} = 2 \pi p',
\]
so $\frac{\ell}{r}$ is a period of the orbit of $v$, and the fact that the minimal period of this orbit is $\ell$ implies that $r=1$.

Recall that the marked length spectrum of $g\in \mathcal{G}$ is given by the length $\ell$ of the equator and the set-valued function
\[
\mathcal{L}_g : \{ (p,q)\in \N\times \N \mid p, q \mbox{ coprime}\} \rightarrow \mathbbm{2}^{(0,+\infty)}, \; (p,q) \mapsto \left\{ {\small \begin{array}{@{}c@{}} \mbox{lengths of closed} \\ \mbox{geodesics of type } (p,q)  \end{array} } \right\},
\]
and that two metrics are said to be isospectral if they have the same marked length spectrum.

\begin{remark}
The above notion of marked spectrum does not take the length of the meridians into account. By the above dynamical interpretation of the type, meridians could be considered as closed geodesics of type $(1,1)$. By including the length of the meridians in the set $\mathcal{L}_g(1,1)$, one  obtains a slightly different definition of marked length spectrum, for which all the results of this paper would still hold.
\end{remark}

\begin{proposition}
\label{p:coniso}
Let $g_1$ and $g_2$ be metrics in $\mathcal{G}$ with $S^1$-equivariantly conjugate geodesic flows $($here, no regularity on the conjugacy is needed$)$. Then $g_1$ and $g_2$ are isospectral.  
\end{proposition}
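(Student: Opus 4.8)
The plan is to show that an $S^1$-equivariant conjugacy between the two geodesic flows preserves all the dynamical invariants that encode the marked length spectrum, namely: the length $\ell$ of the equator, the integer $p$ (the $\theta$-winding), and the integer $q$ (the number of $S^1$-reduced periods within one full period). Let $h: T^1_{g_1} S^2 \to T^1_{g_2} S^2$ be the given $S^1$-equivariant conjugacy, so that $h \circ \phi^t_{g_1} = \phi^t_{g_2} \circ h$ for all $t$, and $h$ intertwines the two $S^1$-actions. Since no regularity is assumed, I will only use the topological/dynamical structure: $h$ maps orbits to orbits preserving the time parametrization, hence preserves minimal periods, and it maps $S^1$-orbits to $S^1$-orbits.

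First I would observe that $h$ sends the set $\Gamma_{g_1}$ to $\Gamma_{g_2}$. Indeed, as noted in the excerpt just before Proposition \ref{p:coniso}, the two orbits in $\Gamma_g$ are precisely the orbits of the geodesic flow which are simultaneously orbits of the $S^1$-action; since $h$ conjugates both the geodesic flows and the $S^1$-actions, it must carry this intrinsically characterized set to its counterpart. In particular, the two orbits corresponding to the equator of $g_1$ are mapped to those of $g_2$, and because $h$ preserves the time parametrization of the flow, the minimal period of the equatorial orbit---which equals the length $\ell$ of the equator---is preserved. This gives the equality of the equator lengths.

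Next I would fix a coprime pair $(p,q)$ and establish a bijection between closed orbits of $\phi_{g_1}$ of type $(p,q)$ and those of $\phi_{g_2}$ of the same type, with matching lengths. The key is the purely dynamical characterization of type $(p,q)$ worked out just above the proposition: a closed geodesic of type $(p,q)$ and length $\ell'$ corresponds to a periodic orbit $t\mapsto\phi^t(v)$ in $T^1 S^2 \setminus C^{-1}(0)$ with minimal period $\ell'$, such that the total $\theta$-shift over one period is $2\pi p$ in absolute value and such that the projected curve $t\mapsto \Pi(\phi^t(v))$ on the quotient $W = T^1 S^2/S^1$ has minimal period $\ell'/q$. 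Since $h$ is an $S^1$-equivariant conjugacy, it descends to a homeomorphism $\bar h: W_{g_1} \to W_{g_2}$ conjugating the two induced flows on the quotients, and it sends $C_{g_1}^{-1}(0)$ (the set of orbits projecting to meridians, characterized as the union of $S^1$-orbits on which the geodesic flow has period $2m$ with zero asymptotic $\theta$-winding) to $C_{g_2}^{-1}(0)$. Therefore $h$ carries a type-$(p,q)$ orbit of $\phi_{g_1}$ to a periodic orbit of $\phi_{g_2}$ of the same minimal period $\ell'$; this image orbit has the same quotient minimal period $\ell'/q$ under $\bar h$, hence the same $q$, and the same absolute $\theta$-winding $2\pi p$, hence the same $p$. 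Running the argument with $h^{-1}$ gives the reverse inclusion.

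The one point requiring care---and the main obstacle---is verifying that the $\theta$-winding number $p$ is genuinely preserved by $h$, since $\theta$ is a coordinate-dependent quantity rather than something manifestly intrinsic to the flow. The clean way to handle this is to recast $p$ through the quotient periods instead of through the coordinate $\theta$ directly: the minimal period of the full orbit is $\ell'$, the minimal period of the $S^1$-reduced orbit is $\ell'/q$, and the element of $\pi_1(S^1)\cong\Z$ traced out by the $S^1$-holonomy when one closes up the reduced orbit over the reduced period equals exactly $p$. Because $h$ respects the principal $S^1$-bundle structure (it is $S^1$-equivariant, so it is a bundle map covering $\bar h$), this holonomy class is preserved, and with it the value of $p$. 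Once the triple $(\ell', p, q)$ is shown to be invariant, the equality $\mathcal{L}_{g_1}(p,q) = \mathcal{L}_{g_2}(p,q)$ for every coprime $(p,q)$ follows, and together with the preservation of $\ell$ this is exactly the statement that $g_1$ and $g_2$ are isospectral.
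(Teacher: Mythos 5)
Your overall skeleton matches the paper's: equivariance forces the conjugacy to carry $\Gamma_{g_1}$ to $\Gamma_{g_2}$ (these being the only $S^1$-orbits that are also flow orbits), preservation of minimal periods gives equality of the equator lengths, and the type $(p,q)$ is read off invariantly from the minimal period of the projected orbit on $W=T^1S^2/S^1$ together with the $S^1$-holonomy closing up the reduced orbit -- all of which an $S^1$-equivariant topological conjugacy preserves. Up to that point your argument is sound.

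The genuine gap is your claim that $h$ sends $C_{g_1}^{-1}(0)$ to $C_{g_2}^{-1}(0)$. The characterization you offer -- ``the union of $S^1$-orbits on which the geodesic flow has period $2m$ with zero asymptotic $\theta$-winding'' -- is not conjugacy-invariant and is moreover incorrect on its own terms. First, the meridian length $2m$ is deliberately excluded from the marked length spectrum, and nothing at this stage shows $m_1=m_2$ (that equality is only deduced later, from Theorem \ref{thmB}); even granting equal periods, a rational Clairaut torus of $g_2$ of type $(1,1)$ consists of closed orbits that could have exactly the same period, so the period does not single out the meridian torus. Second, meridians do not have zero winding: dynamically they carry precisely the data of a type-$(1,1)$ family (quotient minimal period equal to the full period, holonomy winding one) -- this is why the paper remarks that meridians ``could be considered as closed geodesics of type $(1,1)$.'' Since the conjugacy is merely a homeomorphism, it need not preserve the Clairaut integral, and the remark following Proposition \ref{p:coniso} states explicitly that only under the additional assumption that $h$ is a contactomorphism can one conclude that meridians go to meridians. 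So for the pair $(p,q)=(1,1)$ your bijection can fail exactly as stated: a closed geodesic of $g_1$ of type $(1,1)$ and length $\ell$ may be sent by $h$ to a meridian of $g_2$, and your argument then produces no element of $\mathcal{L}_{g_2}(1,1)$. The paper closes this case with an extra idea your proposal is missing: if $h(v)$ is tangent to a meridian, then $h$ maps the two-torus $T(v)$ generated by the $S^1$-action and the flow through $v$ \emph{onto} the full meridian torus of $g_2$ (the $S^1$-action is transitive on oriented meridians); since $C_{g_1}(-v)=-C_{g_1}(v)\neq C_{g_1}(v)$, the vector $-v$, tangent to the same unoriented geodesic, lies outside $T(v)$, hence $h(-v)$ lies outside the meridian torus and outside $\Gamma_{g_2}$, so it generates a closed geodesic of $g_2$ of type $(1,1)$ and length $\ell$ after all. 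Note that for $(p,q)\neq(1,1)$ the meridian case is excluded automatically by your invariance of $(p,q)$, so the gap is concentrated at, but genuinely present at, $(1,1)$.
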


\begin{proof}
By $S^1$-equivariance, the conjugacy must map $\Gamma_{g_1}$ to $\Gamma_{g_2}$, as the two $S^1$-orbits in these sets are the unique $S^1$-orbits which are also orbits of the geodesic flows. Since conjugacies preserve periods, the equators of $g_1$ and $g_2$ have the same length. 

Now let $(p,q)$ be a pair of coprime natural numbers, let $\ell\in \mathcal{L}_{g_1}(p,q)$, and consider a vector $v\in T^1_{g_1} S^2$ which is tangent to an unoriented closed geodesic $\gamma$ of $g_1$ which is neither a meridian nor the equator, has type $(p,q)$ and length $\ell$. By the above dynamical interpretation of the type and $S^1$-equivariance, the conjugacy maps $v$ to a periodic point of the geodesic flow $\phi^t_{g_1}$ with the same minimal period $\ell$ and whose associated closed geodesic of $g_2$ is either of type $(p,q)$ or a meridian. The latter can happen only if $(p,q)=(1,1)$. In the first case, we have $\ell\in \mathcal{L}_{g_2}(p,q)$. 

In the second case, we observe that the 2-torus $T(v)$ in $T^1_{g_1} S^2$ generated by acting on $v$ by the $S^1$-action and by the geodesic flow of $g_1$ is mapped to the 2-torus in $T^1_{g_2} S^2$ given by all the orbits which project to meridians of $g_2$. Therefore $-v$, which is not in $T(v)$ and is tangent to the same unoriented closed geodesic $\gamma$, is mapped by the conjugacy to a vector in $T_{g_2}^1 S^2$ generating a closed geodesic of $g_2$ which is neither the equator nor a meridian, has type $(p,q)=(1,1)$ and length $\ell$. Therefore, also in this case we have $\ell\in \mathcal{L}_{g_2}(1,1)$.

We conclude that $\mathcal{L}_{g_1}(p,q) \subset \mathcal{L}_{g_2}(p,q)$ for all pairs of coprime natural numbers $(p,q)$, and by reversing the role of $g_1$ and $g_2$ we obtain the equality.
\end{proof}

\begin{remark}
If the $S^1$-equivariant conjugacy between the geodesic flows of $g_1$ and $g_2$ is assumed to be a contactomorphism, then it preserves the Clairaut first integral and hence maps the orbits corresponding to meridians of $g_1$ to orbits corresponding to meridians of $g_2$. Under this assumption, the second case in the above proof cannot hold.
\end{remark}

\begin{remark}
The type $(p,q)$ of a closed geodesic $\gamma(t) = \pi\circ \phi^t(v)$  is also related to the rotation vector of the geodesic flow on the invariant two-torus 
\[
T_v:=  \{ (\sigma,\theta,\beta) \in T^1 S^2 \mid \theta\in S^1, \; r(\sigma) \cos \beta = C(v) \}. 
\]
Indeed, endow $H_1(T_v,\R) \cong \R^2$ with the basis which is dual to the basis of $H^1(T_v,\R)$ given by $[\frac{1}{2\pi} d\theta]$ and $[\Pi^* \zeta]$, where $\zeta$ is a closed one-form on the quotient two-sphere $W=T^1S^2/S^1$ minus the two-points-set $\Pi(\Gamma)$ having integral 1 on the level sets of the function $H:W\rightarrow \R$ defined by $C=H\circ \Pi$. Then it is easy to show that rotation vector of the restriction of the geodesic flow to $T_v$ has the form $\frac{1}{\ell}(p',q')$ with $p=|p'|$ and $q=|q'|$, $\ell$ being the length of $\gamma$.
\end{remark}

\begin{remark}
Denote by $I$ the closure of the set of all positive rational numbers $\frac{p}{q}$ where $p$ and $q$ are coprime natural numbers such that $\mathcal{L}_g(p,q)\neq \emptyset$. It is easy to show that $I$ is a closed interval containing 1 and bounded away from zero. It is bounded from above if, and only if, the curvature at the equator does not vanish. This interval is the set of frequency ratios of all the invariant tori $C^{-1}(r)$, $|r|< r_{\max}$, of the geodesic flow.
\end{remark}

\section{Construction of the conjugacy away from the equator}
\label{s:away}

The next result is the main ingredient in the proof of Theorem \ref{thmA}.

\begin{proposition}
\label{p:equalF}
The marked length spectrum of $g\in \mathcal{G}$ determines the generating function $F:(-1,1) \rightarrow \R$ of the geodesic flow, and hence the first return time $\tau: \Sigma \rightarrow (0,+\infty)$ and first return map $\varphi: \Sigma \rightarrow \Sigma$ to the Birkhoff annulus of the equator. 
\end{proposition}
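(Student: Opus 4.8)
The plan is to translate the marked length spectrum into data about the \emph{tangent lines} to the graph of the generating function $F$ over the interval $(0,1)$, and then to apply the reconstruction result of Appendix \ref{s:appA}, according to which a $C^2$ function on an interval is determined by the set of tangent lines to its graph.

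First I would express the closing condition for geodesics in terms of $F$. By the reflection symmetry $(\sigma,\theta)\mapsto(\sigma,\theta_0-\theta)$, which preserves both the type and the length, it suffices to consider closed geodesics tangent to vectors of $\Sigma$ with $\eta\in(0,1)$. Such a geodesic of type $(p,q)$ crosses the equator northward exactly $q$ times, so it corresponds to a point $(\theta,\eta)\in\Sigma$ whose $\varphi$-orbit closes up after $q$ steps while its lifted angular variation equals $2\pi p$. Since $\varphi^q(\theta,\eta)=(\theta+q\,F'(\eta),\eta)$ by Lemma \ref{l:gen_fun}(iii), and the lifted angular variation after $q$ returns is $q\,\Theta(\eta)=q\,F'(\eta)$ by Lemma \ref{l:gen_fun}(v), the closing condition reads
\[
F'(\eta)=\frac{2\pi p}{q},
\]
and, because $\tau$ depends only on $\eta$, the length of the geodesic equals $q\,\tau(\eta)$. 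Writing each value $F'(\eta)$ that is a rational multiple of $2\pi$ in lowest terms $2\pi p/q$ and using the primitivity argument of Section \ref{s:mls}, this correspondence is exact, so that
\[
\mathcal{L}_g(p,q)=\Bigl\{\,q\,\tau(\eta)\ \Bigm|\ \eta\in(0,1),\ F'(\eta)=\tfrac{2\pi p}{q}\,\Bigr\}
\]
for every coprime pair $(p,q)$.

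Next I would pass to tangent lines. The tangent line to the graph of $F$ at $\eta$ is $y=F'(\eta)\,(x-\eta)+F(\eta)$; its slope is $F'(\eta)$ and, by Lemma \ref{l:gen_fun}(iv), its $y$-intercept is
\[
F(\eta)-\eta\,F'(\eta)=-\frac{\tau(\eta)}{r_{\max}}.
\]
The equator has length $2\pi r_{\max}$, which is part of the marked length spectrum, so $r_{\max}$ is determined; hence from $\mathcal{L}_g(p,q)$ one reads off precisely the set of tangent lines of slope $2\pi p/q$, namely the lines $y=\frac{2\pi p}{q}x-\frac{\ell}{q\,r_{\max}}$ with $\ell\in\mathcal{L}_g(p,q)$. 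Running over all coprime $(p,q)$, the marked length spectrum therefore determines exactly the set of those tangent lines to the graph of $F|_{(0,1)}$ whose slope lies in $2\pi\Q_{>0}$.

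It then remains to upgrade this to the full set of tangent lines and to invoke Appendix \ref{s:appA}. I would consider the continuous map $\eta\mapsto(F'(\eta),\,F(\eta)-\eta F'(\eta))$ recording the slope and intercept of each tangent line. Its first component $F'$ is continuous, hence has an interval as image, in which $2\pi\Q_{>0}$ is dense; so every tangent line is a limit of tangent lines of slope in $2\pi\Q_{>0}$, except possibly where $F'$ is locally constant at a value $c\notin2\pi\Q$. In that case, on a maximal interval with $F'\equiv c$ the tangent line is constant, but approaching either endpoint from outside the interval $F'$ takes values $\neq c$ tending to $c$, hence rational multiples of $2\pi$, and continuity again realizes this tangent line as a limit of observed ones. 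Consequently the observed lines have the same closure as the whole family $\{\,y=F'(\eta)(x-\eta)+F(\eta)\mid\eta\in(0,1)\,\}$; exhausting $(0,1)$ by compact subintervals, one recovers the set of all tangent lines to $F$ on each $[a,b]\subset(0,1)$ and hence, by Appendix \ref{s:appA}, the function $F$ on $(0,1)$. Finally, the normalization $F'(0)=2\pi$ and the evenness of $\eta\mapsto F(\eta)-2\pi\eta$ from Lemma \ref{l:gen_fun}(i)--(ii), i.e.\ $F(-\eta)=F(\eta)-4\pi\eta$, extend $F$ to all of $(-1,1)$, whereupon Lemma \ref{l:gen_fun}(iii)--(iv) recover $\varphi$ and $\tau$. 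The main obstacle is exactly this last passage from rational slopes to arbitrary ones: the density argument in the presence of a possibly non-injective $F'$, together with the tangent-line reconstruction of Appendix \ref{s:appA}.
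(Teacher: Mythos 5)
Your first three steps coincide with the paper's proof: the correspondence between closed geodesics of type $(p,q)$ and points $\eta\in(0,1)$ with $F'(\eta)=2\pi\frac{p}{q}$, the resulting identity $\mathcal{L}_g(p,q)=\{q\,\tau(\eta)\mid F'(\eta)=2\pi\frac{p}{q}\}$, the identification of the spectral data (using $r_{\max}=\ell/2\pi$) with the set $\mathcal{S}$ of tangent lines to the graph of $F|_{(0,1)}$ whose slope lies in $2\pi\Q$, and the density argument showing that every tangent line is a limit of lines in $\mathcal{S}$ -- your case analysis for locally constant $F'$ is exactly the content of the ``elementary fact'' invoked in the paper, and the degenerate case where $F'$ is constant on all of $(0,1)$ is harmless since the normalization $F'(0)=2\pi$ forces the value $2\pi\in 2\pi\Q$. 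The gap is in your last step. From the closure $\overline{\mathcal{S}}$ you claim to ``recover the set of all tangent lines to $F$ on each $[a,b]\subset(0,1)$''; but $\overline{\mathcal{S}}$ is an unstructured set of lines carrying no record of tangency points, so there is no mechanism for singling out the lines tangent over a given subinterval. Nor can you feed $\overline{\mathcal{S}}$ into Theorem \ref{t:T(f)} as the tangent set over $(0,1)$: in general $\overline{\mathcal{S}}\supsetneq\mathcal{T}(F|_{(0,1)})$, because the closure contains the limiting tangent lines at the endpoints -- the line of slope $2\pi$ arising from $\eta\downarrow 0$, and, when the curvature at the equator is positive, the tangent line at $\eta=1$ of the $C^2$ extension -- and for strictly convex $F$ these are tangent at no interior point. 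Theorem \ref{t:T(f)} requires exact equality of tangent sets over a common interval, not equality of closures, so the compact-exhaustion step does not close the argument.

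The paper bridges precisely this point with its claims (a) and (b), which rest on the dichotomy of Lemma \ref{l:gen_fun} (vi)--(vii) that your proposal never invokes: if $\mathcal{S}$ is bounded, then $F'$ is bounded on $[0,1)$ (its values there lie in the closure of $F'((0,1))\cap 2\pi\Q$ once $F'$ is non-constant), which rules out alternative (vii), so $F$ extends $C^2$ to $[-1,1]$ and $\overline{\mathcal{S}}=\mathcal{T}(F|_{[0,1]})$, a compact set; if $\mathcal{S}$ is unbounded, then $F'(\eta)\to+\infty$ as $\eta\to 1$, the set $\mathcal{T}(F|_{[0,1)})$ is closed, and $\overline{\mathcal{S}}=\mathcal{T}(F|_{[0,1)})$. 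In either case $\overline{\mathcal{S}}$ is recognized as the \emph{exact} tangent set of $F$ over a definite interval containing $0$, and Theorem \ref{t:T(f)} (which holds on arbitrary intervals, so the half-open case is covered) recovers $F$ there; the extension to $(-1,1)$ via Lemma \ref{l:gen_fun} (ii) then proceeds as you indicate. Your argument becomes correct once the compact-exhaustion step is replaced by this boundedness dichotomy.
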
 

\begin{proof}
From the properties of the Birkhoff annulus discussed in Section \ref{s:birkhoff}, we deduce that there is a one-to-one correspondence between oriented closed geodesics other than the equator and periodic points of the first return map $\varphi$ to the Birkhoff annulus $\Sigma$. More precisely, let $v=(\theta,\eta) \in \Sigma$ be a periodic point of $\varphi$ with minimal period $q\in \N$. Then the geodesic $\gamma_v$ which is tangent to $v$ is closed and meets the equator $2q$ times. Moreover, Lemma \ref{l:gen_fun} (iii) implies that
\[
\varphi^q(\theta,\eta) = (\theta + q F'(\eta), \eta),
\]
so the fact that $(\theta,\eta)$ is a fixed point of $\varphi^q$ implies that  
\begin{equation}
\label{e:F'}
F'(\eta) = 2\pi \frac{p}{q},
\end{equation}
for some $p\in \Z$. Assume that $\gamma_v$ is not a meridian, i.e., that $\eta\neq 0$. Up to applying the reflection with respect to a plane containing the $z$-axis, we may assume that $\eta\in (0,1)$. By Lemma \ref{l:gen_fun} (v), the integer $p$ is positive, the geodesic $\gamma_v$ winds $p$ times around the $z$-axis and hence is of type $(p,q)$. By Lemma \ref{l:gen_fun} (iv), its length is
\begin{equation}
\label{e:length}
q \tau(\theta,\eta) = r_{\max} q \bigl( \eta F'(\eta) - F(\eta) \bigr). 
\end{equation}
Conversely, every unoriented closed geodesic which is neither the equator nor a meridian and is of type $(p,q)$ corresponds to a periodic point $v=(\theta,\eta)$ of $\varphi$ with minimal period $q$ with $\eta\in (0,1)$ and satisfying \eqref{e:F'}. Together with the fact that the length of this geodesic is given by \eqref{e:length}, this implies that
\begin{equation}
\label{e:mls}
\mathcal{L}_g(p,q) = \Bigl\{ r_{\max} q \bigl( \eta F'(\eta) - F(\eta) \bigr) \mid \eta\in (0,1) \mbox{ such that } F'(\eta) = 2\pi \frac{p}{q} \Bigr\},
\end{equation}
and hence
\[
\frac{1}{r_{\max} q} \mathcal{L}_g(p,q) = \Bigl\{  \eta F'(\eta) - F(\eta)  \mid \eta\in (0,1) \mbox{ such that } F'(\eta) = 2\pi \frac{p}{q} \Bigr\}.
\] 
The length $\ell$ of the equator determines $r_{\max}$ by the identity $\ell = 2\pi r_{\max}$, so this length together with the set-valued function $\mathcal{L}_g$ determine the following subset of $\R^2$:
\[
\mathcal{S} := \Bigl\{  (F'(\eta), \eta F'(\eta) - F(\eta))  \mid \eta\in (0,1) \mbox{ such that } F'(\eta) \in 2\pi \Q \Bigr\}.
\]

Given a differentiable function $G: I \rightarrow \R$ on the interval $I\subset \R$, we set
\[
\mathcal{T}(G):= \{ (G'(\eta), \eta G'(\eta) - G(\eta)) \mid \eta \in I \}.
\]
Since the tangent line to the graph of $G$ at $(\eta,G(\eta))$ has the form
\[
\{ (x,y)\in \R^2 \mid y = G'(\eta) x - (\eta G'(\eta) - G(\eta)) \},
\]
the set $\mathcal{T}(G)$ can be identified with the set of tangent lines to the graph of $G$. With this identification, $\mathcal{S}$ is the set of lines which are tangent to the graph of $F|_{(0,1)}$ and whose slope is a rational multiple of $2\pi$.

We claim that the following facts hold:
\begin{enumerate}[(a)]
\item If $\mathcal{S}$ is bounded, then $F$ has a $C^2$ extension to the interval $[-1,1]$, and the closure of $\mathcal{S}$ in $\R^2$ is
\[
\overline{\mathcal{S}} = \mathcal{T}(F|_{[0,1]}).
\]
\item If $\mathcal{S}$ is unbounded, then the closure of $\mathcal{S}$ in $\R^2$ is
\[
\overline{\mathcal{S}} = \mathcal{T}(F|_{[0,1)}).
\]
\end{enumerate}

In the proof of (a) and (b), we can assume that $F'$ is not constant on $[0,1)$, which by Lemma \ref{l:gen_fun} (ii) is equivalent to the fact that $F'$ is not constant on $(-1,1)$. Indeed, if $F'$ is constant then by Lemma \ref{l:gen_fun} (i) it takes the value $F'(0)=2\pi$, so $F(\eta) = 2\pi \eta - 2m$ on $(-1,1)$ and hence $\mathcal{S}=\{(2\pi,2m)\}$. Therefore, $\mathcal{S}$ is bounded and statement (a) trivially holds.

Assume now that $F'$ is not constant. Then $F'([0,1))$ is contained in the closure of $F'((0,1))\cap 2\pi \Q$. Therefore, if $\mathcal{S}$ is bounded then $F'$ is bounded on $[0,1)$, which by Lemma \ref{l:gen_fun} (vi) and (vii) implies that $F$ has a $C^2$ extension to $[-1,1]$, and $\mathcal{T}(F|_{[0,1]})$ is a compact set. If $\mathcal{S}$ is unbounded, then  Lemma \ref{l:gen_fun} (vii) implies that $F'(\eta)$ diverges to $+\infty$ for $\eta\rightarrow 1$, and hence $\mathcal{T}(F|_{[0,1)})$ is a closed set. Let $\mathcal{T}_1$ be either $\mathcal{T}(F|_{[0,1]})$ or $\mathcal{T}(F|_{[0,1)})$, depending on whether $\mathcal{S}$ is bounded or not. Then $\mathcal{T}_1$ is a closed set containing $\mathcal{S}$, and hence $\overline{S}\subset\mathcal{T}_1$. The opposite inclusion follows from the following elementary fact: if $\mathcal{Q}$ is a dense subset of $\R$ and $G$ is a continuously differentiable function on an interval $I$ which is not affine linear, then any tangent line to the graph of $G$ is the limit of a sequence of tangent lines at points $(\eta_n,G(\eta_n)$ with $\eta_n$ in the interior of $I$ and $G'(\eta_n)\in \mathcal{Q}$. This concludes the proof of claims (a) and (b).

We now conclude the proof of the lemma. The marked length spectrum of $g$ determines the set $\mathcal{S}$ and hence its closure $\overline{\mathcal{S}}$. By claims (a) and (b), this closure is the set of tangent lines to the graph of either $F|_{[0,1]}$ or $F|_{[0,1)}$, depending on whether $\mathcal{S}$ is bounded or not. 

The set of tangent lines to the graph of a sufficiently regular real function on an interval $I$ uniquely determines the function. This fact is proven for $C^{1,1}$ functions on compact intervals by Richardson in \cite[Theorem 4.8]{ric97}, using tools from geometric measure theory. In Appendix \ref{s:appA} below, we provide an elementary proof for $C^2$ functions on arbitrary intervals. Therefore, the marked length spectrum  determines the restriction of $F$ to either $[0,1]$ or $[0,1)$, and in either case it determines $F$ on $(-1,1)$ thanks to Lemma \ref{l:gen_fun} (ii). Since by statements (iii) and (iv) in Lemma \ref{l:gen_fun} the function $F$ determines $\tau$ and $\varphi$, this concludes the proof.
\end{proof}

\begin{remark}
\label{r:converse}
Conversely, identity \eqref{e:mls} implies that the value of $r_{\max}$ and the first return time map $\varphi$, which by Lemma \ref{l:gen_fun} (iii) determines $F'$, determine the set valued function $\mathcal{L}_g$.
\end{remark}

We can now prove the first part of Theorem \ref{thmA}, asserting that if two smooth (resp.\ analytic) metrics $g_1$ and $g_2$ in $\mathcal{G}$ are isospectral then the restrictions of the geodesic flows $\phi^t_{g_1}$ and $\phi^t_{g_2}$ to the complements of $\Gamma_{g_1}$ and $\Gamma_{g_2}$ are conjugated by a smooth (resp.\ analytic) $S^1$-equivariant contactomorphism. 

By Proposition \ref{p:equalF}, the isospectrality of $g_1$ and $g_2$ implies that their geodesic flows have the same generating function $F$ and hence, denoting by
\[
h_0 : \Sigma_{g_1} \rightarrow \Sigma_{g_2}
\]
the identification between the Birkhoff annuli induced by their common coordinates $(\theta,\eta)\in S^1 \times (-1,1)$ (see \eqref{e:cosi}), we have
\begin{equation}
\label{e:susi}
\tau_{g_1} = \tau_{g_2}\circ h_0, \qquad h_0 \circ \varphi_{g_1} = \varphi_{g_2} \circ h_0, \qquad h_0^* \lambda_{g_2} = \lambda_{g_1}.
\end{equation}
A smooth (resp.\ analytic) conjugacy 
\[
h: T_{g_1}^1 S^2 \setminus \Gamma_{g_1} \rightarrow T_{g_2}^1 S^2 \setminus \Gamma_{g_2} 
\]
can then be defined by dynamical continuation of $h_0$: given $v$ in $T_{g_1}^1 S^2 \setminus \Gamma_{g_1}$, let $t\in \R$ be such that $\phi_{g_1}^t(v) \in \Sigma_{g_1}$ and set
\[
h(v) := \phi_{g_2}^{-t}\bigl( h_0 (\phi_{g_1}^t(v) ) \bigr) \in T_{g_2}^1 S^2 \setminus \Gamma_{g_2}.
\]
Indeed, the first two identities in \eqref{e:susi} imply that the above definition does not depend on the choice of $t$ such that $\phi_{g_1}^t(v) \in \Sigma_{g_1}$. Equivalently, $h$ can be defined as the composition
\[
h = \mu_{g_2} \circ \tilde{h}_0 \circ \mu_{g_1}^{-1}
\]
where 
\[
\mu_{g_1} : M(\varphi_{g_1},\tau_{g_1}) \rightarrow T_{g_1}^1 S^2 \setminus \Gamma_{g_1} \quad \mbox{and} \quad \mu_{g_2} : M(\varphi_{g_2},\tau_{g_2}) \rightarrow T_{g_2}^1 S^2 \setminus \Gamma_{g_2}
\] 
are the canonical identifications with the mapping tori (see Section \ref{s:birkhoff}) and $\tilde{h}_0$ is the map
\[
\tilde{h}_0 :  M(\varphi_{g_1},\tau_{g_1}) \rightarrow M(\varphi_{g_2},\tau_{g_2}) , \qquad [(v,s)] \mapsto [(h_0(v),s)],
\]
which is a well defined smooth (resp.\ analytic) $S^1$-equivariant diffeomorphism because of  the first two identities in \eqref{e:susi}. By the third identity in \eqref{e:susi}, we have
\[
\tilde{h}_0^* (\lambda_{g_2} + ds ) = \lambda_{g_1} + ds,
\]
so \eqref{e:cfmt} implies that $h^* \alpha_{g_2} = \alpha_{g_1}$. This proves that $h$ is a smooth (resp.\ analytic) contactomorphism conjugating $\phi^t_{g_1}$ and $\phi^t_{g_2}$. Being a composition of $S^1$-equivariant maps, $h$ is $S^1$-equivariant. This concludes the proof of the first part of Theorem \ref{thmA}.

\begin{remark}
Let $\mathcal{Q}$ be a dense subset of $\Q \cap (0,+\infty)$. Then the proof of Proposition \ref{p:equalF} shows that the data given by the length of the equator and the restriction of $\mathcal{L}_g$ to the set of pairs of coprime natural numbers $(p,q)$ such that $\frac{p}{q}\in \mathcal{Q}$ is still sufficient to recover the generating function $F$. Therefore, Theorem \ref{thmA} holds also with this restricted version of the marked length spectrum.
\end{remark}

\section{Regular extension of the conjugacy at the equator}
\label{s:regular}

In the coordinates $(\sigma,\theta)$ on $S^2 \setminus \{N,S\}$, the curvature of a metric $g$ in $\mathcal{G}$ with profile function $r$ is
\begin{equation}
\label{e:curvature}
K(\sigma,\theta) = - \frac{r''(\sigma)}{r(\sigma)}.
\end{equation}
Let $g_1$ and $g_2$ be isospectral smooth (resp.\ analytic) metrics in $\mathcal{G}$. We now assume that the curvature of $g_1$ at its equator is positive. By Proposition \ref{p:equalF}, $g_1$ and $g_2$ have the same generating function $F$, and hence Lemma \ref{l:gen_fun} (vi)-(vii) implies that the curvature of $g_2$ at its equator is positive as well. We  wish to prove that in this case there exists a smooth (resp.\ analytic) conjugacy from $\phi^t_{g_1}$ to $\phi_{g_2}^t$ on the whole $T^1_{g_1} S^2$, thus proving statement (i) in Theorem \ref{thmA}.

Since the positivity of the curvature at the equator guarantees that the first return time is bounded, the conjugacy of the last section, which is defined as dynamical continuation of the natural identification $h_0$ between the Birkhoff annuli $\Sigma_{g_1}$ and $\Sigma_{g_2}$, can be shown to have a continuous extension to $T^1_{g_1} S^2$, but this extension is not necessarily smooth. See Example \ref{e:not-smooth} at the end of this section.

In order to construct a smooth (resp.\ analytic) conjugacy, we shall use some general facts about $S^1$-invariant contact forms which are discussed in Appendix \ref{s:appB}. 

We need to introduce some further notation. Recall that $V$ denotes the generator of the free $S^1$-action on $T^1 S^2$ and
\[
\Pi : T^1 S^2 \rightarrow W
\]
the corresponding quotient projection, where the quotient space $W$ is diffeomorphic to $S^2$. The Clairaut first integral $C = \alpha(V)$ is $S^1$-invariant and hence has the form $C=H\circ \Pi$, where $H$ is a smooth (resp.\ analytic) real function on $W$. The critical set of $C$ is the set $\Gamma$ consisting of the two orbits of the $S^1$-action which are also orbits of the geodesic flow, corresponding to the equator. Using geodesic polar coordinates, we see that 
$C=C(\sigma,\theta,\beta) = r(\sigma) \cos \beta$ achieves its maximum $r_{\max}$ on the orbit
\[
\Gamma^+  := \{ (\sigma,\theta,\beta) \mid \sigma=\sigma_{\max}, \; \theta\in S^1, \; \beta=0 \},
\]
and its minimum $-r_{\max}$ on the orbit
\[
\Gamma^-  := \{ (\sigma,\theta,\beta) \mid \sigma=\sigma_{\max}, \; \theta\in S^1, \; \beta=\pi \}.
\]
Seen as points in $W$, $\Gamma^+$ and $\Gamma^-$ are the unique critical points of $H$. The geodesic polar coordinates induce global coordinates $(\sigma,\beta) \in (0,m) \times S^1$ on $W$ minus the two points corresponding to the $S^1$-orbits given by the fibers of $T^1 S^2$ at the poles. In these coordinates, 
\begin{align}
\label{e:HH}
H(\sigma,\beta) = r(\sigma) \cos \beta, \qquad \Gamma^+ = (\sigma_{\max},0), \qquad \Gamma^- = (\sigma_{\max},\pi).
\end{align}
Note that the critical points $\Gamma^+$ and $\Gamma^-$ of $H$ are non-degenerate if, and only if, $r''(\sigma_{\max}) \neq 0$, corresponding to the case in which $g$ has positive curvature along the equator, see \eqref{e:curvature}. Therefore, the latter assumption guarantees that $H$ is a perfect Morse function.

As proven in Proposition \ref{p:symplectic_quotient}, $W$ admits a unique symplectic form $\omega$ such that
\begin{align}
\label{e:omega}
\Pi^* \omega = \imath_V \alpha\wedge d\alpha = r(\sigma) \, d\beta \wedge d\sigma,
\end{align}
(see \eqref{e:volume}), and the $S^1$-invariant Reeb vector field $R$ generating the geodesic flow projects by $\Pi$ to the Hamiltonian vector field $X_H$ of the Hamiltonian $H$ on the symplectic manifold $(W,\omega)$, where $X_H$ is defined by the identity $\imath_{X_H} \omega = dH$.
Note that all orbits of $X_H$ are periodic: $\Gamma^+$ and $\Gamma^-$ are equilibrium points, and the circles $H^{-1}(c)$ for  $c\in J:= (-r_{\max},r_{\max})$ are non-constant periodic orbits. Given $c\in J$, we denote by $T(c)$ the period of the orbit $H^{-1}(c)$.  

We denote by $(\theta,v)\mapsto \theta\cdot v$ the $S^1$-action on $T^1 S^2$. If $v\in T^1 S^2\setminus \Gamma$, the vector $\phi^{T(C(v))}(v)$ is on the $S^1$-orbit of $v$ and hence 
\[
\phi^{T(C(v))}(v) = \theta \cdot v,
\]
for some $\theta=\theta(v)\in S^1$. This function $\theta$ is $S^1$-invariant, so it descends to $W\setminus \{\Gamma^+,\Gamma^-\}$, on which it is invariant under the flow of $X_H$, again by the $S^1$-equivariance of $\phi^t$. Therefore, we can write
\[
\phi^{T(C(v))}(v) = \Theta(C(v)) \cdot v \qquad \forall v\in T^1S^2 \setminus \Gamma,
\]
for some function $\Theta: J \rightarrow S^1$. 

After this preliminary discussion, we can prove statement (i) in Theorem \ref{thmA}. We have already checked that if a metric in $\mathcal{G}$ has positive curvature at its equator then the same is true for any other metric in $\mathcal{G}$ which is isospectral to it. 

There remains to show that if $g_1$ and $g_2$ are isospectral smooth (resp.\ analytic) metrics in $\mathcal{G}$ with positive curvature at the equator, then there exists an $S^1$-equivariant smooth (resp.\ analytic) diffeomorphism $h: T^1_{g_1} S^2 \rightarrow T^1_{g_2} S^2$ such that $h^* \alpha_{g_2} = \alpha_{g_1}$. Here, $\alpha_{g_1}$ and $\alpha_{g_2}$ are the $S^1$-invariant Hilbert contact forms on $T^1_{g_1} S^2$ and $T^1_{g_2} S^2$. By our previous discussion and by Theorem \ref{mainappB}, it is enough to show that the marked length spectrum of a metric $g$ in $\mathcal{G}$ determines the interval $J=(\min H,\max H)$ and the functions $T: J \rightarrow (0,+\infty)$ and $\Theta : J \rightarrow S^1$ introduced above.

Since $\min H=-r_{\max}$ and $\max H=r_{\max}$, the interval $J$ is determined by the length $\ell=2\pi r_{\max}$ of the equator. The image by $\Pi$ of the Birkhoff annulus $\Sigma$ is the smooth curve in $W$ given by the points $(\sigma,\beta)$ with $\sigma=\sigma_{\max}$ and $\beta\in (0,\pi)$. This curve joins $\Gamma^+$ and $\Gamma^-$ and meets each circle $H^{-1}(c)$ with $c\in J$  exactly once. Therefore, for every $v\in \Sigma$ we have $\tau(v) = T(C(v))$, where $\tau$ denotes the first return time to the Birkhoff annulus. By Proposition \ref{p:equalF}, $\tau$ is determined by the marked length spectrum of $g$, and hence so is 
 the function $T:J \rightarrow (0,+\infty)$. The first return map $\varphi$ to $\Sigma$ is also determined by the marked length spectrum of $g$. Since for every $v\in \Sigma$ we have
\[
\varphi(v)= \phi^{\tau(v)}(v) = \phi^{T(C(v))}(v) = \Theta(C(v)) \cdot v, 
\]
we conclude that also the function $\Theta: J \rightarrow S^1$ is determined by the marked length spectrum of $g$. So $J$, $T$, and $\Theta$ are determined by the marked length spectrum of $g$, concluding the proof of statement (i) in Theorem \ref{thmA}.

\begin{example}
\label{e:not-smooth}
Let $r:[0,m] \rightarrow \R$ be the profile function of a smooth metric $g_1$ in $\mathcal{G}$ with positive curvature along the equator, and let $g_2$ be the metric with profile function $\sigma\mapsto r(m-\sigma)$. The reflection $(\sigma,\theta) \mapsto (m-\sigma,\theta)$ is an isometry between $g_1$ and $g_2$ and induces a smooth conjugacy between the geodesic flows of the two metrics. However, the smooth conjugacy $h: T_{g_1}^1 S^2 \setminus \Gamma_{g_1} \rightarrow T_{g_2}^1 S^2 \setminus \Gamma_{g_2}$ which is obtained by dynamical continuation from the identification $h_0: \Sigma_{g_1} \rightarrow \Sigma_{g_2}$ of the Birkhoff annuli does not necessarily admit a $C^2$ extension to the whole $T^1_{g_1} S^2$.

Indeed, assume for the sake of simplicity that $r$ achieves its maximum at $\frac{m}{2}$, where $r''(\frac{m}{2})<0$ because we are assuming that the curvature along the equator is positive. We claim that if $r'''(\frac{m}{2})\neq 0$, then the conjugacy $h$ does not have a $C^2$ extension over $\Gamma_{g_1}$. 

As seen above, the geodesic flows $\phi^t_{g_1}$ and $\phi^t_{g_2}$ project to Hamiltonian flows on the quotients $W_1$ and $W_2$ by the $S^1$-action. We can identify $W_1$ and $W_2$ to the same two-sphere $W$ by the use of common coordinates $(\sigma,\beta)$ on them, and with this choice the Hamiltonian and symplectic forms induced by $g_1$ and $g_2$ read
\begin{align*}
H_1(\sigma,\beta) = r(\sigma) \cos \beta, \qquad H_2(\sigma,\beta) = r(m-\sigma) \cos \beta, \\
\omega_1 = r(\sigma)\, d\beta \wedge d\sigma, \qquad \omega_2 = r(m-\sigma)\, d\beta\wedge d\sigma.
\end{align*}
Both $\Gamma_{g_1}$ and $\Gamma_{g_2}$ project to the set consisting of the two common critical points $\Gamma^+=(\frac{m}{2},0)$ and $\Gamma^-=(\frac{m}{2},\pi)$ of the Hamiltonians $H_1$ and $H_2$. Denote by $\psi^t_1$ and $\psi^t_2$ the Hamiltonian flows of $(H_1,\omega_1)$ and $(H_2,\omega_2)$. Then the smooth conjugacy $h: T_{g_1}^1 S^2 \setminus \Gamma_{g_1} \rightarrow T_{g_2}^1 S^2 \setminus \Gamma_{g_2}$ induces a smooth conjugacy 
\[
\widehat{h} : W \setminus \{\Gamma^+,\Gamma^-\} \rightarrow W \setminus \{\Gamma^+,\Gamma^-\}
\]
from $\psi_1^t$ to $\psi_2^t$, which is the dynamical continuation of the identity map on the projection 
\[
\widehat{\Sigma} := \bigl\{(\sigma,\beta)\in W \mid \sigma= {\textstyle \frac{m}{2}}, \; \beta\in (0,\pi) \bigr\}
\]
of the Birkhoff annuli. The map $\widehat{h}$ has an obvious continuous extension to $W$ which fixes $\Gamma^+$ and $\Gamma^-$. If the $S^1$-equivariant diffeomorphism $h$ has a $C^2$ extension over $\Gamma_{g_1}$ then $\widehat{h}$ has a $C^2$ extension over the two critical points $\Gamma^+$ and $\Gamma^-$. We shall instead see that $\widehat{h}$ does not have a $C^2$ extension over $\Gamma^+$ (nor, by symmetry, over $\Gamma^-$). 

The smooth involution $\jmath: W \rightarrow W$ mapping $(\sigma,\beta)$ to $(m-\sigma,-\beta)$ satisfies $\jmath^* H_2 = H_1$ and $\jmath^* \omega_2 = \omega_1$, and hence
\[
\psi_2^t = \jmath \circ \psi_1^t \circ \jmath \qquad \forall t\in \R.
\]
Given $v=(\frac{m}{2},\theta,\beta)\in \Sigma_{g_1}$, $\beta\in (0,\pi)$, let $\gamma_v: \R \rightarrow S^2$ be the geodesic of the metric $g_1$ with $\dot\gamma_v(0)=v$. 
We denote by $\tau_N(\beta)$ the smallest positive number $t$ such that the point $\gamma_v(t)$ is again on the equator. The geodesic arc $\gamma_v([0,\tau_N(\beta)])$ is a maximal geodesic arc contained in the northern hemisphere. Similarly, we denote by $\tau_S(\beta)$ the smallest positive number $t$ such that $\gamma_v(-t)$ is on the equator. The geodesic arc $\gamma_v([-\tau_S(\beta),0])$ is a maximal geodesic arc contained in the southern hemisphere. For $\beta\rightarrow 0$, both $\tau_N(\beta)$ and $\tau_S(\beta)$ converge to the first conjugate instant along the equator (see Lemma \ref{l:extension}). The assumption that $r'''(\frac{m}{2})$ is not zero implies that $\tau_N(\beta)$ and $\tau_S(\beta)$ differ at first order for $\beta\rightarrow 0$. More precisely, there exists a non-zero real number $a$ such that
\begin{equation}
\label{e:first-order}
\lim_{\beta \rightarrow 0} \bigl( \tau_N'(\beta) - \tau_S'(\beta) \bigr) = a. 
\end{equation}
This can be proven by the first formula of Lemma \ref{l:calcoli} in the next section.

Let $\beta\in (0,\pi)$. Since $(\frac{m}{2},\beta)$ belongs to $\widehat{\Sigma}$, we have $\widehat{h}(\frac{m}{2},\beta) = (\frac{m}{2},\beta)$ and hence
\begin{equation}
\label{e:contra}
\partial_{\beta\beta} \widehat{h} \bigl( {\textstyle \frac{m}{2} }, \beta\bigr) = 0 \qquad \forall \beta\in (0,\pi).
\end{equation}
Now fix some $\beta\in (-\pi,0)$. By the definition of $\tau_S$ we have
\[
\psi_1^{\tau_S(-\beta)} \bigl( {\textstyle \frac{m}{2} }, \beta\bigr) =  \bigl( {\textstyle \frac{m}{2} }, -\beta\bigr) \in \widehat{\Sigma},
\]
Using also the identity
\[
\psi_1^{\tau_N(-\beta)} \bigl( {\textstyle \frac{m}{2} }, - \beta\bigr) =  \bigl( {\textstyle \frac{m}{2} }, \beta\bigr),
\]
we find
\[
\begin{split}
\widehat{h}  \bigl( {\textstyle \frac{m}{2} },  \beta\bigr) &= \psi_2^{-\tau_S(-\beta)} \bigl( {\textstyle \frac{m}{2} }, -\beta\bigr) = \jmath \circ  \psi_1^{-\tau_S(-\beta)} \bigl( {\textstyle \frac{m}{2} }, \beta\bigr) \\ &=  \jmath \circ  \psi_1^{\tau_N(-\beta) -\tau_S(-\beta)} \circ \psi_1^{-\tau_N(-\beta)} \bigl( {\textstyle \frac{m}{2} }, \beta\bigr) \\ &= \jmath \circ \psi_1^{\tau_N(-\beta) -\tau_S(-\beta)}  \bigl( {\textstyle \frac{m}{2} }, -\beta\bigr) = \psi_2^{\tau_N(-\beta) -\tau_S(-\beta)} \bigl( {\textstyle \frac{m}{2} }, \beta).
\end{split}
\]
A first differentiation in $\beta$ gives us the identity
\[
\partial_{\beta} \widehat{h}  \bigl( {\textstyle \frac{m}{2} }, \beta\bigr) = -\bigl( \tau_N'(-\beta) - \tau_S'(-\beta) \bigr) X_2 ( \widehat{h}  \bigl( {\textstyle \frac{m}{2} }, \beta ) \bigr) - \partial_{\beta} \Psi \bigl( {\textstyle \frac{m}{2} }, \beta\bigr),
\]
where $\Psi=\psi_2^{\tau_N(-\beta) - \tau_S(-\beta)}$ and  
\[
X_2(\sigma,\beta) = \sin \beta\, \partial_{\sigma} - \frac{r'(m-\sigma)}{r(m-\sigma)} \cos \beta\,\partial_{\beta}
\]
denotes the Hamiltonian vector field of $(H_2,\omega_2)$. By \eqref{e:first-order} and by the identity $dX_2(\frac{m}{2},0) \partial_{\beta} = \partial_{\beta}$, a second differentiation in $\beta$ and a limit for $\beta\rightarrow 0$ produce
\[
\lim_{\beta\downarrow 0} \partial_{\beta\beta} \widehat{h}  \bigl( {\textstyle \frac{m}{2} }, \beta\bigr) = - 2 a \, \partial_{\beta}.
\]
Comparing the above limit with \eqref{e:contra}, we conclude that $\widehat{h}$ does not have a $C^2$ extension at $\Gamma^+$.
\end{example}

\section{Continuous extension of the conjugacy at the equator}
\label{s:continuous}

The aim of this section is to prove the following result.

\begin{proposition}
\label{p:non-neg-curv}
Assume that the metrics $g_1,g_2\in \mathcal{G}$ are isospectral and have non-negative curvature near their equator. Then the $S^1$-equivariant contactomorphism  $h: T^1_{g_1} S^2 \setminus \Gamma_{g_1} \rightarrow T^1_{g_2} S^2 \setminus \Gamma_{g_2}$ which is constructed in Section \ref{s:away} extends to a homeomorphism from $T^1_{g_1} S^2$ to $T^1_{g_2} S^2$ which conjugates the two geodesic flows.
\end{proposition}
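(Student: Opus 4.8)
The plan is to extend $h$ by sending $\Gamma^\pm_{g_1}$ onto $\Gamma^\pm_{g_2}$ and to prove that this extension is continuous; bijectivity and the homeomorphism property will then follow from a compactness argument. First I observe that, being an $S^1$-equivariant contactomorphism, $h$ preserves Clairaut's first integral: for $v\in T^1_{g_1}S^2\setminus\Gamma_{g_1}$ one computes $C_{g_2}(h(v))=\alpha_{g_2}(V_{g_2})|_{h(v)}=(h^*\alpha_{g_2})(V_{g_1})|_v=\alpha_{g_1}(V_{g_1})|_v=C_{g_1}(v)$. Hence $h$ maps each invariant torus $C_{g_1}^{-1}(c)$ onto $C_{g_2}^{-1}(c)$, and since $\max C=r_{\max}$, $\min C=-r_{\max}$ with the same $r_{\max}=\ell/2\pi$ for both metrics, it sends a punctured neighbourhood of $\Gamma^+_{g_1}$ into one of $\Gamma^+_{g_2}$, and likewise for $\Gamma^-$. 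This already settles the $(\sigma,\beta)$-coordinates: writing $C=r(\sigma)\cos\beta$ as in \eqref{e:Clairaut}, for $c$ close to $r_{\max}$ the level set $C_{g_2}^{-1}(c)$ is confined to an arbitrarily small neighbourhood of the circle $\Gamma^+_{g_2}$ (where $\sigma=\sigma_{\max}$, $\beta=0$); thus if $v\to v_\infty\in\Gamma^+_{g_1}$, so that $C_{g_1}(v)\to r_{\max}$, the $(\sigma,\beta)$-coordinates of $h(v)$ converge to those of $\Gamma^+_{g_2}$. By $S^1$-equivariance it therefore suffices to control the longitude coordinate $\theta(h(v))$ and to show that it converges to $\theta(v_\infty)$.

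For this I use the flow-box description of $h$ near $\Gamma^+$. Every $v$ close to $\Gamma^+_{g_1}$ can be written as $v=\phi^s_{g_1}(w)$ with $w=(\theta_0,\eta)\in\Sigma_{g_1}$ and $s\in\R$, and since $h$ restricts to the identification $h_0$ on the Birkhoff annuli (which is the identity in the coordinates $(\theta,\eta)$) and conjugates the flows, we get $h(v)=\phi^s_{g_2}(w')$ with $w'=(\theta_0,\eta)\in\Sigma_{g_2}$. Denoting by $\Delta\theta_i(s)$ the longitude increment along the $g_i$-geodesic issued from $\Sigma$ up to time $s$, this gives
\[
\theta(h(v))-\theta(v)=\Delta\theta_2(s)-\Delta\theta_1(s)=:D(s;\eta).
\]
As $\theta(v)\to\theta(v_\infty)$, continuity at $\Gamma^+$ reduces to the uniform estimate $\sup_{s\in\R}|D(s;\eta)|\to 0$ as $\eta\uparrow 1$. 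From Clairaut's relation $\cos\beta_i=c/r_i(\sigma_i)$ together with \eqref{e:Reeb}, which yields $\dot\theta_i=\cos\beta_i/r_i=\cos^2\beta_i/c$, this discrepancy takes the form $D(s;\eta)=\tfrac1c\int_0^s(\cos^2\beta_2-\cos^2\beta_1)\,dt'$. By construction $D$ vanishes whenever $s$ is an integer multiple of the common first return time $\tau(\eta)$, since there both orbits meet $\Sigma$ at the same point; the whole difficulty lies in bounding $D$ on the intervening time intervals.

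The main obstacle is precisely this long-time control. When the curvature vanishes at the equator, Lemma \ref{l:gen_fun}(vii) gives $\Theta(\eta)=F'(\eta)\to+\infty$ and hence $\tau(\eta)\to+\infty$ as $\eta\uparrow 1$, so the two orbits wind an unbounded number of times about the $z$-axis before closing up; a naive term-by-term bound is hopeless, and one must exploit the exact synchronisation at every return to $\Sigma$. I would carry this out by passing from the time variable to the meridian coordinate $\sigma$ and using the explicit integral expressions for the longitude shift and the elapsed time,
\[
\Delta\theta_i=\int\frac{c\,d\sigma}{r_i\sqrt{r_i^2-c^2}},\qquad \Delta t_i=\int\frac{r_i\,d\sigma}{\sqrt{r_i^2-c^2}},
\]
taken along each excursion between consecutive turning points (these are the formulas recorded in Lemma \ref{l:calcoli} below). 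The non-negativity of the curvature near the equator, i.e.\ the concavity $r_i''\le 0$ of the profile functions near $\sigma_{\max}$, is essential here: it forces $\sigma\mapsto r_i(\sigma)$ to be strictly monotone with controlled derivative on each side of $\sigma_{\max}$, so that the turning points depend regularly on $c$ and the excursions $\int(r_{\max}-r_i)\,dt'$ remain uniformly small relative to $\tau(\eta)$, whence $\sup_s|D(s;\eta)|\to 0$. This is the dynamical expression of the Lyapunov stability of the equator as a closed geodesic, and it genuinely fails once the curvature becomes negative nearby, as the examples of Section \ref{s:unstable} show.

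Finally, with the uniform estimate in hand the extension of $h$ is continuous at $\Gamma^\pm$, maps $\Gamma^\pm_{g_1}$ bijectively onto $\Gamma^\pm_{g_2}$ by the identity in the longitude coordinate $\theta$, and intertwines the two geodesic flows there, since each acts on these circles as the rotation $\theta\mapsto\theta+t/r_{\max}$. The extended $h$ is thus a continuous bijection between the compact Hausdorff manifolds $T^1_{g_1}S^2$ and $T^1_{g_2}S^2$, and therefore a homeomorphism; the conjugacy identity $h\circ\phi^t_{g_1}=\phi^t_{g_2}\circ h$, already valid off $\Gamma_{g_1}$, extends to all of $T^1_{g_1}S^2$ by continuity and density.
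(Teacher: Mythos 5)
Your proposal follows essentially the same route as the paper's proof: conservation of Clairaut's first integral settles the $(\sigma,\beta)$-coordinates, continuity at $\Gamma^{\pm}$ is reduced to a uniform longitude estimate over one common return period (using that the two flows resynchronize on $\Sigma$ at each return, thanks to the common $\tau$ and $\Theta$ from Proposition \ref{p:equalF}), and that estimate is precisely the paper's Lemma \ref{l:stability}, which is proved exactly by the mechanism you indicate -- the integral formulas of Lemma \ref{l:calcoli} together with the concavity of the profile function forced by non-negative curvature, comparing each flow to the linear drift $\frac{\cos \beta_0}{r_{\max}}\, t$ and bounding the deviation by a constant times $\cos\beta_0 \sin\beta_0$ via the mean value theorem. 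One caveat on your phrasing: what is needed is that each deviation be \emph{absolutely} small (it is $O(\sin\beta_0)$), not merely ``uniformly small relative to $\tau(\eta)$'', since the Clairaut constant multiplying the excursion integral tends to $r_{\max}$ rather than to $0$, so an $o(\tau(\eta))$ bound alone would not suffice -- but this is an imprecision of the sketch, not a flaw in the scheme.
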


Assume that the curvature of $g\in \mathcal{G}$  does not vanish to infinite order at the equator $\{\sigma=\sigma_{\max}\}$. By \eqref{e:curvature}, the function $r''$ does not vanish to infinite order at $\sigma_{\max}$. Using also the fact that $r$ has a maximum at $\sigma_{\max}$, we deduce that the first natural number $k\geq 1$ such that the $k$-th derivative of $r$ at $\sigma_{\max}$ is not zero is even, and $r^{(k)}(\sigma_{\max}) < 0$. This implies that $r''$ is non-positive in a neighborhood of $\sigma_{\max}$, and hence the curvature of $g$ is non-negative near its equator. Therefore, Proposition \ref{p:non-neg-curv} implies statement (ii) of Theorem \ref{thmA}, provided that we show that the condition of having curvature which does not vanish to infinite order at the equator is preserved by the isospectrality equivalence relation. The latter assertion follows from the more general Proposition \ref{p:order}, which is presented at end of the next section.

Let $g$ be a metric in $\mathcal{G}$ with profile function $r:[0,m]\rightarrow [0,+\infty)$ achieving its maximum $r_{\max}$ at $\sigma_{\max}$. Then $r$ is strictly increasing on $[0,\sigma_{\max}]$ and strictly decreasing on $[\sigma_{\max},m]$, and we can consider the inverse functions
\begin{equation}
\label{e:inverses}
\begin{split}
\sigma_S &:= r|_{[0,\sigma_{\max}]}^{-1} : [0,r_{\max}] \rightarrow [0,\sigma_{\max}], \\
\sigma_N &:= r|_{[\sigma_{\max},m]}^{-1} : [0,r_{\max}] \rightarrow [\sigma_{\max},m].
\end{split}
\end{equation}
Let $v$ be a vector in the Birkhoff annulus $\Sigma$ forming an angle $\beta_0 \in (0,\pi) \setminus \{\frac{\pi}{2}\}$ with the positive direction of the equator. In geodesic polar coordinates, we have $v=(\sigma_{\max},\theta_0,\beta_0)$, and we denote the orbit of $v$ by the geodesic flow by
\[
\phi^t(v) = \bigl(\sigma(t),\theta(t),\beta(t) \bigr),
\]
and the geodesic starting at $v$ by $\gamma(t) = \pi \circ \phi^t(v)$. Let $\tau_1=\tau_1(\beta_0)$ be the first positive instant $t$ at which $\gamma$ is tangent to a parallel, i.e., the first $t>0$ such that $\cos \beta(t) = \pm 1$. In particular, the geodesic arc $\gamma([0,\tau_1])$ is contained in the northern hemisphere, and we obtain the following expressions for its length and the variation of $\theta$ along it. 

\begin{lemma}
\label{l:calcoli}
For every $t_0\in [0,\tau_1]$ we have:
\[
\begin{split}
\mathrm{length}(\gamma|_{[0,t_0]}) = t_0 = - \frac{r_{\max}}{2} \int_{\frac{\cos^2 \beta_0}{\cos^2 \beta (t_0)}}^1 \frac{\sigma_N'(r_{\max} \sqrt{u})}{\sqrt{u-\cos^2 \beta_0}} \, du, \\
\theta(t_0) - \theta(0) = - \frac{\cos \beta_0}{2}  \int_{\frac{\cos^2 \beta_0}{\cos^2 \beta (t_0)}}^1 \frac{1}{u} \frac{\sigma_N'(r_{\max} \sqrt{u})}{\sqrt{u-\cos^2 \beta_0}} \, du.
\end{split}
\]
\end{lemma}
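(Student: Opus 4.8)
The plan is to integrate the geodesic equations by separation of variables, using Clairaut's first integral to eliminate $\beta$, and then to perform the substitution $u = r(\sigma)^2/r_{\max}^2$. First I would recall from \eqref{e:glob_coord} that along the orbit $\phi^t(v) = (\sigma(t),\theta(t),\beta(t))$ one has $\dot\sigma = \sin\beta$ and $\dot\theta = \cos\beta/r(\sigma)$, while by \eqref{e:Clairaut} the Clairaut integral $C = r(\sigma)\cos\beta$ is constant and equal to its value $r_{\max}\cos\beta_0$ at $t=0$. On the interval $[0,\tau_1]$ the geodesic lies in the northern hemisphere and, since $\beta(t)\in(0,\pi)$ for $t\in(0,\tau_1)$ by the definition of $\tau_1$, we have $\dot\sigma = \sin\beta > 0$ there; hence $t\mapsto\sigma(t)$ is a strictly increasing diffeomorphism of $[0,\tau_1]$ onto $[\sigma_{\max},\sigma_2]$, where $\sigma_2 = \sigma_N(|C|)$ is the unique turning parallel, so that $\sigma$ may legitimately be used as the integration variable.

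Using $\cos\beta = C/r(\sigma)$ and $\sin\beta = \sqrt{1-C^2/r(\sigma)^2}$ (positive on $(0,\tau_1)$) I would rewrite the two equations as
\[
dt = \frac{d\sigma}{\sin\beta} = \frac{r(\sigma)\,d\sigma}{\sqrt{r(\sigma)^2 - C^2}}, \qquad d\theta = \frac{C}{r(\sigma)^2}\,dt = \frac{C\,d\sigma}{r(\sigma)\sqrt{r(\sigma)^2 - C^2}}.
\]
Since the geodesic is parametrized by arc length, the length of $\gamma|_{[0,t_0]}$ equals $t_0$, so integrating the first identity from $\sigma_{\max}$ to $\sigma(t_0)$ expresses $t_0$ as a $\sigma$-integral, and integrating the second expresses $\theta(t_0)-\theta(0)$.

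Then I would carry out the change of variable $u = r(\sigma)^2/r_{\max}^2$, for which $r(\sigma) = r_{\max}\sqrt u$, $\sigma = \sigma_N(r_{\max}\sqrt u)$ and $d\sigma = \tfrac{r_{\max}}{2\sqrt u}\,\sigma_N'(r_{\max}\sqrt u)\,du$, while $\sqrt{r(\sigma)^2 - C^2} = r_{\max}\sqrt{u - \cos^2\beta_0}$. The value of $u$ at the upper endpoint $\sigma(t_0)$ is, using $\cos\beta(t_0) = C/r(\sigma(t_0))$, exactly $\cos^2\beta_0/\cos^2\beta(t_0)$, whereas at $\sigma_{\max}$ it equals $1$; since $u$ decreases from $1$ along the arc, reversing the orientation of the integral produces the overall minus signs. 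After simplification the $\sqrt u$ factors cancel in the length integral and combine to $1/u$ in the $\theta$-integral, yielding precisely the two stated formulas.

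The computation is essentially routine; the only points requiring care are the justification that $\sigma$ is a legitimate integration variable on $[0,\tau_1]$ (the monotonicity of $\sigma(t)$, settled above) and the behaviour of the integrals at the turning value $u = \cos^2\beta_0$ reached when $t_0 = \tau_1$. At that endpoint the integrand has an integrable singularity of type $(u - \cos^2\beta_0)^{-1/2}$, so both improper integrals converge and the formulas remain valid for $t_0 = \tau_1$. This is the only genuine subtlety, and it is a mild one.
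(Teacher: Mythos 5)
Your proposal is correct and follows essentially the same route as the paper: conservation of Clairaut's integral to express $\cos\beta$ and $\sin\beta$ in terms of $r(\sigma)$, the change of variable $s=\sigma(t)$ (justified by $\dot\sigma=\sin\beta>0$ on $(0,\tau_1)$), and then the substitution $u=r(s)^2/r_{\max}^2$, i.e.\ $s=\sigma_N(r_{\max}\sqrt u)$, with the same endpoint identification $u(t_0)=\cos^2\beta_0/\cos^2\beta(t_0)$. Your added remarks on the monotonicity of $\sigma(t)$ and the integrable $(u-\cos^2\beta_0)^{-1/2}$ singularity at $t_0=\tau_1$ are sound refinements of points the paper leaves implicit (note only that $\sigma$ is a strictly increasing homeomorphism, not a diffeomorphism up to $t_0=\tau_1$, since $\dot\sigma(\tau_1)=0$).
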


\begin{proof}
The conservation of the Clairaut first integral gives us the identity
\begin{equation}
\label{e:cla}
r(\sigma(t)) \cos \beta(t) = r_{\max} \cos \beta_0 \qquad \forall t\in \R.
\end{equation}
For every $t\in [0,\tau_1]$ the quantity $\sin \beta(t)$ is non-negative, and by \eqref{e:Reeb} and \eqref{e:cla} we have
\[
\sigma'(t) = \sin \beta(t) = \sqrt{1 - \cos^2 \beta(t)} = \sqrt{ 1 - \frac{r_{\max}^2 \cos^2 \beta_0}{r(\sigma(t))^2} }.
\]
Therefore, the change of variable $s=\sigma(t)$ gives us
\begin{equation}
\label{e:tempo}
 \mathrm{length}(\gamma|_{[0,t_0]}) = t_0 = \int_0^{t_0} dt = \int_{\sigma_{\max}}^{\sigma(t_0)} \left( 1 - \frac{r_{\max}^2 \cos^2 \beta_0}{r(s)^2}  \right)^{-\frac{1}{2}} \, ds.
\end{equation}
A second change of variable $u = \frac{r(s)^2}{r_{\max}^2}$, which is equivalent to $s = \sigma_N(r_{\max} \sqrt{u})$ because $\sigma(t_0)\in [\sigma_{\max},m]$, yields
\[
\begin{split}
 \mathrm{length}(\gamma|_{[0,t_0]}) &= \int_{1}^{\frac{\cos^2 \beta_0}{\cos^2 \beta (t_0)}} \left( 1 - \frac{\cos^2 \beta_0}{u}  \right)^{-\frac{1}{2}} r_{\max} \frac{\sigma_N'(r_{\max} \sqrt{u})}{2\sqrt{u}} \, du,  \\ &= 
 - \frac{r_{\max}}{2} \int_{\frac{\cos^2 \beta_0}{\cos^2 \beta (t_0)}}^1 \frac{\sigma_N'(r_{\max} \sqrt{u})}{\sqrt{u-\cos^2 \beta_0}} \, du,
 \end{split}
 \]
 proving the first identity. Thanks to \eqref{e:Reeb} and \eqref{e:cla}, we have
 \[
 \begin{split}
 \theta(t_0) - \theta(0) &= \int_0^{t_0} \theta'(t)\, dt = \int_0^{t_0} \frac{\cos \beta(t)}{r(\sigma(t))} \, dt =  \int_0^{t_0} \frac{r(\sigma(t))\cos \beta(t)}{r(\sigma(t))^2} \, dt \\ &= r_{\max} \cos \beta_0 \int_0^{t_0} \frac{1}{r(\sigma(t))^2} \, dt.
 \end{split}
 \]
 Using again the change of variable $s=\sigma(t)$, we find
 \[
 \int_0^{t_0} \frac{1}{r(\sigma(t))^2} \, dt =  \int_{\sigma_{\max}}^{\sigma(t_0)} \frac{1}{r(s)^2} \left( 1 - \frac{r_{\max}^2 \cos^2 \beta_0}{r(s)^2}  \right)^{-\frac{1}{2}} \, ds.
\]
By the change of variable $u = \frac{r(s)^2}{r_{\max}^2}$, the above integral equals
\[
\begin{split}
& \int_1^{\frac{\cos^2 \beta_0}{\cos^2 \beta(t_0)}} \frac{1}{r_{\max}^2 u} \left( 1 - \frac{\cos^2 \beta_0}{u} \right)^{-\frac{1}{2}} r_{\max} \frac{\sigma_N'(r_{\max} \sqrt{u})}{2\sqrt{u}} \, du,  \\ &= 
 - \frac{1}{2 r_{\max}} \int_{\frac{\cos^2 \beta_0}{\cos^2 \beta (t_0)}}^1 \frac{1}{u} \frac{\sigma_N'(r_{\max} \sqrt{u})}{\sqrt{u-\cos^2 \beta_0}} \, du,
 \end{split}
 \]
 and we conclude that
 \[
 \theta(t_0) - \theta(0) = - \frac{\cos \beta_0}{2}  \int_{\frac{\cos^2 \beta_0}{\cos^2 \beta (t_0)}}^1 \frac{1}{u} \frac{\sigma_N'(r_{\max} \sqrt{u})}{\sqrt{u-\cos^2 \beta_0}} \, du.
 \qedhere
\]
\end{proof}

When $\beta_0$ converges to $0$, the vector $v=(\sigma_{\max},\theta_0,\beta_0)\in \Sigma$ tends to the vector $w=(\sigma_{\max},\theta_0,0)$, which is tangent to the equator and whose orbit by the geodesic flow is
\[
\phi^t(w) = \Bigl( \sigma_{\max}, \theta_0 +  {\textstyle \frac{1}{r_{\max}}} t , 0 \Bigr).
\]
The next lemma is an immediate consequence of the continuity of the geodesic flow if the first return time $\tau$ to the Birkhoff annulus $\Sigma$ is bounded, i.e., if the curvature of $g$ at the equator is positive. In general, it gives us a stability property over possibly long time intervals of the orbit corresponding to the equator. 

\begin{lemma}
\label{l:stability}
If the curvature of $g$ near the equator is non-negative, then:
\[
\lim_{\beta_0\downarrow 0} \max_{t\in [0,\tau(v)]}  \bigl| \theta(t) - \theta(0) - {\textstyle \frac{\cos \beta_0}{r_{\max}}} t \bigr| = 0.
\]
\end{lemma}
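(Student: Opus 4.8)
The plan is to reduce the uniform estimate to its endpoint and then control a single integral by convexity. Since the statement concerns the limit $\beta_0\downarrow 0$, I may assume $\beta_0\in(0,\tfrac\pi2)$ and write $c:=\cos\beta_0\in(0,1)$, so that $c\uparrow 1$. By the conservation of Clairaut's integral \eqref{e:cla} we have $\cos\beta(t)=r_{\max}c/r(\sigma(t))>0$ along the whole orbit, and since $r\le r_{\max}$ this yields the two-sided pinch $r_{\max}c\le r(\sigma(t))\le r_{\max}$ for all $t$. From \eqref{e:Reeb} and \eqref{e:cla}, $\theta'(t)=\cos\beta(t)/r(\sigma(t))=r_{\max}c/r(\sigma(t))^2$, so the function
\[
D(t):=\theta(t)-\theta(0)-\tfrac{c}{r_{\max}}t
\]
satisfies $D(0)=0$ and $D'(t)=\tfrac{c}{r_{\max}}\bigl(r_{\max}^2/r(\sigma(t))^2-1\bigr)\ge 0$. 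Hence $D$ is non-decreasing and non-negative on $[0,\tau(v)]$, and therefore $\max_{t\in[0,\tau(v)]}|D(t)|=D(\tau(v))$. It suffices to prove $D(\tau(v))\to 0$.

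Next I would turn this into an integral I can estimate. Integrating $D'$ and using $r\ge r_{\max}c$,
\[
D(\tau(v))=\tfrac{c}{r_{\max}}\int_0^{\tau(v)}\frac{r_{\max}^2-r(\sigma(s))^2}{r(\sigma(s))^2}\,ds\le\frac{1}{r_{\max}^3 c}\,A(c),\qquad A(c):=\int_0^{\tau(v)}\bigl(r_{\max}^2-r(\sigma(s))^2\bigr)\,ds .
\]
On $[0,\tau(v)]$ the orbit performs a northern excursion up to $\sigma_2:=\sigma_N(r_{\max}c)$ and back, and a southern excursion down to $\sigma_1:=\sigma_S(r_{\max}c)$ and back (cf.\ \eqref{e:inverses}); on each monotone piece $ds=r\,d\sigma/\sqrt{r^2-r_{\max}^2c^2}$, so that
\[
A(c)=2\int_{\sigma_{\max}}^{\sigma_2}\frac{(r_{\max}^2-r^2)\,r}{\sqrt{r^2-r_{\max}^2c^2}}\,d\sigma+2\int_{\sigma_1}^{\sigma_{\max}}\frac{(r_{\max}^2-r^2)\,r}{\sqrt{r^2-r_{\max}^2c^2}}\,d\sigma .
\]
Because $c\to 1$, both $\sigma_1,\sigma_2\to\sigma_{\max}$, so the two intervals shrink to the equator.

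The estimate then comes from the hypothesis of non-negative curvature, i.e.\ $r''\le 0$ near $\sigma_{\max}$ by \eqref{e:curvature}: for $c$ close to $1$ the function $r$ is concave on $[\sigma_{\max},\sigma_2]$, with $r(\sigma_{\max})=r_{\max}$ and $r(\sigma_2)=r_{\max}c$. Writing $\delta:=\sigma_2-\sigma_{\max}$ and comparing the concave graph with its chord gives simultaneously $r_{\max}-r(\sigma)\le\tfrac{r_{\max}(1-c)}{\delta}(\sigma-\sigma_{\max})$ and $r(\sigma)-r_{\max}c\ge\tfrac{r_{\max}(1-c)}{\delta}(\sigma_2-\sigma)$. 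Inserting these (together with $r\le r_{\max}$ and $r+r_{\max}c\ge 2r_{\max}c$) bounds the northern integrand by a constant multiple of $\sqrt{\tfrac{1-c}{\delta}}\cdot\tfrac{\sigma-\sigma_{\max}}{\sqrt{\sigma_2-\sigma}}$, and since $\int_{\sigma_{\max}}^{\sigma_2}\tfrac{\sigma-\sigma_{\max}}{\sqrt{\sigma_2-\sigma}}\,d\sigma=\tfrac43\delta^{3/2}$, the northern term is $O(\sqrt{1-c}\,\delta)$. The southern integral is treated identically. Hence $A(c)=O(\sqrt{1-c}\,\delta)\to 0$ as $c\to 1$, so $D(\tau(v))\le r_{\max}^{-3}c^{-1}A(c)\to 0$, which is the claim.

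The point I expect to be delicate is precisely the interplay between the two vanishing factors when the curvature degenerates at the equator. In that case Lemma \ref{l:extension} gives $\tau(v)\to+\infty$, so the crude bound $D(\tau(v))\le\tfrac{1-c^2}{r_{\max}c}\,\tau(v)$ is an indeterminate $0\cdot\infty$ and is useless. The concavity of $r$ is exactly what is needed to convert the (possibly unbounded) growth of the return time into the uniform decay $O(\sqrt{1-c}\,\delta)$, and crucially this argument makes no assumption on the order of vanishing of $r''$ at $\sigma_{\max}$, so it handles infinite-order degeneracy as well.
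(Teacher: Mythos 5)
Your proof is correct, and it reaches the conclusion by a route that differs from the paper's in two useful ways. The paper works with the explicit formulas of Lemma \ref{l:calcoli}: it writes $\theta(t)-\theta(0)-\frac{\cos\beta_0}{r_{\max}}t$ as an integral in the variable $u=r(s)^2/r_{\max}^2$, bounds the integrand factor $\bigl(1-\frac{1}{u}\bigr)\sigma_N'(r_{\max}\sqrt{u})$ by a constant using the mean value theorem together with concavity of $r$ (obtaining the bound $c\cos\beta_0\sin\beta_0$ on $[0,\tau_1]$), and then extends the uniform estimate from $[0,\tau_1]$ to all of $[0,\tau(v)]$ by two symmetry arguments: the reflection identity $\theta(\tau_1+t)=2\theta(\tau_1)-\theta(\tau_1-t)$ for the descending northern arc, and an appeal to the reversed profile $\sigma\mapsto r(m-\sigma)$ for the southern excursion. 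You instead observe that $D(t)=\theta(t)-\theta(0)-\frac{c}{r_{\max}}t$ is monotone non-decreasing (since $\theta'=r_{\max}c/r^2\ge c/r_{\max}$ by Clairaut), which collapses the uniform estimate to the single endpoint value $D(\tau(v))$ and makes all symmetry bookkeeping unnecessary; you then bound the resulting four monotone-piece integrals by a two-sided chord comparison for the concave profile — the upper bound $r_{\max}-r\le\frac{r_{\max}(1-c)}{\delta}(\sigma-\sigma_{\max})$ and the lower bound $r-r_{\max}c\ge\frac{r_{\max}(1-c)}{\delta}(\sigma_2-\sigma)$ — which is the same chord estimate the paper itself uses in Remark \ref{r:control} to control $\tau_1$, though there only in the lower-bound direction. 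What each approach buys: your monotonicity reduction is a genuine simplification of the structure of the proof and yields a slightly sharper asymptotic bound, $O(\sqrt{1-c}\,\delta)$ with $\delta\to 0$, versus the paper's $O(\cos\beta_0\sin\beta_0)$ (comparable in $\beta_0$ but without the extra vanishing factor $\delta$); the paper's pointwise integrand bound, on the other hand, gives uniform-in-$t$ control directly along the arc, which fits its presentation via Lemma \ref{l:calcoli}. Your closing diagnosis is also accurate: the crude bound $D(\tau(v))\le\frac{1-c^2}{r_{\max}c}\tau(v)$ is indeed useless when the curvature degenerates at the equator, and the concavity input is precisely what converts the unbounded return time into a decaying estimate independently of the order of vanishing of $r''$ at $\sigma_{\max}$.
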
 

\begin{proof}
Assume first that $t$ is in the interval $[0,\tau_1(\beta_0)]$, where $\tau_1(\beta_0)$ is the first positive $t$ such that $\cos \beta(t) = \pm 1$. By expressing both $\theta(t) - \theta(0)$ and $t$ by the integrals of Lemma \ref{l:calcoli}, we obtain the identity
\[
\theta(t) - \theta(0) -  \frac{\cos \beta_0}{r_{\max}} t = \frac{\cos \beta_0}{2} \ \int_{\frac{\cos^2 \beta_0}{\cos^2 \beta (t)}}^1 \left( 1 - \frac{1}{u} \right) \frac{\sigma_N'(r_{\max} \sqrt{u})}{\sqrt{u-\cos^2 \beta_0}} \, du,
\]
and hence the upper bound
\begin{equation}
\label{e:ub}
\max_{t\in [0,\tau_1]}  \bigl| \theta(t) - \theta(0) - {\textstyle \frac{\cos \beta_0}{r_{\max}}} t \bigr|  \leq \frac{|\cos \beta_0|}{2} \int_{\cos^2 \beta_0}^1 \left( 1 - \frac{1}{u} \right) \frac{\sigma_N'(r_{\max} \sqrt{u})}{\sqrt{u-\cos^2 \beta_0}} \, du.
\end{equation}
Let $\sigma_1\in (\sigma_{\max},m)$ be such that the curvature $K(\sigma,\theta)$ is non-negative for every $\sigma\in [\sigma_{\max},\sigma_1]$, or equivalently, $r$ is concave on $[\sigma_{\max},\sigma_1]$. Let $\beta_1\in (0,\pi)$ be such that
\[
\sigma_N(r_{\max} \sqrt{u}) \in [\sigma_{\max},\sigma_1] \qquad \forall u\in [\cos^2 \beta_1,1].
\]
We claim that there exists $c>0$ such that
\begin{equation}
\label{e:claim}
0 \leq  \left( 1 - \frac{1}{u} \right) \sigma_N'(r_{\max} \sqrt{u}) \leq c \qquad \forall u\in [\cos^2 \beta_1,1).
\end{equation}
Indeed, for $u$ in this interval we can set 
\[
s:=\sigma_N(r_{\max} \sqrt{u})\in (\sigma_{\max},\sigma_1],
\]
or, equivalently $u = \frac{r(s)^2}{r_{\max}^2}$, and obtain 
\[
\left( 1 - \frac{1}{u} \right) \sigma_N'(r_{\max} \sqrt{u}) = \left( 1 - \frac{r_{\max}^2}{r(s)^2} \right) \frac{1}{r'(s)} = \frac{r(s)+r_{\max}}{r(s)^2} \frac{r(s)-r(\sigma_{\max})}{r'(s)}.
\]
The function
\begin{equation}
\label{e:easy}
s\mapsto \frac{r(s)+r_{\max}}{r(s)^2}
\end{equation}
is bounded on the interval $[\sigma_{\max},\sigma_1]$, as here $r$ is bounded away from zero. By the mean value theorem, there exists $s_1\in (\sigma_{\max},s)$ such that 
\[
\frac{r(s)-r(\sigma_{\max})}{r'(s)} = \frac{r'(s_1)}{r'(s)} (s-\sigma_{\max}).
\]
Since $r$ is concave on $[\sigma_{\max},\sigma_1]$, we have $r'(s_1) \geq r'(s)$. Together with the fact that $r'(s)$ is negative, we deduce that
\[
0 \leq \frac{r'(s_1)}{r(s)} \leq 1,
\]
and hence
\[
0 \leq \frac{r(s) - r(\sigma_{\max})}{r'(s)} \leq s-\sigma_{\max} \leq \sigma_1 - \sigma_{\max}.
\]
Together with the fact that the function \eqref{e:easy} is bounded, this proves the existence of a bound of the form \eqref{e:claim}, as claimed.

The inequalities \eqref{e:ub} and \eqref{e:claim} imply the following bound for every $\beta_0 \in (0,\beta_1]$:
\[
\max_{t\in [0,\tau_1]}  \bigl| \theta(t) - \theta(0) - {\textstyle \frac{\cos \beta_0}{r_{\max}}} t \bigr|  \leq \frac{\cos \beta_0}{2} \cdot c \int_{\cos^2 \beta_0}^1 \frac{1}{\sqrt{u-\cos^2 \beta_0}} \, du = c \cos \beta_0 \sin \beta_0,
\]
and hence
\begin{equation}
\label{e:partial}
\lim_{\beta_0\downarrow 0} \max_{t\in [0,\tau_1(\beta_0)]}  \bigl| \theta(t) - \theta(0) - {\textstyle \frac{\cos \beta_0}{r_{\max}}} t \bigr| = 0.
\end{equation}
By the $S^1$-symmetry, $t=2 \tau_1(\beta_0)$ is the first positive time for which $\sigma(t)=\sigma_{\max}$, and we have
\[
\begin{split}
\theta(\tau_1(\beta_0)+ t) &= 2 \theta(\tau_1(\beta_0)) - \theta(\tau_1(\beta_0)-t)
\qquad \forall t\in [0,\tau_1(\beta_0)], \\
\phi^{2\tau_1(\beta_0)} (\sigma_{\max},\theta_0,\beta_0) &= (\sigma_{\max},\theta(2\tau_1(\beta_0)),-\beta_0).
\end{split}
\]
By the first identity, the maximum in \eqref{e:partial} can be extended over the interval $[0,2\tau_1(\beta_0)]$. By the second identity, the analogous uniform limit over the interval $[2\tau(\beta_0),\tau(v)]$ follows from what we just proved by considering the metric with profile function $\sigma \mapsto r(m-\sigma)$.
\end{proof}

\begin{proof}[Proof of Proposition \ref{p:non-neg-curv}]
Recall that the conjugacy 
\[
h: T_{g_1}^1 S^2 \setminus \Gamma_{g_1} \rightarrow T_{g_2}^1 S^2 \setminus \Gamma_{g_2}
\]
from Section \ref{s:away} is defined by dynamical continuation starting from the diffeomorphism
\[
\Sigma_{g_1} \rightarrow \Sigma_{g_2} , \qquad (\sigma_{\max,1},\theta,\beta) \mapsto (\sigma_{\max,2},\theta,\beta), \qquad \forall (\theta,\beta)\in S^1 \times (0,\pi),
\]
where $\sigma_{\max,1}$ and $\sigma_{\max,2}$ are the maximizers of the profile functions $r_1$ and $r_2$ of $g_1$ and $g_2$. The above restriction of $h$ to $\Sigma_{g_1}$ has an obvious regular extension to $\overline{\Sigma}_{g_1} = \Sigma_{g_1} \cup \Gamma_{g_1}$, and we use this extension to define $h$ on the whole $T_{g_1}^1 S^2$. This map is clearly bijective and conjugates the two geodesic flows, so we only need to prove its continuity at $\Gamma_{g_1}$.

Let $(v_j)$ be a sequence in $T^1_{g_1} S^2$ converging to some $v\in \Gamma_{g_1}$, and assume without loss of generality that $v=(\sigma_{\max,1},\overline{\theta},0)$. We must prove that $h(v_j)$ converges to $(\sigma_{\max,2},\overline{\theta},0)$ in $T^1_{g_2} S^2$. Since the restriction of $h$ to $\Gamma_{g_1}$ is continuous, we may assume that $v_j$ is not in $\Gamma_{g_1}$ for every $j$. Then
\[
v_j = \phi_{g_1}^{t_j}(w_j)
\]
for some $w_j\in \Sigma_{g_1}$ and $t_j\in [0,\tau_{g_1}(w_j))$. Writing
\[
w_j = (\sigma_{\max,1}, \theta_j, \beta_j) \qquad \mbox{with } (\theta_j,\beta_j) \in S^1 \times (0,\pi),
\]
the conservation of Clairaut's first integral of $g_1$ gives us
\[
r_{\max} \cos \beta_j = C_{g_1}(w_j) = C_{g_1}(v_j) \rightarrow C_{g_1}(v) = r_{\max} \qquad \mbox{for } j\rightarrow \infty.
\]
Here, we are denoting by $r_{\max}$ the maximum of $r_1$, which coincides with the maximum of $r_2$ because the equators of both metrics have the same length $2\pi r_{\max}$. The above limit implies that  $(\beta_j)$ converges to $0$. By definition,
\[
h(v_j) = \phi_{g_2}^{t_j} ( h(w_j) )= \phi_{g_2}^{t_j} ( \sigma_{\max,2},\theta_j,\beta_j).
\]
The conservation of Clairaut's first integral of $g_2$ gives us
\[
r_2(\sigma(h(v_j))) \cos \beta (h(v_j)) = C_{g_2}(h(v_j)) = C_{g_2}(h(w_j)) = r_{\max} \cos \beta_j  \longrightarrow r_{\max} 
\]
for $j\rightarrow \infty$, and hence the $\sigma$-coordinate of $h(v_j)$ tends to $\sigma_{\max,2}$ and its $\beta$-coordinate tends to $0$. By applying Lemma \ref{l:stability} to the sequences $(w_j)\subset \Sigma_{g_1}$, $(h(w_j)) \subset \Sigma_{g_1}$, and $t_j \in [0, \tau_{g_1}(w_j)] = [0,\tau_{g_2}(h(w_j))]$,  we obtain
\[
\begin{split}
\overline{\theta} - \theta_j = \theta(\phi^{t_j}_{g_1}(w_j)) - \theta(w_j) &= \frac{\cos \beta_j}{r_{\max}} t_j + \epsilon_j \qquad  \mathrm{mod} \;  2\pi,\\
\theta(h(v_j)) - \theta_j &= \frac{\cos \beta_j}{r_{\max}} t_j + \epsilon_j' \qquad  \mathrm{mod} \;  2\pi,
\end{split}
\]
for suitable infinitesimal sequences $(\epsilon_j)$ and $(\epsilon_j')$. Therefore, 
\[
\theta(h(v_j)) - \overline{\theta}  = \epsilon_j' - \epsilon_j,
\]
and we conclude that $h(v_j)$ tends to $(\sigma_{\max,2}, \overline{\theta},0)$ in $T^1_{g_2} S^2$, as we wished to prove.
\end{proof}

\begin{remark}
\label{r:control}
Let $v_{\beta}\in \Sigma$ be a unit tangent vector based at the equator and forming an angle $\beta\in (0,\pi)$ with the positive direction of the equator. As discussed in Lemma \ref{l:extension}, if the curvature of $g$ vanishes along the equator then $\tau(v_{\beta})$ diverges to $+\infty$ for $\beta\downarrow 0$. The higher the vanishing order of the curvature along the equator, the faster this divergence occurs. However, if the curvature is assumed to be non-negative in a neighborhood of the equator, then this divergence has the universal upper bound
\begin{equation}
\label{e:controllato}
\tau(v_{\beta}) = o \left( \frac{1}{\beta} \right) \qquad \mbox{for } \beta\downarrow 0.
\end{equation}
This can be proven as follows. Using the notation introduced in this section, \eqref{e:tempo} gives us
\[
\tau_1(\beta) = \int_{\sigma_{\max}}^{\sigma_N( r_{\max} \cos \beta)} \frac{r(s)}{\sqrt{r(s)^2 - r_{\max}^2 \cos^2 \beta}}\, ds.
\]
By the bounds $r(s)\leq r_{\max}$ and $r(s)+r_{\max} \cos \beta \geq r_{\max} \cos \beta$, we obtain
\[
\begin{split}
\tau_1(\beta) &= \int_{\sigma_{\max}}^{\sigma_N( r_{\max} \cos \beta)} \frac{r(s)}{\sqrt{r(s) + r_{\max} \cos \beta}} \frac{1}{\sqrt{r(s) - r_{\max} \cos \beta}}\, ds \\ &\leq \sqrt{\frac{r_{\max}}{\cos \beta}} \int_{\sigma_{\max}}^{\sigma_N( r_{\max} \cos \beta)} \frac{1}{\sqrt{r(s) - r_{\max} \cos \beta}}\, ds,
\end{split}
\]
where we are assuming $\beta\in (0,\frac{\pi}{2})$. By the assumption on the sign of the curvature near the equator, $r$ is concave near $\sigma_{\max}$ and hence
\[
r(s) \geq r_{\max} - \frac{r_{\max} (1-\cos \beta)}{\sigma_N(r_{\max} \cos \beta) - \sigma_{\max}} (s-\sigma_{\max}) \qquad \forall s \in [\sigma_{\max}, \sigma_N(r_{\max} \cos \beta)],
\]
for $\beta>0$ small enough, and hence the last integral is bounded from above by 
\[
\begin{split}
\frac{1}{\sqrt{r_{\max}(1-\cos \beta)}} \int_{\sigma_{\max}}^{\sigma_N( r_{\max} \cos \beta)} \frac{1}{\sqrt{1 - \frac{s - \sigma_{\max}}{\sigma_N(r_{\max} \cos \beta) - \sigma_{\max}}}}\, ds \\ = \frac{\sigma_N(r_{\max} \cos \beta) - \sigma_{\max}}{\sqrt{r_{\max}(1-\cos \beta)}} \int_0^1 \frac{du}{\sqrt{1-u}} = 2 \frac{\sigma_N(r_{\max} \cos \beta) - \sigma_{\max}}{\sqrt{r_{\max}(1-\cos \beta)}}.
\end{split}
\]
We deduce that
\[
\tau_1(\beta) \leq \frac{2}{\sqrt{\cos \beta}} \frac{\sigma_N(r_{\max} \cos \beta) - \sigma_{\max}}{\sqrt{1-\cos \beta}}
\]
for every $\beta>0$ small enough. Since the numerator on the right-hand side is infinitesimal for $\beta\downarrow 0$, we deduce that 
\[
\tau_1(\beta) = o \left( \frac{1}{\sqrt{1-\cos \beta}} \right) = o \left( \frac{1}{\beta} \right) \qquad \mbox{for } \beta\downarrow 0,
\]
and hence \eqref{e:controllato} holds, because $\tau(v_{\beta})$ equals $2 \tau_1(\beta)$ plus the analogous quantity for the metric with profile function $\sigma\mapsto r(m-\sigma)$. When the curvature is allowed to change sign arbitrarily close to the equator -- something which may happen only if the curvature has infinite vanishing order at the equator -- $\tau(v_{\beta})$ may diverge arbitrarily fast along suitable sequences $\beta_k\downarrow 0$  (see Remark \ref{r:nocontrol}).
\end{remark}

\section{Proof of Theorem \ref{thmB} and order of vanishing of the curvature}
\label{s:abel}

Let $g$ be a metric in $\mathcal{G}$ with profile function $r: [0,m] \rightarrow [0,+\infty)$, achieving its maximum $r_{\max}$ at $\sigma_{\max}$. We now wish to express the first return time $\tau: \Sigma \rightarrow (0,\infty)$ and first return map $\varphi: \Sigma \rightarrow \Sigma$ to the Birkhoff annulus of the equator of $g$ in terms of the profile function. More precisely, the formula will involve the inverses $\sigma_S$ and $\sigma_N$  of the restriction of $r$ to $[0,\sigma_{\max}]$ and $[\sigma_{\max},m]$, as defined in \eqref{e:inverses}. 

Denote by $ L^1_{\mathrm{loc}}((0,1])$ the space of measurable real functions on $(0,1]$ which are summable on every compact subinterval $[a,1]$, $a>0$. We recall that the Abel transform is the linear operator
\[
\mathcal{A} : L^1_{\mathrm{loc}}((0,1]) \rightarrow L^1_{\mathrm{loc}}((0,1]), \qquad (\mathcal{A} f)(y) := \int_y^1 \frac{f(x)}{\sqrt{x-y}}\, dx.
\]
This operator is injective. Indeed, its square is easily computed to be the injective operator
\[
(\mathcal{A}^2 f)(y) = \pi \int_y^1 f(x)\, dx.
\]
See \cite{abe26}, and also \cite[Theorem 2.2.3]{psu23} for a modern presentation. The next result is an easy consequence of Lemma \ref{l:calcoli}

\begin{proposition}
\label{p:abel}
If $v\in \Sigma$ has geodesic polar coordinates $(\sigma_{\max},\theta,\beta)$, then
\[
\tau(v) = r_{\max} \,  \mathcal{A} \Bigl( u \mapsto ( \sigma_S'- \sigma_N') ( r_{\max} \sqrt{u} ) \Bigr) (\cos^2 \beta),
\]
and 
\[
\varphi(v) = (\sigma_{\max},\theta+ \Theta(\beta),\beta),
\]
with 
\[
\Theta(\beta) = \cos \beta\,  \mathcal{A} \Bigl( u \mapsto \frac{1}{u} ( \sigma_S'- \sigma_N') ( r_{\max} \sqrt{u} ) \Bigr) (\cos^2 \beta).
\]
\end{proposition}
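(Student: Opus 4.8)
The plan is to split the first return to the Birkhoff annulus $\Sigma$ into a northern and a southern geodesic excursion, compute each by a change of variable, and recognize the sum as an Abel transform. Fix $v=(\sigma_{\max},\theta,\beta)\in\Sigma$ with $\beta\in(0,\pi)\setminus\{\frac{\pi}{2}\}$; the meridian value $\beta=\frac{\pi}{2}$ will follow by continuity (a direct check shows both sides equal $2m$ there). Since $\sin\beta>0$, the geodesic $\gamma=\pi\circ\phi^t(v)$ first enters the northern hemisphere, and by Lemma \ref{l:behaviour-geodesics} it oscillates between the parallels $\sigma_1<\sigma_{\max}<\sigma_2$ with $r(\sigma_1)=r(\sigma_2)=r_{\max}|\cos\beta|$, touching each tangentially. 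As recorded in the proof of Lemma \ref{l:stability}, after the northern excursion — of duration $2\tau_1$, where $\tau_1=\tau_1(\beta)$ is the first tangency instant — the orbit returns to the equator at $(\sigma_{\max},\cdot,-\beta)\in\widehat{\Sigma}$, and only after the subsequent southern excursion does it re-enter $\Sigma$ (now with the same $\beta$, by \eqref{e:return_map} and conservation of Clairaut's integral). Hence $\tau(v)$ is the sum of the two excursion durations and $\Theta(\beta)$ the sum of the two angular variations.

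First I would treat the northern excursion directly from Lemma \ref{l:calcoli}. At $t_0=\tau_1$ one has $\cos^2\beta(\tau_1)=1$, so the lower integration limit collapses to $\cos^2\beta$. Because $\sigma(t)$ is symmetric about its maximum at $t=\tau_1$ and $\theta'(t)=r_{\max}\cos\beta\,/\,r(\sigma(t))^2$ keeps the sign of $\cos\beta$ by \eqref{e:cla}, both the time and the angular variation over the \emph{full} northern excursion are twice the values given by Lemma \ref{l:calcoli}; the northern excursion thus contributes, respectively to $\tau(v)$ and to $\Theta(\beta)$, the quantities
\[
-\,r_{\max}\int_{\cos^2\beta}^1 \frac{\sigma_N'(r_{\max}\sqrt{u})}{\sqrt{u-\cos^2\beta}}\,du
\qquad\text{and}\qquad
-\cos\beta\int_{\cos^2\beta}^1 \frac{1}{u}\,\frac{\sigma_N'(r_{\max}\sqrt{u})}{\sqrt{u-\cos^2\beta}}\,du.
\]
For the southern excursion I would use the reflection trick already employed in Section \ref{s:continuous}: the metric $\tilde g$ with profile $\tilde r(\sigma)=r(m-\sigma)$ turns the southern excursion of $g$ into a northern excursion of $\tilde g$, to which Lemma \ref{l:calcoli} applies verbatim. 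Since $\tilde\sigma_N(w)=m-\sigma_S(w)$, one has $\tilde\sigma_N'=-\sigma_S'$, so the two southern contributions are obtained from the northern ones above by replacing $\sigma_N'$ with $-\sigma_S'$. Summing northern and southern contributions, the integrands become $(\sigma_S'-\sigma_N')(r_{\max}\sqrt{u})$ and $\frac1u(\sigma_S'-\sigma_N')(r_{\max}\sqrt{u})$, and comparing with the definition of $\mathcal{A}$ (taking $y=\cos^2\beta$) yields exactly the stated formulas for $\tau(v)$ and $\Theta(\beta)$, with $\varphi(v)=(\sigma_{\max},\theta+\Theta(\beta),\beta)$ as in \eqref{e:return_map}.

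The substitutions are routine, so the only delicate point is the sign bookkeeping, which I expect to be the main obstacle. One must keep in mind that $\sigma_N'<0$ while $\sigma_S'>0$, check that the reflection doubling applies to both the duration and the angle with a single common $\theta$-orientation, and verify that the southern half genuinely contributes $+\sigma_S'$ rather than $-\sigma_S'$ — which is precisely what the identity $\tilde\sigma_N'=-\sigma_S'$ guarantees. Once these signs are settled, the difference $\sigma_S'-\sigma_N'$ emerges and the identification with the Abel transform is immediate.
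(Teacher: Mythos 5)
Your proof is correct and follows exactly the route the paper intends: the paper gives no written proof beyond declaring the proposition ``an easy consequence of Lemma \ref{l:calcoli}'', and that consequence is precisely your computation --- evaluate the integrals of Lemma \ref{l:calcoli} at the tangency instant $t_0=\tau_1$, double them by the reflection symmetry about the tangency (as in the proof of Lemma \ref{l:stability}), and add the southern excursion via the reflected profile $\sigma\mapsto r(m-\sigma)$, using $\tilde\sigma_N'=-\sigma_S'$ to produce the combination $\sigma_S'-\sigma_N'$ and hence the Abel transform. Your sign bookkeeping (with $\sigma_N'<0$, $\sigma_S'>0$) and the continuity treatment of the meridian case $\beta=\frac{\pi}{2}$ are both sound.
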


We can now prove Theorem \ref{thmB}. Let $g_1$ and $g_2$ be metrics in $\mathcal{G}$, and denote by $\sigma_{S,1}$, $\sigma_{N,1}$, $\sigma_{S,2}$, and $\sigma_{N,2}$ the partial inverses of their profile functions $r_1:[0,m_1] \rightarrow [0,+\infty)$ and $r_2:[0,m_2]\rightarrow [0,+\infty)$. 

Assume that $g_1$ and $g_2$ are isospectral, and in particular $\max r_1 = \max r_2 = r_{\max}$. By Proposition \ref{p:equalF}, the first return time functions $\tau_{g_1}$ and $\tau_{g_2}$ and first return maps $\varphi_{g_1}$ and $\varphi_{g_2}$ are the same, once the Birkhoff annuli $\Sigma_{g_1}$ and $\Sigma_{g_2}$ are canonically identified. In particular, the meridians of $g_1$ and $g_2$ have the same length, as this length is the value of $\tau$ at vectors with $\theta=\frac{\pi}{2}$, and we deduce that $m_1=m_2$. In the notation of Proposition \ref{p:abel}, the maps $\varphi_{g_1}$ and $\varphi_{g_2}$  are determined by the same function $\Theta=\Theta(\beta)$. By this proposition and the injectivity of the Abel transform, we have
\[
\frac{1}{u} ( \sigma_{S,1}'- \sigma_{N,1}') ( r_{\max} \sqrt{u} ) = \frac{1}{u} ( \sigma_{S,2}'- \sigma_{N,2}') ( r_{\max} \sqrt{u} ) \qquad \forall u\in [0,1),
\]
and hence
\[
\sigma_{S,1}'- \sigma_{N,1}' =  \sigma_{S,2}'- \sigma_{N,2}' \qquad \mbox{on } [0,r_{\max}).
\]
Since $(\sigma_{S,1}- \sigma_{N,1})(0) = - m_1 = - m_2 = (\sigma_{S,2}- \sigma_{N,2})(0)$, we deduce that
\begin{equation}
\label{e:equiinv}
\sigma_{S,1} - \sigma_{N,1} =  \sigma_{S,2} - \sigma_{N,2} \qquad \mbox{on } [0,r_{\max}].
\end{equation}
Since
\[
\sigma_{N,j}(\rho) - \sigma_{S,j}(\rho) = \mathrm{length} ( \{ \sigma\in [0,m_j] \mid r_j(\sigma) \geq \rho \} ) \qquad j=1,2, 
\] 
we conclude that
\begin{equation}
\label{e:equipese}
\begin{split}
\mathrm{length} ( \{ \sigma\in [0,m_1] \mid r_1(\sigma) \geq \rho \} ) = \mathrm{length} ( \{ \sigma\in [0,m_2] \mid r_2(\sigma) \geq \rho \} ) \\ \forall \rho\geq 0. 
\end{split}
\end{equation}
Conversely, assume that \eqref{e:equipese} holds. This implies that $m_1=m_2$, that $r_1$ and $r_2$ have the same maximum $r_{\max}$, and that \eqref{e:equiinv} holds. By Proposition \ref{p:abel}, the first return maps $\varphi_{g_1}$ and $\varphi_{g_2}$ are the same. As observed in Remark \ref{r:converse}, this implies that the metrics $g_1$ and $g_2$ are isospectral. This concludes the proof of Theorem \ref{thmB}.

\medskip

Corollary \ref{corC} is an immediate consequence of Theorem \ref{thmB}. Indeed, a metric $g\in \mathcal{G}$ is $\Z_2$-symmetric if, and only if, its profile function $r:[0,m]\rightarrow [0,+\infty)$ satisfies $r(m-\sigma) = r(\sigma)$ for every $\sigma\in [0,m]$, and profile functions with this symmetry property are uniquely determined by the family of lengths of their superlevel sets $r^{-1}([\rho,+\infty))$.

\medskip

Recall that the order of vanishing at $x_0$ of a smooth function $f$ of one real variable is the smallest non-negative integer $k$ such that $f^{(k)}(x_0)$ is non-zero, and is infinite if no such $k$ exist. Order of vanishing zero means $f(x_0)\neq 0$. We conclude this section by showing that if $g\in \mathcal{G}$ then the order of vanishing of the curvature  of $g$
at the equator $\{\sigma = \sigma_{\max}\}$ is determined by the marked length spectrum of $g$. Together with Proposition \ref{p:non-neg-curv}, this implies statement (ii) of Theorem \ref{thmA} and concludes the proof of that theorem.

\begin{proposition}
\label{p:order}
The marked length spectrum of $g\in \mathcal{G}$ determines the order of vanishing of the curvature $K=K(\sigma,\theta)$ of $g$ at the equator. If such order is finite, then it is an even non-negative integer $2k$, and the marked length spectrum of $g$ determines the number
\[
\frac{\partial^{2k}}{\partial \sigma^{2k}} K(\sigma_{\max},\theta),
\]
which is positive.
\end{proposition}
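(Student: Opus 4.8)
The plan is to leverage Theorem \ref{thmB}, which tells us that the marked length spectrum of $g$ determines $r_{\max}$ (through the equator length $\ell = 2\pi r_{\max}$) and the function
\[
w(\rho) := \mathrm{length}\bigl(\{\sigma\in[0,m]\mid r(\sigma)\ge\rho\}\bigr) = \sigma_N(\rho)-\sigma_S(\rho), \qquad \rho\in[0,r_{\max}].
\]
The whole proposition then reduces to extracting, from the behaviour of $w(\rho)$ as $\rho\uparrow r_{\max}$, the order of vanishing of $r''$ at $\sigma_{\max}$ together with the value of the first non-vanishing higher derivative of $r$ there.

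First I would record the purely local input. By \eqref{e:curvature} we have $K(\sigma,\theta)=-r''(\sigma)/r(\sigma)$, and since $r(\sigma_{\max})=r_{\max}>0$ and $r'(\sigma_{\max})=0$, the order of vanishing of $\sigma\mapsto K(\sigma,\theta)$ at $\sigma_{\max}$ equals that of $r''$ at $\sigma_{\max}$. Let $n\ge 2$ be the smallest integer with $r^{(n)}(\sigma_{\max})\neq 0$ (and $n=\infty$ if none exists). As observed at the start of Section \ref{s:continuous}, since $\sigma_{\max}$ is a maximum of $r$, a finite $n$ is even and $r^{(n)}(\sigma_{\max})<0$; hence the order of vanishing of $K$ is $2k:=n-2$, which is even. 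A Leibniz expansion of $K=-r''\cdot(1/r)$ at $\sigma_{\max}$, in which every term but the top one drops out because $r^{(j)}(\sigma_{\max})=0$ for $2\le j\le n-1$, yields
\[
\frac{\partial^{2k}}{\partial\sigma^{2k}}K(\sigma_{\max},\theta) = -\frac{r^{(n)}(\sigma_{\max})}{r_{\max}} = \frac{|r^{(n)}(\sigma_{\max})|}{r_{\max}} > 0.
\]
It therefore remains only to show that the spectrum determines $n$ and $|r^{(n)}(\sigma_{\max})|$.

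The main step is an asymptotic analysis of $w$ near $r_{\max}$. When $n$ is finite, Taylor's theorem gives $r(\sigma_{\max}+h)-r_{\max}=\frac{r^{(n)}(\sigma_{\max})}{n!}h^n+o(h^n)$; solving $r(\sigma)=\rho$ on each side of $\sigma_{\max}$ and using that $n$ is even, both $\sigma_N(\rho)-\sigma_{\max}$ and $\sigma_{\max}-\sigma_S(\rho)$ are asymptotic to $\bigl(n!(r_{\max}-\rho)/|r^{(n)}(\sigma_{\max})|\bigr)^{1/n}$, so that
\[
w(\rho) \sim 2\left(\frac{n!\,(r_{\max}-\rho)}{|r^{(n)}(\sigma_{\max})|}\right)^{1/n}, \qquad \rho\uparrow r_{\max}.
\]
From this I would read off
\[
n = \lim_{\rho\uparrow r_{\max}} \frac{\log(r_{\max}-\rho)}{\log w(\rho)}, \qquad |r^{(n)}(\sigma_{\max})| = 2^n\, n!\left(\lim_{\rho\uparrow r_{\max}}\frac{w(\rho)^n}{r_{\max}-\rho}\right)^{-1},
\]
both determined by $w$ and $r_{\max}$, hence by the marked length spectrum.

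The point requiring the most care is the infinite-order case, which must be handled by the same limit so that the spectrum can decide between finite and infinite order. If $r-r_{\max}$ vanishes to infinite order at $\sigma_{\max}$, then for every $N$ Taylor's theorem with remainder gives $r_{\max}-\rho \le C_N\, w(\rho)^N$ for $\rho$ close enough to $r_{\max}$, because $r_{\max}-\rho = r_{\max}-r(\sigma_{\max}+h)$ with $0<h\le w(\rho)$. Taking logarithms and dividing by $\log w(\rho)\to-\infty$ shows $\log(r_{\max}-\rho)/\log w(\rho)\to+\infty$. Thus the limit defining $n$ exists in $[2,+\infty]$ in all cases, equals the finite value $n$ precisely when $K$ vanishes to finite order $n-2$ at the equator, and equals $+\infty$ precisely in the infinite-order case. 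This shows that the marked length spectrum detects whether the order is finite, determines that order (even, equal to $n-2$), and in the finite case determines the positive quantity $\partial^{2k}_\sigma K(\sigma_{\max},\theta)$ displayed above, completing the proof.
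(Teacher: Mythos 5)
Your proof is correct and takes essentially the same route as the paper: both invoke Theorem \ref{thmB} to reduce everything to the width function $w(\rho)=\sigma_N(\rho)-\sigma_S(\rho)$, which the marked length spectrum determines, and then read off the vanishing order of $r-r_{\max}$ at $\sigma_{\max}$ and its first non-vanishing derivative from the asymptotics of $w(\rho)$ as $\rho\uparrow r_{\max}$. The only cosmetic difference is that you extract the order through the single limit $\lim_{\rho\uparrow r_{\max}}\log(r_{\max}-\rho)/\log w(\rho)$, with an explicit Taylor-remainder bound settling the infinite-order case, whereas the paper writes $r_{\max}-r(\sigma)=f(\sigma)(\sigma-\sigma_{\max})^{2h}$ and examines $w(\rho)/(r_{\max}-\rho)^{1/2h}$ -- if anything, your handling of the finite/infinite dichotomy is slightly more explicit than the paper's.
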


\begin{proof}
By \eqref{e:curvature}, the order of vanishing of the curvature at the equator equals the order of vanishing of the function $R(\sigma):= r_{\max} - r(\sigma)$ at $\sigma_{\max}$ minus 2. Since $R$ has a minimum at $\sigma_{\max}$ and vanishes there, its order of vanishing -- if finite -- is a positive even integer $2h$, and $K$ has order of vanishing $2k=2h-2\geq 0$ with
\[
\partial^{2k}_{\sigma} K(\sigma_{\max},\theta) = \frac{1}{r_{\max}} \partial^{2h}_{\sigma}  R(\sigma_{\max}),
\]
which is a positive number. Since the value of $r_{\max}$ is encoded by the marked length spectrum of $g$, it is enough to show that the order of vanishing $2h\in 2\N \cup \{\infty\}$ of $R$ at $\sigma_{\max}$ and the value of its first non vanishing derivative there are determined by the marked length spectrum.

The function $R$ has finite order of vanishing $2h$ if, and only if, the function
\[
f(\sigma) := \frac{R(\sigma)}{(\sigma - \sigma_{\max})^{2h}}
\]
has a finite non-zero limit for $\sigma\rightarrow \sigma_{\max}$, and in this case we have
\[
R^{(2h)}(\sigma_{\max}) = (2h)! \lim_{\sigma \rightarrow \sigma_{\max}} f(\sigma).
\]
Evaluating the identity
\[
r_{\max} - r(\sigma) = f(\sigma) (\sigma-\sigma_{\max})^{2h}
\]
at $\sigma=\sigma_S(\rho)$ and $\sigma=\sigma_{N}(\rho)$, where $\rho\in [0,r_{\max})$, we obtain 
\[
\sigma_S(\rho) = \sigma_{\max} - \left( \frac{r_{\max} - \rho}{f(\sigma_S(\rho))} \right)^{\frac{1}{2h}}, \qquad
\sigma_N(\rho) = \sigma_{\max} + \left( \frac{r_{\max} - \rho}{f(\sigma_N(\rho))} \right)^{\frac{1}{2h}}.
\]
Therefore, for every $\rho\in [0,r_{\max})$ we have
\[
\frac{\mathrm{length}(\{\sigma\in [0,m] \mid r(\sigma) \geq \rho \})}{(r_{\max}-\rho)^{\frac{1}{2h}}} = \frac{\sigma_N(\rho) - \sigma_S(\rho)}{(r_{\max}-\rho)^{\frac{1}{2h}}} = f(\sigma_N(\rho))^{\frac{1}{2h}} + f(\sigma_S(\rho))^{\frac{1}{2h}}, 
\]
and hence
\[
\lim_{\rho \uparrow r_{\max}} \frac{\mathrm{length}(\{\sigma\in [0,m] \mid r(\sigma) \geq \rho\})}{(\rho-r_{\max})^{\frac{1}{2h}} }  = 2 \lim_{\sigma\rightarrow \sigma_{\max}} f(\sigma).
\]
By Theorem \ref{thmB}, the marked length spectrum of $g$ determines the limit on the left-hand side, and hence also the limit on the right-hand side. By our previous considerations, this proves that the marked length spectrum determines the order of vanishing $2h\in 2\N \cup \{\infty\}$ of the function $R$ at $\sigma_{\max}$ and, when $h$ is finite, also the value of $R^{(2h)}(\sigma_{\max})$, as we wished to show.
\end{proof}

\section{Isospectral classes}
\label{s:isospectral_classes}

In this section, we discuss the proof of Corollary \ref{corD}. Let $r:[0,m]\rightarrow [0,+\infty)$ be a profile function. Recall that this means that $r$ is positive on $(0,m)$, vanishes  at $0$ and $m$, satisfies $r'(0)=1=-r'(m)$, and is the restriction of a $2m$-periodic and odd smooth function on $\R$. The profile function $r$ is here said to be \textit{unimodal} if it has a unique critical point $\sigma_{\max}\in (0,m)$ with $r(\sigma_{\max})=r_{\max} = \max r$. Unimodal profile functions are precisely the profile functions of the metrics in $\mathcal{G}$.

The \textit{symmetric rearrangement} of the unimodal profile function $r$ is the unique function $r_s: [0,m] \rightarrow [0,+\infty)$ such that $r_s(m-\sigma) = r_s(\sigma)$ for every $\sigma\in [0,m]$ and
\begin{equation}
\label{e:uguali}
\mathrm{length}\bigl( r_s^{-1} ([\rho,+\infty)) \bigr) = \mathrm{length}\bigl( r^{-1} ([\rho,+\infty)) \bigr) \qquad \forall \rho\geq 0.
\end{equation}
See \cite[Chapter 3]{ll01} for general facts about symmetric rearrangements. Equivalently, $r_s = r\circ \varphi^{-1}$, where $\varphi: [0,m] \rightarrow [0,m]$ is the bijective function
\begin{align}
\label{e:diffeo}
\varphi(\sigma)
=
\begin{cases}
\displaystyle\frac{m}{2} - \frac{1}{2} \,\mathrm{length} \bigl( r^{-1}([r(\sigma),+\infty)) \big) , & \sigma\leq \sigma_{\max} ,\vspace{5pt} \\ 
\displaystyle\frac{m}{2} + \frac{1}{2} \, \mathrm{length} \bigl( r^{-1}([r(\sigma),+\infty)) \big) , & \sigma\geq  \sigma_{\max} .  
\end{cases}
\end{align}

Recall also that $\mathcal{G}_*$ denotes the set of smooth metrics in $\mathcal{G}$ whose curvature does not vanish to infinite order at the equator. Equivalently, $\mathcal{G}_*$ consists of the metrics in $\mathcal{G}$ with profile function $r: [0,m] \rightarrow [0,+\infty)$ such that the function $r- r_{\max}$ has finite vanishing order at $\sigma_{\max}$. The next lemma shows in particular that the symmetric rearrangement of the profile function of a metric in $\mathcal{G}_*$ is a unimodal profile function as well.

\begin{lemma}
\label{l:rearrangements}
Let $r:[0,m] \rightarrow [0,+\infty)$ be a smooth $($resp.\ analytic$)$ unimodal profile function with $r(\sigma_{\max}) = r_{\max} = \max r$. If $r-r_{\max}$ has finite vanishing order at $\sigma_{\max}$, then the map $\varphi$ defined in \eqref{e:diffeo} is a smooth $($resp.\ analytic$)$ diffeomorphism of $[0,m]$ with $\varphi'(0)=\varphi'(\sigma_{\max}) = \varphi'(m)=1$ and extends to a smooth $($resp.\ analytic$)$ diffeomorphism $\varphi: \R \rightarrow \R$ such that
\begin{equation}
\label{e:extension}
\varphi(\sigma+ 2m) = \varphi(\sigma) + 2m, \qquad \varphi(-\sigma) = - \varphi(\sigma), \qquad \forall \sigma\in \R.
\end{equation}
In particular, the symmetric rearrangement $r_s=r\circ \varphi^{-1}$ is a smooth $($resp.\ analytic$)$ unimodal profile function as well.
\end{lemma}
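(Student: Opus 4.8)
The plan is to route every regularity statement about $\varphi$ through the natural involution of the interval attached to $r$. Let $\iota:[0,m]\to[0,m]$ be the unique orientation-reversing involution with $r\circ\iota=r$; it fixes $\sigma_{\max}$, exchanges the endpoints $0$ and $m$, and away from $\sigma_{\max}$ is given by $\iota=\sigma_N\circ r$ on $[0,\sigma_{\max}]$ and $\iota=\sigma_S\circ r$ on $[\sigma_{\max},m]$. Rewriting \eqref{e:diffeo} in terms of the partial inverses $\sigma_S,\sigma_N$ shows that on all of $[0,m]$ one has the single formula
\[
\varphi(\sigma)=\tfrac{m}{2}+\tfrac{1}{2}\bigl(\sigma-\iota(\sigma)\bigr).
\]
Hence $\varphi$ is smooth (resp.\ analytic) exactly where $\iota$ is, and $\varphi'=\tfrac12(1-\iota')$. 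Differentiating $r\circ\iota=r$ gives $\iota'(\sigma)=r'(\sigma)/r'(\iota(\sigma))<0$ on $(0,m)\setminus\{\sigma_{\max}\}$, so $\varphi'>\tfrac12>0$ there; together with $\varphi(0)=0$ and $\varphi(m)=m$ this will yield the diffeomorphism property once smoothness is in hand.

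The crux is the regularity of $\iota$ at $\sigma_{\max}$, and this is exactly where the finite vanishing order is used. By Hadamard's lemma I write $r_{\max}-r(\sigma)=(\sigma-\sigma_{\max})^{2h}a(\sigma)$ with $a$ smooth (resp.\ analytic) and $a(\sigma_{\max})>0$, the order $2h$ being even and finite by hypothesis. I then introduce the local change of variable $\xi(\sigma):=(\sigma-\sigma_{\max})\,a(\sigma)^{1/(2h)}$, which is smooth (resp.\ analytic) near $\sigma_{\max}$ since $a>0$ there, is a local diffeomorphism because $\xi'(\sigma_{\max})=a(\sigma_{\max})^{1/(2h)}>0$, and satisfies $\xi^{2h}=r_{\max}-r$. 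The relation $r\circ\iota=r$ forces $\xi(\iota(\sigma))^{2h}=\xi(\sigma)^{2h}$; since $\xi$ has the sign of $\sigma-\sigma_{\max}$ while $\iota$ reverses that sign, this upgrades to $\xi\circ\iota=-\xi$, i.e.\ $\iota=\xi^{-1}\circ(-\mathrm{id})\circ\xi$. Thus $\iota$, and therefore $\varphi$, is smooth (resp.\ analytic) near $\sigma_{\max}$, with $\iota'(\sigma_{\max})=-1$ and $\varphi'(\sigma_{\max})=1$. Away from $\sigma_{\max}$ smoothness of $\iota$ is clear from $\iota=\sigma_N\circ r$ / $\sigma_S\circ r$ and $r'\neq0$ (including at $0$ and $m$, where $r'(0)=1$, $r'(m)=-1$), giving also $\varphi'(0)=\varphi'(m)=1$. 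I expect the delicate point to be promoting $\xi\circ\iota=\pm\xi$ to $\xi\circ\iota=-\xi$ via the sign of $\xi$; this is the step that genuinely exploits the finite order.

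For the extension to $\R$ I reduce the parity conditions forced by \eqref{e:extension} to symmetries of $\iota$ inherited from $r$. Since $\varphi^{(n)}=-\tfrac12\iota^{(n)}$ for $n\ge2$, the odd extension at $0$ is smooth iff $\iota(\sigma)-m$ is odd in $\sigma$ near $0$, and the quasi-periodic gluing at $m$ is smooth iff $s\mapsto\iota(m-s)$ is odd near $0$. Both hold because the odd $2m$-periodic extension of $r$ satisfies $r(-\sigma)=-r(\sigma)$ and $r(m+t)=-r(m-t)$: near $0$ one has $\iota(\sigma)-m=-(\tilde r^{-1}\circ r)(\sigma)$ with $\tilde r(s):=r(m-s)$, where $r$ and $\tilde r$ are both odd with derivative $1$ at the origin, so $\iota(\sigma)-m$ is (up to sign) a composition of odd functions, hence odd; the computation at $m$ is symmetric. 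Consequently the extension of $\varphi$ by $\varphi(-\sigma)=-\varphi(\sigma)$ and $\varphi(\sigma+2m)=\varphi(\sigma)+2m$ is a smooth (resp.\ analytic) function with $\varphi'>0$ everywhere, i.e.\ a diffeomorphism of $\R$ obeying \eqref{e:extension}.

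Finally, for $r_s=r\circ\varphi^{-1}$ the symmetries \eqref{e:extension} pass to $\varphi^{-1}$, so $r_s$ is odd and $2m$-periodic, with $r_s(0)=r_s(m)=0$, $r_s'(0)=1=-r_s'(m)$, and $r_s>0$ on $(0,m)$; since $(\varphi^{-1})'\neq0$, the only critical point of $r_s$ is at $\varphi(\sigma_{\max})=m/2$, a maximum, so $r_s$ is unimodal. The reflection symmetry $r_s(m-\tau)=r_s(\tau)$ follows from the identity $\varphi\circ\iota=m-\varphi$, which is immediate from the displayed formula for $\varphi$ together with $\iota\circ\iota=\mathrm{id}$; this identifies $r_s$ with the symmetric rearrangement and shows it is a smooth (resp.\ analytic) unimodal profile function, completing the proof.
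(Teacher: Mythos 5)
Your proof is correct and follows essentially the same route as the paper: your substitution $\xi(\sigma)=(\sigma-\sigma_{\max})\,a(\sigma)^{1/(2h)}$ is the inverse of the paper's normal-form parametrization $\nu$ (in which $r\circ\nu(u)=r_{\max}-u^{2h}$ and $\varphi\circ\nu(u)=\tfrac{1}{2}\bigl(m+\nu(u)-\nu(-u)\bigr)$, i.e.\ your $\iota=\nu\circ(-\mathrm{id})\circ\nu^{-1}$), and your odd-germ computations at $0$ and $m$ reproduce the paper's use of the odd extensions of $\sigma_S$ and $\sigma_N-m$. One harmless imprecision: your two ``iff'' claims about the odd gluings should be ``if'' -- in the smooth category the gluing is equivalent to the vanishing of the even-order derivatives, which oddness of the germ (what you actually prove) implies but is strictly stronger than -- and this does not affect the argument.
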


\begin{proof}
The functions
\[
\sigma_S:=r|_{[0,\sigma_{\max}]}^{-1} \qquad \mbox{and} \qquad \sigma_N:=r|_{[\sigma_{\max},m]}^{-1}
\]
restrict to diffeomorphisms from $[0,r_{\max})$ to either $[0,\sigma_{\max})$ or $(\sigma_{\max},m]$, such that $\sigma_S'(0)=-\sigma_N'(0)=1$.
Since $r$ admits a smooth (resp.\ analytic) extension to an odd $2m$-periodic function, $\sigma_S$ and $\sigma_N-m$ admit smooth (resp.\ analytic) odd extensions to $(-r_{\max},r_{\max})$. Since
\[
\mathrm{length} \big( r^{-1}([\rho,+\infty)) \big) = \sigma_N(\rho)-\sigma_S(\rho) \qquad \forall \rho\in [0,r_{\max}],
\]
the function $\varphi$ has the form
\begin{align*}
\varphi(\sigma)
=
\left\{
  \begin{array}{@{}ll}
    \displaystyle\frac{m}{2} - \frac{1}{2}  \bigl( \sigma_N(r(\sigma)) - \sigma_S(r(\sigma)) \bigr) , & \sigma\leq \sigma_{\max} ,\vspace{5pt} \\ 
    \displaystyle\frac{m}{2} + \frac{1}{2} \bigl( \sigma_N(r(\sigma)) - \sigma_S(r(\sigma)) \bigr) , & \sigma\geq  \sigma_{\max} , \\ 
  \end{array}
\right.
\end{align*}
and hence admits an extension to a homeomorphism of $\R$ which satisfies \eqref{e:extension}, is smooth (resp.\ analytic) with positive derivative outside of $\pm \sigma_{\max} + 2m \Z$, and satisfies
\[
\varphi'(0)=\varphi'(m)=1.
\]
There remains to prove that $\varphi$ is smooth (resp.\ analytic) in a neighborhood of $\sigma_{\max}$ with $\varphi'(\sigma_{\max})=1$, so that, by \eqref{e:extension}, it is a diffeomorphism on the whole $\R$.

Since we are assuming that $r-r_{\max}$ has finite vanishing order at $\sigma_{\max}$, we can find $k\in \N$ such that $r'(\sigma_{\max}) = \dots = r^{(2k-1)}(\sigma_{\max}) = 0$ and $r^{(2k)} (\sigma_{\max}) < 0$. Then there is a smooth (resp.\ analytic) increasing diffeomorphism $\nu$ from a small interval $(-\epsilon,\epsilon)$ to an open interval containing $\sigma_{\max}$ such that $\nu(0) = \sigma_{\max}$ and
\begin{equation}
\label{e:gen-morse}
r(\nu(u)) = r_{\max} - u^{2k} \qquad \forall u\in (-\epsilon,\epsilon).
\end{equation}
Indeed, since the non-negative function $r_{\max} - r$ vanishes with order $2k$ at $\sigma_{\max}$, Taylor's formula with integral remainder gives us
\[
r_{\max} - r(\sigma) = (\sigma-\sigma_{\max})^{2k} f(\sigma),
\]
for some smooth (resp.\ analytic) positive function $f$ on a neighborhood of $\sigma_{\max}$, and, choosing $\nu$ to be the inverse of the local diffeomorphism at $\sigma_{\max}$ given by
\[
\sigma \mapsto f(\sigma)^{\frac{1}{2k}} (\sigma-\sigma_{\max}),
\]
we deduce that \eqref{e:gen-morse} holds for $\epsilon>0$ small enough. Then we have
\begin{align*}
 \varphi(\nu(u)) =\frac{m + \nu(u)-\nu(-u)}2, \qquad \forall u\in (-\epsilon,\epsilon),
\end{align*}
and we infer that $\varphi$ is smooth (resp.\ analytic) on a neighborhood of $\sigma_{\max}$.  By differentiating the latter identity, we obtain
\begin{align*}
 \varphi'(\nu(u))\nu'(u) =\frac{\nu'(u)+\nu'(-u)}2,
\end{align*}
and by evaluating this expression at $u=0$, we conclude that $\varphi'(\sigma_{\max})=1$. This concludes the proof of the properties of $\varphi$. By these properties, $r_s = r\circ \varphi$ is a smooth (resp.\ analytic) unimodal profile function.
\end{proof}

\begin{remark}\label{r:smooth?}
If $r:[0,m] \rightarrow [0,+\infty)$ is a unimodal profile function such that $r - \max r$ vanishes to infinite order at the maximum point of $r$, then its symmetric rearrangement $r_s$ is smooth on $[0,m] \setminus \{\frac{m}{2}\}$ and satisfies
\[
\lim_{\sigma \rightarrow \frac{m}{2}} \frac{r_s(\sigma) - r_s(\frac{m}{2})}{(\sigma - \frac{m}{2})^k}=0  \qquad \forall k\in \N,
\]
but we do not know whether $r_s$ is smooth at $\frac{m}{2}$. Explicit computations show that it is of class $C^3$, where the proof of the the fact that $r_s'''(\sigma)$ tends to zero for $\sigma\rightarrow \frac{m}{2}$ uses the following inequality which is due to Glaeser \cite{gla63b} (see also \cite{die70} for this formulation): if the smooth function $f: \R \rightarrow [0,+\infty)$ vanishes at zero together with its first and second derivative, then 
\[
f'(x)^2 \leq 2  f(x) \cdot \max_{|y|\leq 2 |x|} |f''(y)|
\]
for every $x\in \R$.
\end{remark}

We shall also need the following easy observation.

\begin{lemma}
\label{l:easy}
Let $r_s:[0,m] \rightarrow [0,+\infty)$ be a unimodal profile function such that $r_s(m-\sigma) = r_s(\sigma)$ for every $\sigma\in [0,m]$, and let $r=r_s\circ \varphi$ for some orientation preserving homeomorphism $\varphi: [0,m] \rightarrow [0,m]$. Writing $\varphi^{-1}(\tau) = \tau + \psi(\tau)$, we have that $r_s$ is the symmetric rearrangement of $r$ if, and only if, $\psi(m-\tau) = \psi(\tau)$ for every $\tau \in [0,m]$. 
\end{lemma}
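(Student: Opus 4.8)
The plan is to reduce the definition of symmetric rearrangement to the superlevel-set condition \eqref{e:uguali} and then transport that condition through the change of variables $\varphi$ into a pointwise identity for $\psi$. Since $r_s$ is already assumed to be symmetric, and the symmetric rearrangement of $r$ is by definition the \emph{unique} symmetric profile function sharing the superlevel-set lengths of $r$, the assertion ``$r_s$ is the symmetric rearrangement of $r$'' is equivalent to
\[
\mathrm{length}\bigl( r^{-1}([\rho,+\infty)) \bigr) = \mathrm{length}\bigl( r_s^{-1}([\rho,+\infty)) \bigr) \qquad \forall \rho \geq 0.
\]
Thus the entire proof amounts to showing that this family of length identities is equivalent to the symmetry $\psi(m-\tau)=\psi(\tau)$.

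First I would record the geometry of the superlevel sets of $r_s$. Being unimodal with a unique maximum and invariant under $\sigma\mapsto m-\sigma$, the function $r_s$ attains its maximum $r_{\max}$ at $m/2$, and for each $\rho\in[0,r_{\max}]$ its superlevel set is a closed interval symmetric about $m/2$, namely $[a,m-a]$ with $a=a(\rho):=(r_s|_{[0,m/2]})^{-1}(\rho)\in[0,m/2]$. As $\rho$ ranges over $[0,r_{\max}]$, the parameter $a$ exhausts all of $[0,m/2]$, because $r_s|_{[0,m/2]}$ is a continuous strictly increasing bijection onto $[0,r_{\max}]$. For $\rho>r_{\max}$ both superlevel sets are empty and contribute nothing.

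Next I would push these intervals forward through $\varphi$. Since $r=r_s\circ\varphi$ and $\varphi$ is an orientation-preserving homeomorphism,
\[
r^{-1}([\rho,+\infty)) = \varphi^{-1}\bigl( r_s^{-1}([\rho,+\infty)) \bigr) = \varphi^{-1}([a,m-a]) = \bigl[ \varphi^{-1}(a),\, \varphi^{-1}(m-a) \bigr],
\]
so with $\varphi^{-1}(\tau)=\tau+\psi(\tau)$ its length equals $(m-a)-a+\psi(m-a)-\psi(a)$. Comparing with $\mathrm{length}(r_s^{-1}([\rho,+\infty)))=m-2a$, the length identity at level $\rho$ collapses to exactly $\psi(m-a(\rho))=\psi(a(\rho))$. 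Letting $\rho$ vary over $[0,r_{\max}]$ and using that $a(\rho)$ sweeps $[0,m/2]$, the full family of length identities is equivalent to $\psi(m-a)=\psi(a)$ for every $a\in[0,m/2]$. Finally, substituting $\tau=m-a$ shows this is the same as $\psi(m-\tau)=\psi(\tau)$ for every $\tau\in[0,m]$, since the condition is symmetric under $a\leftrightarrow m-a$; this yields the claimed equivalence.

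The argument is essentially a bookkeeping computation, so I do not expect a serious obstacle. The only point demanding a little care is the range statement — that $a(\rho)$ covers all of $[0,m/2]$ — which is what guarantees that the resulting condition on $\psi$ holds on the whole interval rather than on a proper subset, and hence that the equivalence runs in both directions.
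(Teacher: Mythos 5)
Your proof is correct and takes essentially the same route as the paper's: both reduce ``$r_s$ is the symmetric rearrangement of $r$'' to the superlevel-set length identity, compute $\varphi^{-1}([\tau,m-\tau]) = [\tau+\psi(\tau),\,m-\tau+\psi(m-\tau)]$, and observe that the length condition at each level collapses to $\psi(m-\tau)=\psi(\tau)$ for $\tau\in[0,\frac{m}{2}]$, hence for all $\tau\in[0,m]$ by the symmetry of the condition. Your explicit check that $a(\rho)$ sweeps all of $[0,\frac{m}{2}]$ is a detail the paper leaves implicit, but the argument is the same.
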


\begin{proof}
Given $\rho\in [0,\max r_s]$, let $\tau\in [0,\frac{m}{2}]$ be such that $r_s(\tau)=r_s(m-\tau)=\rho$. Then $r_s^{-1}([\rho,+\infty))=[\tau,m-\tau]$ and
\[
r^{-1}([\rho,+\infty)) = \varphi^{-1} (r_s^{-1} ([\rho,+\infty))) = \varphi^{-1} ([\tau,m-\tau]) = [\tau + \psi(\tau), m - \tau + \psi(m-\tau)].
\]
Therefore, the intervals $r_s^{-1}([\rho,+\infty))$ and $r^{-1}([\rho,+\infty))$ have the same length if, and only if, $\psi(m-\tau)=\psi(\tau)$. We conclude that $r_s$ is the symmetric rearrangement of $r$ if and only if the latter identity holds for every $\tau\in [0,\frac{m}{2}]$, which is equivalent to the fact that it holds for every $\tau\in [0,m]$.
\end{proof}

We can now prove Corollary \ref{corD}. If $r$ is the profile function of a smooth (resp.\ analytic) metric $g\in \mathcal{G}_*$, then the order of vanishing of $r-\max r$ at the maximum point of $r$ is finite, and hence Lemma \ref{l:rearrangements} implies that its symmetric rearrangement $r_s$ is the profile function of a smooth (resp.\ analytic) $\Z_2$-symmetric metric $g_s\in \mathcal{G}_*$.  By \eqref{e:uguali} and Theorem \ref{thmB}, $g_s$ is isospectral to $g$. The uniqueness of $g_s$ follows from Corollary \ref{corC}.

Now let $g_s$ be a $\Z_2$-symmetric smooth (resp.\ analytic) metric in $\mathcal{G}_*$ with profile function $r_s:[0,m] \rightarrow [0,+\infty)$, and let $\Psi$ be the set of smooth (resp.\ analytic) $2m$-periodic odd functions $\psi: \R \rightarrow \R$ satisfying $\psi(m - \tau) = \psi(\tau)$ for every $\tau \in \R$, $\psi'(0) = 0$, and $|\psi'| < 1$.

If the smooth (resp.\ analytic) metric $g\in \mathcal{G}_*$ with profile function $r$ is isospectral to $g_s$ then Theorem \ref{thmB} implies that $r_s$ is the symmetric rearrangement of $r$, and by Lemma \ref{l:rearrangements} we have $r=r_s\circ \varphi$ where $\varphi: \R \rightarrow \R$ is a smooth (resp.\ analytic) diffeomorphism with $\varphi'(0)=1$ and satisfying \eqref{e:extension}. Writing $\varphi^{-1}(\tau) = \tau + \psi(\tau)$ where $\psi$ is $2m$-periodic and odd, Lemma \ref{l:easy} implies that $\psi(m-\tau) = \psi(\tau)$ for every $\tau\in [0,m]$ and hence, by oddness and $2m$-periodicity, for every $\tau\in \R$. The fact that $\varphi'>0$ implies that $\psi'>-1$. Together with the identity $\psi'(\tau) =  - \psi'(m-\tau)$, we deduce that $|\psi'|<1$. From $\varphi'(0)=1$ we obtain $\psi'(0)=0$, and we conclude that $\psi$ is in $\Psi$.

Conversely, for every $\psi\in \Psi$ the map $\tau \rightarrow \tau + \psi(\tau)$ is an orientation preserving diffeomorphism of $\R$ and its inverse $\varphi:\R \rightarrow \R$ satisfies $\varphi([0,m]) = [0,m]$, $\varphi'(0)=\varphi'(m)=1$, and \eqref{e:extension}. Then $r=r_s\circ \varphi$ is the profile function of a smooth (resp.\ analytic) metric $g\in \mathcal{G}_*$, which by Lemma \ref{l:easy} and Theorem \ref{thmB} is isospectral to $g_s$. This concludes the proof of Corollary \ref{corD}.

\section{Examples without smooth $S^1$-equivariant conjugacy}
\label{s:notsmooth}

Let $g\in \mathcal{G}$ be an analytic $\Z_2$-symmetric metric.  Let $\psi: \R \rightarrow \R$ be an analytic $2m$-periodic odd function such that $\psi(m-\tau) = \psi(\tau)$ for every $\tau\in \R$, $\psi'(0)=0$, $|\psi'|\leq1$, and
\begin{equation}
\label{e:condonpsi}
\psi\bigl( {\textstyle \frac{m}{2}} \bigr) = {\textstyle \frac{m}{2}}, \qquad \psi''\bigl( {\textstyle \frac{m}{2}} \bigr) \neq 0.
\end{equation}
Given $\varepsilon\in (-1,1)$, let $\varphi_{\varepsilon}$ be the inverse of the diffeomorphism $\tau\mapsto \tau + \varepsilon \,\psi (\tau)$. By Corollary \ref{corD}, $r_{\varepsilon} := r \circ \varphi_{\varepsilon}$ is the profile function of an analytic metric $g_{\varepsilon}$ which is isospectral to $g$. The path $\varepsilon \mapsto g_{\varepsilon}$ is analytic. Note that \eqref{e:condonpsi} implies that $g_{\varepsilon}$ fails to be $\Z_2$-symmetric for every $\varepsilon \neq 0$. 

Theorem \ref{thmA} (ii) implies that for every $\varepsilon\in (-1,1)$ the geodesic flow of $g$ is conjugate to the geodesic flow of $g_{\varepsilon}$ by an $S^1$-equivariant homeomorphism $h: T^1_{g} S^2 \rightarrow T^1_{g_{\varepsilon}} S^2$ whose restriction to the complement of $\Gamma_g$ is an analytic contactomorphism. The next result shows in particular that if the curvature of $g$ vanishes along the equator and $\varepsilon \neq 0$, then the conjugacy $h$ cannot be everywhere analytic.

\begin{proposition}
\label{p:notsmooth}
Let $g$ be  an analytic $\Z_2$-symmetric metric with vanishing curvature along the equator, and let $\{g_{\varepsilon}\}_{\varepsilon \in (-1,1)}$ be the analytic family of deformations of $g$ defined above. Then for every $\varepsilon \neq 0$ there is no smooth orientation preserving $S^1$-equivariant conjugacy from the geodesic flow of $g$ to the one of $g_{\varepsilon}$.
\end{proposition}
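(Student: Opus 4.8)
The plan is to reduce everything to the reduced Hamiltonian systems on the quotients $W_g$ and $W_{g_\varepsilon}$ of the unit tangent bundles by the $S^1$-action. As recalled in Section \ref{s:regular}, the geodesic flow of a metric in $\GG$ projects under $\Pi$ to the Hamiltonian flow of $(H,\omega)$, where $H(\sigma,\beta)=r(\sigma)\cos\beta$ and $\omega=r(\sigma)\,d\beta\wedge d\sigma$, see \eqref{e:HH} and \eqref{e:omega}. Since the curvature of $g$ vanishes along the equator, $r''(\sigma_{\max})=0$ by \eqref{e:curvature}, so the maximum $\Gamma^+$ of $H$ is a \emph{degenerate} (non-Morse) center. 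A smooth orientation preserving $S^1$-equivariant conjugacy $h$ between the geodesic flows of $g$ and $g_\varepsilon$ descends to a smooth conjugacy $\widehat h\colon W_g\to W_{g_\varepsilon}$ of the two planar Hamiltonian flows, mapping $\Gamma^+_g$ to $\Gamma^+_{g_\varepsilon}$; the orientation and equivariance hypotheses serve to make this descended map a genuine conjugacy of the forward reduced flows near the centers. Because $g$ and $g_\varepsilon$ are isospectral by construction (Corollary \ref{corD}), the two reduced systems have the \emph{same} period function $c\mapsto T(c)$, which is precisely why a conjugacy exists away from the center; the obstruction must therefore be located at the degenerate center itself.

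The key device is the \emph{half-period shift} $P(x):=\psi^{T(H(x))/2}(x)$, obtained by flowing each point halfway around its orbit. It commutes with the flow, and since $T$ is preserved by any conjugacy and $\widehat h$ maps orbits to orbits, one gets the intertwining identity $\widehat h\circ P_g=P_{g_\varepsilon}\circ\widehat h$, with no monotonicity of $T$ needed. For the $\Z_2$-symmetric metric $g$ the northern and southern geodesic arcs are mirror images, so the two half-traverse times agree and $P_g$ equals the smooth reflection involution $\jmath(\sigma,\beta)=(m-\sigma,-\beta)$ of Example \ref{e:not-smooth}, which extends smoothly across $\Gamma^+_g$ (indeed $P_g$ and $\jmath$ both commute with the flow and agree on the global section $\Pi(\Sigma)$, hence coincide). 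Consequently, if $\widehat h$ were smooth then $P_{g_\varepsilon}=\widehat h\circ\jmath\circ\widehat h^{-1}$ would extend smoothly across $\Gamma^+_{g_\varepsilon}$. The whole proof thus reduces to showing that, for $\varepsilon\neq0$, the half-period shift $P_{g_\varepsilon}$ does \emph{not} extend smoothly to its center.

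To produce this obstruction I would quantify the broken north--south symmetry of $g_\varepsilon$. Writing $\sigma_{N,\varepsilon}=\sigma_N+\varepsilon\,(\psi\circ\sigma_N)$ and $\sigma_{S,\varepsilon}=\sigma_S+\varepsilon\,(\psi\circ\sigma_S)$ for the partial inverses of $r_\varepsilon=r\circ\varphi_\varepsilon$, and using that $r$ is symmetric ($\sigma_N=m-\sigma_S$) together with $\psi(m-\tau)=\psi(\tau)$, one finds
\[
\sigma_{N,\varepsilon}(\rho)+\sigma_{S,\varepsilon}(\rho)=m+2\varepsilon\,\psi(\sigma_S(\rho)),
\]
so that $\sigma_{N,\varepsilon}'+\sigma_{S,\varepsilon}'=2\varepsilon\,\psi'(\sigma_S)\,\sigma_S'$ does not vanish identically near $\rho=r_{\max}$: since $\psi'(\tfrac m2)=0$ but $\psi''(\tfrac m2)\neq0$ by \eqref{e:condonpsi}, this quantity is of order $\varepsilon\,\psi''(\tfrac m2)(\sigma_S(\rho)-\tfrac m2)$. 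By Proposition \ref{p:abel} and Lemma \ref{l:calcoli}, the Abel transform of this asymmetry is precisely the difference $\tau_{N,\varepsilon}-\tau_{S,\varepsilon}$ of the two traverse times, and on the Birkhoff curve $P_{g_\varepsilon}$ is the reflection $(\sigma_{\max,\varepsilon},\beta)\mapsto(\sigma_{\max,\varepsilon},-\beta)$ followed by the flow for the nonzero time $-\tfrac12\bigl(\tau_{N,\varepsilon}(\beta)-\tau_{S,\varepsilon}(\beta)\bigr)$. This is the degenerate counterpart of the computation in Example \ref{e:not-smooth}, where a \emph{bounded} return time turned the first-order mismatch $\tau_N'-\tau_S'\to a\neq0$ into a failure of $C^2$-regularity by a plain double differentiation.

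The hard part is exactly this last step. Because the center is degenerate, the period $T(c)$ diverges as $c\uparrow r_{\max}$ and the naive differentiation of Example \ref{e:not-smooth} is no longer available: one has to feed the nonzero time-mismatch governed by $\varepsilon\,\psi''(\tfrac m2)$ into the flow of a non-Morse center and show the result cannot be smooth. This is where I would invoke the smooth classification of planar Hamiltonian systems at a degenerate elliptic singularity due to Martynchuk and V\~{u} Ng\d{o}c \cite{mvn}: their normal forms describe the flow near such a center finely enough to show that post-composing the reflection with the flow for a time with this leading asymptotics produces a germ that is not smooth (in fact not $C^2$) at the center. This contradicts the smoothness of $P_{g_\varepsilon}$ forced in the second paragraph, and therefore no smooth orientation preserving $S^1$-equivariant conjugacy from the geodesic flow of $g$ to that of $g_\varepsilon$ can exist.
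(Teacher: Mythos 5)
Your reduction is sound as far as it goes, and it is a genuinely different skeleton from the paper's: the descent to the quotient, the intertwining $\widehat h\circ P_g=P_{g_\varepsilon}\circ\widehat h$ (which indeed needs only that conjugacies preserve periods, not symplecticity of $\widehat h$), the identification $P_g=\jmath$ for the $\Z_2$-symmetric metric, and the asymmetry computation $\sigma_{N,\varepsilon}+\sigma_{S,\varepsilon}=m+2\varepsilon\,\psi(\sigma_S)$ are all correct. The paper instead normalizes $h$ by a Moser argument so that $\widehat h^*H_\varepsilon=H$ and $\widehat h^*\omega_\varepsilon=\omega$, factors out the explicit map $\nu(\sigma,\beta)=(\sigma+\varepsilon\psi(\sigma),\beta)$, and applies Lemma \ref{l:marvu} to $f=\nu^{-1}\circ\widehat h$, which preserves $H$ and carries $\omega$ to $\tilde\omega=r(\sigma)(1+\varepsilon\psi'(\sigma))\,d\beta\wedge d\sigma$.

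The genuine gap is your decisive step: the claim that $P_{g_\varepsilon}$ does not extend smoothly across the degenerate center is asserted, not proven, and ``invoke the Martynchuk--V\~u Ng\d{o}c normal forms'' is not an argument. The results of \cite{mvn} used in the paper do not say that flowing for a time with given singular asymptotics destroys smoothness; indeed, that kind of reasoning cannot work on its own, since $\psi^{T(H(\cdot))}=P_{g_\varepsilon}^2=\mathrm{id}$ is perfectly smooth even though $T$ diverges at the center (Lemma \ref{l:extension}). What Lemma \ref{l:marvu} actually provides is that a smooth orientation preserving germ preserving the $A_{2k-1}$ model $K=x^{2k}+y^2$ and relating two symplectic forms forces solvability of a cohomological equation $dK\wedge du=\omega_1-\omega_0$ or $dK\wedge du=\omega_1-\iota^*\omega_0$. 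To complete your route you would have to run this on $f=P_{g_\varepsilon}$, which does preserve $H_\varepsilon$ and $\omega_\varepsilon$ away from the center (it is the time-one map of the Hamiltonian $\tfrac12\,\mathcal T\circ H_\varepsilon$ with $\mathcal T'=T$): namely (a) check that its differential at the center falls into alternative (ii) of Lemma \ref{l:marvu}; (b) identify the resulting equation, in a normal-form chart for $H_\varepsilon$, as $dH_\varepsilon\wedge du=\omega_\varepsilon-\jmath_\varepsilon^*\omega_\varepsilon$ for a suitable involution; and (c) prove this equation has no solution twice differentiable at the center. Step (c) is exactly the paper's closing computation -- restrict the equation to $\beta=0$, differentiate in $\sigma$, and evaluate at the degenerate maximum to obtain $0=-\varepsilon\, r_{\max}\,\psi''\bigl(\tfrac m2\bigr)$, contradicting \eqref{e:condonpsi} -- so the ``hard part'' you defer is not contained in \cite{mvn} but must be supplied by you, and completing it essentially reproduces the paper's argument with $P_{g_\varepsilon}$ in place of $\nu^{-1}\circ\widehat h$. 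Until (a)--(c) are carried out, the proof is incomplete.
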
 

The remaining part of this section is devoted to the proof of the above proposition.
This proof uses the following result of Martynchuk and V\~u Ng\d{o}c \cite{mvn} about diffeomorphisms which preserve a Hamiltonian on the plane having a critical point which, in Arnold's notation \cite{arn76}, is an $A_{2k-1}$ singularity with $k\geq 2$.  

\begin{lemma}
\label{l:marvu}
Let $\omega_0$, $\omega_1$ be smooth symplectic forms on $\R^2$ and, given an integer $k\geq 2$, consider the function $K:\R^2 \rightarrow \R$ given by
\[
K(x,y) = x^{2k} + y^2.
\]
Let $f: \R^2 \rightarrow \R^2$ be an orientation preserving smooth diffeomorphism fixing the origin and such that $f^* \omega_1=\omega_0$ and $f^* K = K$. Then the following alternative holds:
\begin{enumerate}[$(i)$]
\item The differential of $f$ at the origin has the form
\[
df(0) = \left( \begin{array}{cc} 1 & c \\ 0 & 1 \end{array} \right)
\]
for some $c\in \R$, and there exists a smooth function $u$ on a neighborhood of the origin such that
\[
dK \wedge du = \omega_1 - \omega_0.
\]
\item The differential of $f$ at the origin has the form
\[
df(0) = \left( \begin{array}{cc} -1 & c \\ 0 & -1 \end{array} \right)
\]
for some $c\in \R$, and there exists a smooth function $u$ on a neighborhood of the origin such that
\[
dK \wedge du = \omega_1 - \iota^* \omega_0,
\]
where $\iota: \R^2 \rightarrow \R^2$ is the involution $\iota(x,y) = (-x,-y)$.
\end{enumerate}
\end{lemma}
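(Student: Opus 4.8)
The plan is to prove the two assertions in each alternative separately—the shape of $df(0)$, and the solvability of $dK\wedge du=\omega_1-\omega_0$ (resp.\ $=\omega_1-\iota^*\omega_0$)—and to reduce case $(ii)$ to case $(i)$ at the very end.

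\smallskip
\textbf{Step 1 (the linear part).} First I would pin down $df(0)$ by a quasi-homogeneous leading-order analysis of the identity $K\circ f=K$. Assign the weights $w(x)=1$ and $w(y)=k$, so that $K$ is quasi-homogeneous of weight $2k$. Writing $f=(f_1,f_2)$ and decomposing each $f_i$ into its quasi-homogeneous parts, one checks that if $f_2$ had a nonzero part of some weight $j<k$, then $f_2^2$ would contain a nonzero part of weight $2j<2k$, and this part could not be cancelled in $K\circ f$ because $f_1^{2k}$ has weight $\ge 2k$; since $K$ has no part of weight $<2k$, this is impossible. Hence the lowest weight appearing in $f_2$ is $k$, its weight-$k$ part is $\mu x^{k}+dy$, and the weight-$1$ part of $f_1$ is $ax$. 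In particular the $(2,1)$-entry of $df(0)$ vanishes (as $w(x)=1<k$), so $df(0)=\left(\begin{smallmatrix}a&b\\0&d\end{smallmatrix}\right)$, the off-diagonal entry $b$ being the free constant denoted $c$ in the statement. Matching the weight-$2k$ parts in $K\circ f=K$ gives $a^{2k}x^{2k}+(\mu x^{k}+dy)^2=x^{2k}+y^2$, whence $d^2=1$, $\mu=0$ and $a^{2k}=1$ with $a\neq0$; thus $a,d\in\{\pm1\}$. Since $f$ preserves orientation, $ad=\det df(0)>0$, leaving exactly $(a,d)=(1,1)$ and $(a,d)=(-1,-1)$, which are the two matrices of $(i)$ and $(ii)$.

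\smallskip
\textbf{Step 2 (flow conjugacy and equality of periods).} For the construction of $u$ it suffices to treat the unipotent case $(i)$, the reduction of $(ii)$ being carried out in Step~4. Let $X_i$ be the Hamiltonian vector field of $K$ for $\omega_i$, defined by $\imath_{X_i}\omega_i=dK$. From $f^*\omega_1=\omega_0$ and $f^*K=K$ one deduces $f_*X_0=X_1$: indeed $\imath_{f_*X_0}\omega_1$ pulls back under $f$ to $\imath_{X_0}\omega_0=dK=f^*dK$, hence $\imath_{f_*X_0}\omega_1=dK$. Because $f^*K=K$, the map $f$ sends each level set $K^{-1}(c)$ to itself, and for small $c>0$ this is a single smooth oval. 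Writing $\Phi_i^t$ for the flow of $X_i$, the conjugacy $f\circ\Phi_0^t=\Phi_1^t\circ f$ evaluated at $t$ equal to the $\omega_0$-period $T_0(c)$ shows that $T_0(c)$ is a period of $X_1$ on that oval, and symmetrically; hence $T_0(c)=T_1(c)$ for all small $c>0$.

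\smallskip
\textbf{Step 3 (the cohomological equation away from the origin).} Writing $\omega_i=\rho_i\,dx\wedge dy$ and letting $Z=K_x\partial_y-K_y\partial_x$ be the standard Hamiltonian field of $K$, the sought identity $dK\wedge du=\omega_1-\omega_0$ is equivalent to the transport equation $Zu=\rho_1-\rho_0$ along the closed orbits of $Z$. A solution along a given orbit exists if and only if the integral of $\rho_1-\rho_0$ over its period vanishes; since the reparametrization $X_i=\rho_i^{-1}Z$ yields $T_i(c)=\int_0^{\widehat T(c)}\rho_i\,dt$, with $\widehat T(c)$ the $Z$-period, this integral equals $T_1(c)-T_0(c)=0$ by Step~2. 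Taking $\{x=0,\ y>0\}$ as a transversal to $Z$, setting $u=0$ there and integrating $\rho_1-\rho_0$ along $Z$ then defines a smooth single-valued solution $u$ on a punctured neighborhood of the origin, unique up to an additive function of $K$. The target two-form does vanish at the origin: since $\det df(0)=1$ in case $(i)$, one has $\omega_0(0)=df(0)^*\omega_1(0)=\omega_1(0)$, consistent with $dK(0)=0$.

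\smallskip
\textbf{Step 4 (smoothness at the origin and reduction of $(ii)$).} The main obstacle is to show that $u$, after a suitable choice of the still-free additive function of $K$, extends smoothly across the degenerate critical point; this is precisely where the $A_{2k-1}$ structure is felt, and I would control it through the singularity normal form together with Glaeser-type derivative estimates of the kind recalled in Remark~\ref{r:smooth?}, which is the technical heart supplied by the argument of Martynchuk and V\~u Ng\d{o}c. Finally I would reduce case $(ii)$ to case $(i)$: given $f$ with $df(0)=\left(\begin{smallmatrix}-1&c\\0&-1\end{smallmatrix}\right)$, set $\tilde f=\iota\circ f$, which is orientation preserving, fixes the origin with unipotent differential $\left(\begin{smallmatrix}1&-c\\0&1\end{smallmatrix}\right)$, and satisfies $\tilde f^*K=K$ and $\tilde f^*(\iota^*\omega_1)=f^*\omega_1=\omega_0$. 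Applying case $(i)$ to $\tilde f$ with the pair of forms $\omega_0$ and $\iota^*\omega_1$ produces $u'$ with $dK\wedge du'=\iota^*\omega_1-\omega_0$; pulling back by $\iota$ and using $\iota^*K=K$ gives $dK\wedge d(u'\circ\iota)=\omega_1-\iota^*\omega_0$, which is the assertion of $(ii)$ with $u=u'\circ\iota$.
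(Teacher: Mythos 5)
Your Steps 1--3 and the final $\iota$-reduction are correct, and Step 1 is even a nice self-contained addition: the quasi-homogeneous jet analysis with weights $w(x)=1$, $w(y)=k$ gives an elementary proof of the dichotomy for $df(0)$, which the paper does not reprove but simply quotes from \cite[Lemma 2.4]{mvn}. Likewise, $f_*X_0=X_1$, the equality of minimal periods $T_0=T_1$, the reformulation as the transport equation $Zu=\rho_1-\rho_0$, its solvability along each oval by vanishing of the period discrepancy, and the reduction of case $(ii)$ to case $(i)$ via $\tilde f=\iota\circ f$ (exactly the paper's $f\circ\iota$ device) are all sound.

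The genuine gap is Step 4, which is the heart of the lemma and for which you offer only the intention to use ``the singularity normal form together with Glaeser-type derivative estimates''. This is not a fixable detail of the same kind as the rest: the strategy as laid out cannot work, because after Steps 2--3 you have discarded the conjugacy $f$ and retained only the period equality, and period data provably cannot decide smoothness of $u$ at the origin. Indeed, the vector field $Z$ is $\iota$-equivariant and $\iota$ preserves each oval, so $\omega_0$ and $\iota^*\omega_0$ have \emph{identical} period functions; yet in general only one of the two equations $dK\wedge du=\omega_1-\omega_0$ and $dK\wedge du=\omega_1-\iota^*\omega_0$ is smoothly solvable near $0$. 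Already at first order: since $dK\wedge du=(2kx^{2k-1}u_y-2yu_x)\,dx\wedge dy$ and $k\geq 2$, smooth solvability forces $\partial_x(\rho_1-\rho_0)(0)=0$ in the first case and $\partial_x\rho_1(0)+\partial_x\rho_0(0)=0$ in the twisted case, conditions that differ whenever $\partial_x\rho_0(0)\neq 0$; infinitely many higher jet obstructions follow. So any correct completion of Step 4 must re-inject the conjugacy $f$ itself (in particular the unipotency of $df(0)$), which your sketch never does; proving instead ``equal periods $\Rightarrow$ smooth solvability'' would amount to reproving the main rigidity theorem of Martynchuk--V\~u Ng\d{o}c, far more than the lemma needs. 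The paper's proof sidesteps the analysis entirely: \cite[Lemma 2.4]{mvn} yields, beyond the linearization dichotomy, a smooth isotopy $\{f_t\}$ with $f_t^*K=K$, $f_0=\mathrm{id}$ and $f_1=f$ (case $(i)$) or $f_1=f\circ\iota$ (case $(ii)$), and \cite[Proposition 2.3]{mvn} is a Moser-type equivalence between the existence of such an isotopy with $f_1^*\omega_1=\omega_0$ (resp.\ $\iota^*\omega_0$) and the smooth local solvability of the cohomological equation; there the smoothness of $u$ at the origin comes from integrating $\imath_{Y_t}\omega_1$ along the isotopy together with a division argument, not from estimates on the blowing-up period $T(c)\sim c^{\frac{1}{2k}-\frac{1}{2}}$. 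To repair your proof you must either carry out the hard extension analysis honestly or, as the paper does, invoke these two statements of \cite{mvn} explicitly.
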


\begin{proof}
Since $f$ is assumed to be an orientation preserving diffeomorphism leaving the function $K$ invariant, \cite[Lemma 2.4]{mvn} implies that the differential of $f$ at the origin has one of the two forms listed in  (i) and (ii), and that there is a smooth isotopy $\{f_t\}_{t\in [0,1]}$ such that $f_0=\mathrm{id}$, $f_t^* K = K$ for every $t\in [0,1]$, and $f_1$ is either $f$ -- if alternative (i) holds -- or $f\circ \iota$ -- if alternative (ii) holds. By \cite[Proposition 2.3]{mvn}, the existence of a smooth isotopy $f_t$ preserving $K$, with $f_0=\mathrm{id}$, and such that $f_1^* \omega_1 = \omega$, is equivalent to the existence of a smooth local solution near the origin to the cohomological equation
\[
 dK \wedge du = \omega_1 - \omega,
\]
and the conclusion follows from the fact that in our case $f_1^* \omega_1$ is either $\omega_0$ -- in case (i) -- or $\iota^* \omega_0$ -- in case (ii).
\end{proof}

We denote by $W_{\varepsilon}$ the quotient of $T^1_{g_{\varepsilon}} S^2$ by the $S^1$-action and by $H_{\varepsilon}$ and $\omega_{\varepsilon}$ the Hamiltonian and the symplectic form on the quotient two-sphere $W_{\varepsilon}$ induced by the $S^1$-invariant Hilbert contact form $\alpha_{g_{\varepsilon}}$, as in Section \ref{s:regular}. In the coordinates $(\sigma,\beta)$ on $W_{\varepsilon}$ which are induced by the  geodesic polar coordinates on $T^1_{g_{\varepsilon}} S^2$, by \eqref{e:HH} and \eqref{e:omega} we have
\begin{equation}
\label{e:down}
H_{\varepsilon}(\sigma,\beta) = r_{\varepsilon}(\sigma) \cos \beta, \qquad \omega_{\varepsilon} = r_{\varepsilon}(\sigma) \, d\beta \wedge d\sigma.
\end{equation}
We identify the surfaces $W_{\varepsilon}$ with the same surface $W$ by using these global coordinates, and simplify the notation by setting $H := H_0$ and $\omega:= \omega_0$, which is consistent with the fact that $r_0=r$. 

For every $\varepsilon\in (-1,1)$, the component $\Gamma_{g_{\varepsilon}}^+$ of $\Gamma_{g_{\varepsilon}}$ on which $\beta=0$ projects to the point $\Gamma^+:= (\frac{m}{2},0)$ in $W$.

Since we are assuming that the curvature of the $\Z_2$-symmetric analytic metric $g$ vanishes at the equator, \eqref{e:curvature} implies that the profile function $r$ has a degenerate maximum at $\frac{m}{2}$, and hence $r-r_{\max}$ vanishes with order $2k$, $k\geq 2$, at $\frac{m}{2}$. Standard results about normal forms of analytic functions show that the function $H(\sigma,\beta) = r(\sigma) \cos \beta$ has an $A_{2k-1}$ singularity at $\Gamma^+$. This is the content of the next lemma, 
of which we provide a self-contained proof for the sake of completeness.

\begin{lemma}
\label{l:nf}
There exists an analytic diffeomorphism $\Psi$ from a neighborhood of the origin in $\R^2$ to a neighborhood of $\Gamma^+$ in $W$ such that $\Psi(0) = \Gamma^+$ and
\begin{equation}
\label{e:singularity}
H(\Psi(x,y)) = r_{\max} - x^{2k} - y^2.
\end{equation}
Moreover, denoting by $\iota$ and $\jmath$ the involutions $\iota(x,y) = (-x,-y)$ and $\jmath(\sigma,\beta) = (m-\sigma,-\beta)$ on $\R^2$ and $W$, we have $\Psi\circ \iota = \jmath \circ \Psi$.
\end{lemma}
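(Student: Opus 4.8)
The plan is to separate the two coordinate directions near $\Gamma^+$: the $\beta$-direction, in which $H$ has a nondegenerate maximum, and the $\sigma$-direction, which carries the degeneracy responsible for the $A_{2k-1}$ singularity. Because the Hamiltonian has the product form $H(\sigma,\beta) = r(\sigma)\cos\beta$, this separation can be made completely explicit, so no abstract splitting lemma is needed; the normal form will be assembled directly.

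First I would straighten the $\beta$-direction. For each fixed $\sigma$ near $\frac{m}{2}$, the map $\beta \mapsto H(\sigma,\beta)$ has a nondegenerate maximum $r(\sigma)$ at $\beta = 0$, and one computes $r(\sigma) - H(\sigma,\beta) = r(\sigma)(1 - \cos\beta) = 2 r(\sigma)\sin^2(\beta/2)$. I would therefore set
\[
y := \sqrt{2 r(\sigma)}\, \sin(\beta/2),
\]
which is analytic near $(\frac{m}{2}, 0)$ since $r > 0$ there, and which gives $H(\sigma,\beta) = r(\sigma) - y^2$ identically.

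Next I would straighten the $\sigma$-direction, and here the $\Z_2$-symmetry $r(m-\sigma) = r(\sigma)$ enters decisively. Since $r$ is symmetric about $\frac{m}{2}$ and $r_{\max} - r$ vanishes there to order $2k$, the even analytic function $t \mapsto r_{\max} - r(\frac{m}{2} + t)$ factors as $t^{2k}$ times an even analytic function, so that
\[
r_{\max} - r(\sigma) = (\sigma - {\textstyle\frac{m}{2}})^{2k}\, a(\sigma),
\]
with $a$ analytic, strictly positive, and even about $\frac{m}{2}$. Setting $x := (\sigma - \frac{m}{2})\, a(\sigma)^{1/(2k)}$ — analytic thanks to $a > 0$ — yields $x^{2k} = r_{\max} - r(\sigma)$, and combining with the previous step gives $H = r_{\max} - x^{2k} - y^2$. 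The map $(\sigma,\beta)\mapsto (x,y)$ has a diagonal, invertible Jacobian at $\Gamma^+ = (\frac{m}{2}, 0)$, so by the analytic inverse function theorem its inverse $\Psi$ is the desired analytic diffeomorphism with $\Psi(0) = \Gamma^+$ and \eqref{e:singularity}.

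Finally, the intertwining $\Psi \circ \iota = \jmath \circ \Psi$ would follow by checking that the forward map $\Phi := \Psi^{-1}$ satisfies $\Phi \circ \jmath = \iota \circ \Phi$, i.e.\ that both $x$ and $y$ are odd under $\jmath(\sigma,\beta) = (m-\sigma,-\beta)$; this uses $a(m-\sigma) = a(\sigma)$ (evenness of $a$) together with $r(m-\sigma) = r(\sigma)$ and the oddness of $\sin$. I expect the only genuine subtlety — the main obstacle — to lie in the degenerate $\sigma$-direction, namely in obtaining an analytic $2k$-th root $a^{1/(2k)}$. This is precisely where the symmetry is indispensable: it forces $r_{\max} - r$ to vanish to even order with an even, strictly positive cofactor $a$, which simultaneously makes the real root analytic and guarantees that $x$ is odd, so that the symmetry relation holds.
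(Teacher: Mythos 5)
Your proof is correct and follows essentially the same route as the paper's: factor $r_{\max}-r(\sigma)=(\sigma-\frac{m}{2})^{2k}a(\sigma)$ with $a$ analytic, positive, and even about $\frac{m}{2}$, take analytic roots to define the forward map, invert by the analytic inverse function theorem, and deduce $\Psi\circ\iota=\jmath\circ\Psi$ from the evenness of the coefficients; your choice $y=\sqrt{2r(\sigma)}\sin(\beta/2)$, which separates the variables exactly so that $x$ depends on $\sigma$ alone, is a mild simplification of the paper's regrouping, where the cross term is instead absorbed into a two-variable coefficient $A(\sigma,\beta)$. One small correction to your closing remark: the analyticity of $a^{1/(2k)}$ needs only the positivity of $a$, which follows from $\frac{m}{2}$ being a maximum point (forcing the vanishing order to be even and the cofactor positive) rather than from the $\Z_2$-symmetry, whose real role is limited to making $a$ even and hence $x$ odd under $\jmath$.
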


\begin{proof}
Since the non-negative analytic function $r_{\max} - r$ vanishes with order $2k$ at $\frac{m}{2}$, we have
\[
r(\sigma) = r_{\max} - a(\sigma) \bigl(\sigma - {\textstyle \frac{m}{2}}  \bigr)^{2k},
\]
for some positive analytic function $a$ near $\frac{m}{2}$. By the $\Z_2$-symmetry of $g$, we have $a(m-\sigma)=a(\sigma)$. Similarly
\[
\cos \beta = 1 - b(\beta) \beta^2,
\]
near $0$, where the function $b$ is analytic, even, and $b(0)= \frac{1}{2}$. Therefore
\[
H(\sigma,\beta) = r(\sigma) \cos \beta = r_{\max} - a(\sigma) \bigl(\sigma - {\textstyle \frac{m}{2}}  \bigr)^{2k} - r_{\max} b(\beta) \beta^2 - a(\sigma) b(\beta) \bigl(\sigma - {\textstyle \frac{m}{2}}  \bigr)^{2k}\beta^2.
\]
We can regroup terms and write
\[
H(\sigma,\beta) =  r_{\max} - A(\sigma,\beta)  \bigl(\sigma - {\textstyle \frac{m}{2}}  \bigr)^{2k} - B(\sigma,\beta) \beta^2,
\]
where
\[
A(\sigma,\beta) := a(\sigma) ( 1 + b(\beta)\beta^2 ), \qquad B(\sigma,\beta) := r_{\max} b(\beta).
\]
Since the analytic functions $A$ and $B$ are positive near $\Gamma^+$, the map
\[
\Phi(\sigma,\beta) := \bigl( A(\sigma,\beta)^{\frac{1}{2k}}  (\sigma - {\textstyle \frac{m}{2}} ) , B(\sigma,\beta)^{\frac{1}{2}} \beta \bigr)
\]
is analytic in a neighborhood of $\Gamma^+$. It maps $\Gamma^+$ to the origin in $\R^2$ and its differential at $\Gamma^+$ is readily seen to be invertible. Therefore, $\Phi$ is a local diffeomorphism at $\Gamma^+$ and, if we define $\Psi$ to be its inverse on a neighborhood of the origin in $\R^2$, \eqref{e:singularity} holds. The symmetry properties of $a$ and $b$ imply that $\Phi\circ \jmath = \iota \circ \Phi$, and hence $\Psi\circ \iota = \jmath \circ \Psi$.
\end{proof}

After these preliminaries, we can prove Proposition \ref{p:notsmooth}. Fix some $\varepsilon\in (-1,1) \setminus \{0\}$ and assume by contradiction that 
\[
h: T_g^1 S^2 \rightarrow T_{g_{\varepsilon}}^1 S^2
\] 
is a smooth orientation preserving $S^1$-equivariant conjugacy from the geodesic flow of $g$ to the one of $g_{\varepsilon}$. By a standard application of Moser's homotopy argument, we can assume that $h^* \alpha_{g_{\varepsilon}} = \alpha_g$. Then $h^* C_{g_{\varepsilon}} = C_g$, and hence $h$ maps the component $\Gamma^+_g$ of $\Gamma_g$ to the component $\Gamma^+_{g_{\varepsilon}}$ of $\Gamma_{g_{\varepsilon}}$. Recall that we are identifying the quotients of the different unit tangent bundles by the $S^1$-action with the same surface $W$, and that the above components project to the same point $\Gamma^+=(\frac{m}{2},0)$ in $W$. Being an $S^1$-equivariant diffeomorphism such that $h^* \alpha_{g_{\varepsilon}} = \alpha_g$, $h$ induces an orientation preserving diffeomorphism $\widehat{h}: W \rightarrow W$ fixing $\Gamma^+$ and such that
\[
\widehat{h}^* H_{\varepsilon} = H, \qquad \widehat{h}^* \omega_{\varepsilon} = \omega.
\]
Consider the orientation preserving diffeomorphism 
\[
\nu: W \rightarrow W, \qquad \nu (\sigma,\beta) = (\varphi_{\varepsilon}^{-1}(\sigma),\beta) = (\sigma + \varepsilon\, \psi(\sigma), \beta).
\]
By \eqref{e:down}, we have  
\[
\nu^* H_{\varepsilon} = H, \qquad \nu^* \omega_{\varepsilon} = \tilde{\omega}:= r(\sigma) ( 1 + \varepsilon\, \psi'(\sigma))\, d\beta \wedge d\sigma,
\]
and the diffeomorphism $f:= \nu^{-1} \circ \widehat{h}$ satisfies
\[
f^* H =H, \qquad f^* \tilde{\omega} = \omega.
\]
By Lemma \ref{l:nf}, $H$ has an $A_{2k-1}$ singularity at $\Gamma^+$ and hence Lemma \ref{l:marvu} can be applied to the orientation preserving diffeomorphism $f$. Since the local chart $\Psi$ from Lemma \ref{l:nf} intertwines the involutions $\iota$ and $\jmath$, and since $\jmath^* \omega = \omega$, both alternatives of Lemma \ref{l:marvu} give us a smooth local solution $u$ near $\Gamma^+$ of the same equation
\[
dH \wedge du = \tilde\omega - \omega.
\]
We will reach a contradiction by showing that the above equation does not have a solution which is twice differentiable at $\Gamma^+$. In the coordinates $(\sigma,\beta)$, the above equation reads
\begin{align*}
\partial_{\sigma} H (\sigma,\beta) \,  \partial_{\beta} u (\sigma,\beta) \, -
\partial_{\beta} H (\sigma,\beta) \, \partial_{\sigma} u (\sigma,\beta) 
= - \varepsilon\, r(\sigma) \, \psi'(\sigma),
\end{align*}
and, substituting the expression~\eqref{e:down} of the Hamiltonian $H=H_0$,
\begin{align*}
r'(\sigma)\,  \cos \beta \,  \partial_{\beta} u (\sigma,\beta) + r(\sigma) \, \sin \beta  \, \partial_{\sigma} u (\sigma,\beta) 
= - \varepsilon\, r(\sigma) \, \psi'(\sigma).
\end{align*}
Evaluation at $\beta=0$ gives us
\begin{align*}
r'(\sigma)\,  \partial_{\beta} u (\sigma,0) = - \varepsilon\, r(\sigma) \, \psi'(\sigma),
\end{align*}
and differentiation in $\sigma$ produces
\begin{align*}
r''(\sigma)\,  \partial_{\beta} u (\sigma,0) + r'(\sigma) \, \partial_{\sigma\beta} u (\sigma,0) = - \varepsilon\, r'(\sigma) \, \psi'(\sigma) -  \varepsilon\, r(\sigma) \, \psi''(\sigma).
\end{align*}
Since $\frac{m}{2}$ is a degenerate critical point of $r$, evaluation at $\sigma = \frac{m}{2}$ gives us
\[ 
0 = - \varepsilon \, r_{\max} \, \psi'' \bigl( {\textstyle \frac{m}{2}} \bigr).
\]
The above identity contradicts \eqref{e:condonpsi} and concludes the proof of Proposition \ref{p:notsmooth}.

\section{Unstable equators}
\label{s:unstable}

Let $g\in \mathcal{G}$ be a metric with associated unimodal profile function $r: [0,m] \rightarrow [0,+\infty)$ such that $\max r = r(\sigma_{\max}) = 1$. Given $\beta\in \R/2\pi \Z$, we consider a unit tangent vector $v_{\beta}=(\sigma_{\max},\theta_0,\beta)$ based at the equator and forming  an angle $\beta$ with its positively oriented tangent vector, and the corresponding geodesic
\[
\gamma(t) = (\sigma(t),\theta(t))
\]
such that $\dot\gamma(0)=v_\beta$. The angular discrepancy function
\[
\Delta(t,\beta) := \theta(t) - \theta(0) - t \cos \beta
\]
satisfies $\Delta (t,0)=\Delta(t,\pi)=0$ for every $t\in \R$.
In Lemma \ref{l:stability}, we proved that if the curvature of $g$ is non-negative near the equator then
\[
\lim_{\beta\downarrow 0} \max_{t\in [0,\tau(v_\beta)]} | \Delta(t,\beta)| = 0.
\]
In this section, we show that if the curvature is allowed to be somewhere negative near the equator -- something which can happen only if the curvature has infinite order of vanishing at the equator -- then the above stability property of the equator over possibly large time intervals may fail. Indeed, we shall construct examples for which
\[
\limsup_{\beta\downarrow 0} \Delta(\tau_1(\beta),\beta) = +\infty,
\]
where, as in Section \ref{s:continuous}, $\tau_1(\beta)$ denotes the first $t>0$ such that $\dot\gamma(t)$ is tangent to a parallel (note that $2\tau_1(\beta)$ is the first positive time at which $\gamma$ meets the equator, and hence  $2\tau_1(\beta)< \tau(v_{\beta})$). The same construction will also give us metrics for which $\tau(v_{\beta})$ diverges arbitrarily fast along suitable sequences $\beta_k\downarrow 0$.

We start by considering two smooth non-negative functions $f$ and $h$  on $[0,\infty)$ such that:
\begin{enumerate}[(i)]
\item $f'(s) \geq 0$ and $h'(s)>0$ for all $s>0$;
\item $f^{(n)} (0) = h^{(n)} (0) = 0$ for all integers $n\geq 0$;
\item $f'=0$ on the interval $[2^{-2k-1},2^{-2k}]$, for all integers $k\geq 1$; 
\item $\lim_{k \rightarrow \infty} \frac{2^{2k} \sqrt{h(2^{-2k})}}{f(2^{-2k})} = 0$.
\end{enumerate} 

Here is a possible construction of two functions with these properties. Let $\chi\in C^{\infty}(\R)$ be a non-negative function with support in $(0,1)$ and integral 1. The non-negative function
\begin{equation}
\label{e:f}
f(s) := \int_0^s \sum_{j=1}^{\infty} j^{-j} \chi( 2^{2j} (r-2^{-2j}) )\, dr
\end{equation}
is smooth on $[0,+\infty)$ and satisfies properties (i), (ii), and (iii). Moreover,
\[
f(2^{-2k}) = \sum_{j=k+1}^{\infty} 2^{-2j} j^{-j} \geq 2^{-2(k+1)} (k+1)^{-(k+1)},
\]
and the function
\begin{equation}
\label{e:h}
h(s) := e^{-\frac{1}{s}} \quad \mbox{for } s>0, \qquad h(0):=0,
\end{equation}
which is smooth on $[0,+\infty)$ and satisfies (i), tends to zero so fast for $s\rightarrow 0$ that (iv) holds.

\begin{proposition}
\label{p:esplode}
Let $r: [0,m] \rightarrow [0,+\infty)$ be a unimodal profile function with maximum at $\sigma_{\max}\in (0,m)$ and such that
\begin{equation}
\label{e:adx}
r(\sigma_{\max} + s) = 1 - f(s) - h(s) \qquad \forall s\in [0,s_0],
\end{equation}
where $f$ and $h$ satisfy conditions $(i)$-$(iv)$ above and $0< s_0 < m-\sigma_{\max}$. Then
\[
\limsup_{\beta \downarrow 0} \Delta(\tau_1(\beta),\beta) = +\infty.
\]
\end{proposition}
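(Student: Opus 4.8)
The plan is to convert $\Delta(\tau_1(\beta),\beta)$ into an explicit one-dimensional integral governed by the profile $r$ just north of the equator, and then to produce a sequence $\beta_k\downarrow 0$ along which this integral diverges, exploiting the plateaus of $f$ from condition (iii) against the super-flatness of $h$ from condition (iv). First I would specialize the computation of Lemma \ref{l:calcoli} to the turning time $t_0=\tau_1(\beta)$. At this instant $\gamma$ is tangent to its northern turning parallel, so $\sin\beta(\tau_1)=0$; since Clairaut's integral forces $\cos\beta>0$ all along the orbit, the tangency value is $\cos\beta(\tau_1)=+1$, and the lower limit $\tfrac{\cos^2\beta}{\cos^2\beta(\tau_1)}$ in Lemma \ref{l:calcoli} becomes $\cos^2\beta$. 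Combining the two formulas of that lemma exactly as in the proof of Lemma \ref{l:stability} (recalling $r_{\max}=1$ in this section) gives
\[
\Delta(\tau_1(\beta),\beta)=\frac{\cos\beta}{2}\int_{\cos^2\beta}^1\Bigl(1-\frac1u\Bigr)\frac{\sigma_N'(\sqrt u)}{\sqrt{u-\cos^2\beta}}\,du,
\]
which is positive because $\sigma_N'<0$ and $1-\tfrac1u<0$ on $(\cos^2\beta,1)$.

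Next I would pass to the arc length $s=\sigma-\sigma_{\max}\ge 0$ north of the equator, setting $\rho=r(\sigma_{\max}+s)=1-f(s)-h(s)$, so that $u=\rho^2$ and $\sigma_N'(\rho)\,d\rho=ds$. Writing $w:=f+h$ (strictly increasing by (i)), $\epsilon:=1-\cos\beta$, and letting $s_\beta$ be the turning value determined by $w(s_\beta)=\epsilon$, this recasts the integral as
\[
\Delta(\tau_1(\beta),\beta)=\cos\beta\int_0^{s_\beta}\frac{(1-\rho^2)/\rho}{\sqrt{\rho^2-\cos^2\beta}}\,ds.
\]
Using the elementary identities $1-\rho^2=w(2-w)\ge w$, $\rho\le 1$, and $\rho^2-\cos^2\beta=(\epsilon-w)(2-w-\epsilon)\le 2(\epsilon-w)$, together with $\cos\beta\ge\tfrac12$ for $\beta$ small, I would obtain the clean lower bound
\[
\Delta(\tau_1(\beta),\beta)\ge\frac{1}{2\sqrt2}\int_0^{s_\beta}\frac{w(s)}{\sqrt{\epsilon-w(s)}}\,ds.
\]

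Finally I would choose $\beta_k$ by $\cos\beta_k=r(\sigma_{\max}+2^{-2k})=1-w(2^{-2k})$, so that $s_{\beta_k}=2^{-2k}$, $\epsilon_k=w(2^{-2k})$, and $\beta_k\downarrow 0$ since $w(2^{-2k})\to 0$. Restricting the integral to the plateau $[2^{-2k-1},2^{-2k}]$, condition (iii) keeps $f$ constant there, so $w(s)=f(2^{-2k})+h(s)\ge f(2^{-2k})$ while $\epsilon_k-w(s)=h(2^{-2k})-h(s)\le h(2^{-2k})$; hence the integrand is at least $f(2^{-2k})/\sqrt{h(2^{-2k})}$, and integrating over an interval of length $2^{-2k-1}$ yields
\[
\Delta(\tau_1(\beta_k),\beta_k)\ge\frac{1}{4\sqrt2}\,\frac{2^{-2k}f(2^{-2k})}{\sqrt{h(2^{-2k})}}.
\]
This tends to $+\infty$ since the right-hand side is a constant times the reciprocal of the quantity in condition (iv). As $\beta_k\downarrow 0$, this establishes $\limsup_{\beta\downarrow 0}\Delta(\tau_1(\beta),\beta)=+\infty$.

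The main obstacle is ensuring that the very crude pointwise bound on the plateau is nonetheless sharp enough to diverge: the competition is between the numerator $w\gtrsim f(2^{-2k})$ and the denominator $\sqrt{\epsilon_k-w}\le\sqrt{h(2^{-2k})}$, and it is precisely the flatness hypothesis (iii) that cancels the $f$-contribution inside $\epsilon_k-w$, collapsing the denominator to the super-flat quantity $\sqrt{h(2^{-2k})}$. Condition (iv) is then exactly the statement that the resulting ratio, weighted by the plateau length $2^{-2k}$, diverges. One should also check that the change of variables stays inside $[\sigma_{\max},\sigma_{\max}+s_0]$, where the description \eqref{e:adx} is valid; this is automatic because $s_{\beta_k}=2^{-2k}\to 0$.
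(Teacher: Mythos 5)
Your proposal is correct and takes essentially the same route as the paper's proof: both reduce $\Delta(\tau_1(\beta),\beta)$ via Lemma \ref{l:calcoli} to a positive integral of the form $\int_0^{s_\beta} (f+h)/\sqrt{\epsilon-f-h}\,ds$ up to harmless factors, choose $\beta_k$ so that the turning value is exactly $2^{-2k}$, restrict to the plateau $[2^{-2k-1},2^{-2k}]$ where condition $(iii)$ collapses the denominator to $\sqrt{h(2^{-2k})}$, and conclude by condition $(iv)$. The only differences are cosmetic (you pass through the $u$-variable and back, and use slightly different elementary bounds on the prefactor), so nothing further is needed.
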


\begin{proof}
By the computations in the proof of Lemma \ref{l:calcoli}, we have
\[
\begin{split}
 \Delta(\tau_1(\beta),\beta)  &= \cos \beta \int_{\sigma_{\max}}^{\sigma(\tau_1(\beta))} \left( \frac{1}{r(s)} - r(s) \right) \frac{1}{\sqrt{r(s)^2 - \cos^2 \beta_0}}\, ds \\ &= \cos \beta \int_{\sigma_{\max}}^{\sigma(\tau_1(\beta))} \frac{1+r(s)}{r(s) \sqrt{r(s) + \cos \beta}} \frac{1-r(s)}{\sqrt{r(s)-\cos \beta}}\, ds.
\end{split}
\]
By the conservation of Clairaut's first integral and by the fact that $\dot\gamma(\tau_1(\beta))$ is tangent to a parallel, we have
\begin{equation}
\label{e:chiaro}
r(\sigma(\tau_1(\beta))) = \cos \beta,
\end{equation}
and in the above integral the function $r$ takes values in the interval $[c,1]$ with $c:= \cos \beta$. As we are interested in limit for $\beta\rightarrow 0$, we can assume that $c>0$ and from the estimate
\[
\frac{1+\rho}{\rho \sqrt{\rho+c}} \geq \frac{2}{ \sqrt{1+c}} \geq \sqrt{2} > 1 \qquad \forall \rho\in [c,1]
\]
we deduce
\[
\begin{split}
 \Delta(\tau_1(\beta),\beta)  &\geq  \cos \beta \int_{\sigma_{\max}}^{\sigma(\tau_1(\beta))}  \frac{1-r(s)}{\sqrt{r(s)-\cos \beta}}\, ds \\ &=  \cos \beta \int_{\sigma_{\max}}^{\sigma(\tau_1(\beta))}  \frac{1-r(s)}{\sqrt{r(s)- r(\sigma(\tau_1(\beta)))}}\, ds,
 \end{split}
\]
where we used \eqref{e:chiaro}.
Note that the positive function $S(\beta):= \sigma(\tau_1(\beta)) - \sigma_{\max}$ tends to zero for $\beta\downarrow 0$. If $\beta>0$ is so small that $S(\beta)\leq s_0$, then by \eqref{e:adx} we have
\[
 \int_{\sigma_{\max}}^{\sigma(\tau_1(\beta))}  \frac{1-r(s)}{\sqrt{r(s)- r(\sigma(\tau_1(\beta)))}}\, ds =
\int_0^{S(\beta)} \frac{f(s) + h(s)}{\sqrt{f(S(\beta)) + h(S(\beta))-f(s)-h(s)}}\, ds.
\]
If $k\in \N$ is large enough, we can find $\beta_k>0$ forming an infinitesimal sequence such that $S(\beta_k) = 2^{-2k}$. Using the fact that the function $f$ is constant on the interval $[2^{-2k-1},2^{-2k}]$, we obtain the lower bound
\[
\begin{split}
\int_0^{S(\beta_k)} \frac{f(s) + h(s)}{\sqrt{f(S(\beta_k)) + h(S(\beta_k))-f(s)-h(s)}}\, ds \geq \int_{2^{-2k-1}}^{2^{-2k}} \frac{f(s)+h(s)}{\sqrt{h(2^{-2k})-h(s)}}\, ds \\ \geq  \int_{2^{-2k-1}}^{2^{-2k}} \frac{f(s)}{\sqrt{h(2^{-2k})}}\, ds = 2^{-2k-1} \frac{f(2^{-2k})}{\sqrt{h(2^{-2k})}}.
\end{split}
\]
Therefore,
\[
\Delta(\tau_1(\beta_k),\beta_k) \geq \frac{\cos \beta_k}{2} \frac{f(2^{-2k})}{2^{2k} \sqrt{h(2^{-2k})}},
\]
and the latter quantity diverges to $+\infty$ for $k\rightarrow \infty$ because $f$ and $h$ are assumed to satisfy (iv). 
\end{proof}

\begin{remark}
\label{r:nocontrol}
In Remark \ref{r:control}, we proved a universal asymptotic upper bound for $\tau(v_{\beta})$ for $\beta\downarrow 0$, which holds whenever the curvature near the equator is non-negative. The above profile functions provide examples in which $\tau(v_{\beta_k})$ diverges arbitrarily fast along suitable sequences $\beta_k\downarrow 0$. Indeed, let $\omega: (0,\frac{\pi}{2}) \rightarrow (0,+\infty)$ be an arbitrary monotonically decreasing function such that $\omega(\beta) \rightarrow +\infty$ for $\beta\downarrow 0$. Assume that $f$ and $g$ are smooth non-negative functions on $[0,+\infty)$ satisfying (i), (ii), (iii), and
\begin{enumerate}[(i)]
\setcounter{enumi}{4}
\item $\lim_{k\rightarrow \infty} 2^{2k} \omega\bigl(\arccos (1-f(2^{-2k}))\bigr) \sqrt{h(2^{-2k})} = 0.$
\end{enumerate}
Once $f$ has been chosen -- for instance as in \eqref{e:f} -- we can indeed find an $h$ satisfying (ii) and converging to 0 so fast for $s\rightarrow 0$ that (v) holds. Choosing $\beta_k$ as in the proof of Proposition \ref{p:esplode}, \eqref{e:chiaro} implies
\[
1- f(2^{2-k}) - h(2^{-2k}) = \cos \beta_k,
\]
and hence
\[
\cos \beta_k \leq 1 - f(2^{-2k}) \qquad \Rightarrow \qquad \beta_k \geq \arccos (1-f(2^{-2k})),\]
which by the monotonicity of $\omega$ implies
\[
 \qquad \omega(\beta_k) \leq \omega\bigl(  \arccos (1-f(2^{-2k}))\bigr).
\]
By \eqref{e:tempo}, arguing as in the above proof we find for $k$ large enough
\[
\begin{split}
\tau_1(\beta_k) = \int_{\sigma_{\max}}^{\sigma(\tau_1(\beta_k))} \frac{r(s)}{\sqrt{r(s)^2 - \cos^2 \beta_k}}\, ds &\geq \frac{1}{2} \int_{2^{-2k-1}}^{2^{-2k}} \frac{ds}{\sqrt{h(2^{-2k}) - h(s)}} \, ds\\ &\geq \frac{2^{-2(k+1)}}{\sqrt{h(2^{-2k})}},
\end{split}
\]
and hence
\[
\frac{\omega(\beta_k)}{\tau_1(\beta_k)} \leq 2^{2(k+1)} \omega\bigl(  \arccos (1-f(2^{-2k}))\bigr) \sqrt{h(2^{-2k})}.
\]
By (v), the latter quantity is infinitesimal and hence
\[
\frac{\tau(v_{\beta_k})}{\omega(\beta_k)} \geq \frac{\tau_1(\beta_k)}{\omega(\beta_k)} \rightarrow +\infty
\]
for $k\rightarrow \infty$.
\end{remark}

\renewcommand{\appendixname}{}
\appendix

\section{Determining a function from  the tangent lines to its graph}
\label{s:appA}

Given a differentiable function $f:I\to\R$ on an interval $I\subset \R$ with non-empty interior, we denote by
\[
\Gamma(f):=\{(x,y)\in \R^2\ |\ x\in I, \; y=f(x)\} 
\]
the graph of $f$ and by $\TT(f)$ the set of affine lines $\ell\subset\R^2$ that are tangent to $\Gamma(f)$. This appendix is devoted to the question of recovering the function $f$ from the set $\TT(f)$. Here, we are seeing $\TT(f)$ as a subset of the affine Grassmannian of the plane, without any further structure, and in particular we are not keeping track of the point at which every tangent line is tangent to  $\Gamma(f)$, the knowledge of which would make the question trivial.

The earlier reference to this question which we could find is \cite{hor89}, where Horwitz proves that $\TT(f)$ does indeed determine $f$ when $f$ is a twice differentiable function on a compact intervall such that $f''$ has finitely many zeroes. In \cite[Theorem 4.8]{ric97}, Richardson proves the same conclusion for $C^{1,1}$ functions on compact intervals, using some results from geometric measure theory. In this appendix, we provide an elementary argument for $C^2$ functions on arbitrary intervals.

\begin{theorem}\label{t:T(f)}
Let $I\subset\R$ be an interval with non-empty interior. The graphs of two $C^2$ functions $f_1:I\to\R$ and $f_2:I\to\R$ have the same tangent lines, i.e.\ $\TT(f_1)=\TT(f_2)$, if, and only if, $f_1= f_2$.
\end{theorem}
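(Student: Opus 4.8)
The plan is to reconstruct $f$ from $\TT(f)$ by passing to the \emph{dual curve}. Parametrize each non-vertical line $\{y=px+q\}$ by the pair $(p,q)\in\R^2$, so that a line belongs to $\TT(f)$ exactly when it is tangent to $\Gamma(f)$ at some $x\in I$, i.e.\ when $(p,q)=\Phi(x):=(f'(x),\,f(x)-xf'(x))$. Thus $\TT(f)=\Phi(I)$ as a subset of $\R^2$. Differentiating gives $\Phi'(x)=f''(x)\,(1,-x)$, so wherever $f''(x)\neq 0$ the dual curve is an immersion whose tangent direction at $\Phi(x)$ is $(1,-x)$; in particular its slope equals $-x$. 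Hence from a point $(p,q)\in\TT(f)$ together with the slope $s$ of $\TT(f)$ at that point one recovers the point of tangency and the value of $f$ there by
\[
x=-s,\qquad f'(x)=p,\qquad f(x)=q-sp .
\]
This is the mechanism that should let the \emph{set} $\TT(f)$ — with no marking of tangency points — determine $f$.

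First I would establish the statement on the open set $U_i:=\{x\in\interior I : f_i''(x)\neq 0\}$. On a maximal subinterval of $U_i$ the sign of $f_i''$ is constant, so $f_i$ is strictly convex or strictly concave there, $f_i'$ is a strictly monotone bijection onto an interval of slopes, and for each such slope there is a \emph{unique} tangent line, whose intercept is (up to sign) the Legendre transform of $f_i$. Since $\TT(f_1)=\TT(f_2)$ matches these lines slope by slope, injectivity of the Legendre transform forces $f_1=f_2$ on the overlap. Equivalently, and more robustly, at a point $\Phi(x_0)$ with $f_i''(x_0)\neq0$ the dual curve is a locally embedded $C^1$ arc with a well-defined tangent slope $-x_0$; matching the two sets $\TT(f_1)=\TT(f_2)$ near such a point, the reconstruction formula above yields $f_1(x_0)=f_2(x_0)$ and $f_1'(x_0)=f_2'(x_0)$.

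The hard part will be patching across the degeneracy set $Z_i:=\{f_i''=0\}$, which for a general $C^2$ function can be far worse than the finitely many inflection points treated in \cite{hor89}. Two phenomena must be controlled. At an isolated inflection ($f''=0$ with a sign change) the dual curve has a cusp, while at an affine piece ($f''\equiv0$ on a subinterval $[a,b]$) the whole arc collapses to the single point $(m,c)$, where $y=mx+c$ is the supporting line; this point carries two one-sided limiting tangent slopes, $-a$ and $-b$, from which the endpoints of the affine piece — and hence the segment — can be read off, forcing $f_2$ to share the same affine piece. When $f_i$ has no affine piece, $U_i$ is dense, so the previous step determines $f_1=f_2$ on a dense set and continuity of $f_i$ finishes the argument; the affine pieces are then inserted using the one-sided tangent data. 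The genuine obstacle, and the point requiring the most care, is that $\TT(f)$ may fail to be a manifold at double-tangent (bitangent) lines, where $\Phi(x)=\Phi(x')$ for $x\neq x'$ and two branches of the dual curve cross, so that the tangent direction is multivalued. I would argue that each branch still yields a correct tangency datum, so that the reconstructed point set equals $\Gamma(f)$ exactly, and that the parameters producing such crossings are too sparse to spoil density of the good points; one then recovers $f_i$ on a dense set and concludes by continuity, treating the endpoints of $I$ as limits. Assembling these cases — strictly monotone $f_i'$, inflections, affine pieces, and bitangents — into a single argument valid for all $C^2$ functions on an arbitrary interval is the technical heart of the proof.
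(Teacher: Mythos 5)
Your dual-curve plan is the same Legendre-duality route the paper actually takes, and you correctly isolate where the difficulty sits; but the step you defer is a genuine gap, not a technicality. Your reconstruction "from a point of $\TT(f)$ together with the slope of $\TT(f)$ at that point" presupposes that near a point $(f_1'(x_0), f_1(x_0)-x_0f_1'(x_0))$ with $f_1''(x_0)\neq 0$ the \emph{set} $\TT(f_2)$ is a single embedded $C^1$ arc with a well-defined tangent. For a general $C^2$ function $f_2$, the maximal intervals on which $f_2'$ is strictly monotone form a countable family, and their dual arcs -- together with the image under $f_2'$ of the degeneracy set $K_2=\{f_2''=0\}$ -- can accumulate on any fixed arc of the dual of $f_1$; bitangent crossings are not the only enemy, and "too sparse to spoil density of the good points" is an assertion where the proof has to live: a priori the bad slopes could be dense, and what one can actually establish is only that they are \emph{meager}. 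The correct mechanism, which your sketch has no substitute for, is the one in the paper's Lemma \ref{l:app}: Sard's theorem shows $f_2'(K_2)$ is a countable union of closed null sets, hence meager; the coincidence sets of a monotone-derivative $C^1$ graph inside $\TT(f_2)$ with the countably many Legendre duals of the monotone branches of $f_2$ are relatively closed; Baire then forces one coincidence set to have interior, giving agreement of the duals on \emph{some} subinterval -- but not at your given $x_0$. Upgrading this local agreement to the whole arc requires the further structural fact that two distinct maximal intervals of increase of $f'$ are always separated by an interval on which $f'$ strictly decreases or $f''\equiv 0$, combined with the intermediate value theorem applied to the derivative of the dual (which records the tangency abscissa) to rule out the arc switching branch; the paper runs the Baire argument three times for exactly this purpose. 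Without this (or an equivalent), your "dense set of good points, conclude by continuity" step does not close.

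A secondary soft spot: "since $\TT(f_1)=\TT(f_2)$ matches these lines slope by slope, injectivity of the Legendre transform forces $f_1=f_2$ on the overlap" is ill-founded for the same reason -- $\TT(f)$ generally contains several lines of any given slope, one per monotone branch, so equality of the sets does not by itself pair a convex piece of $f_1$ with a single convex piece of $f_2$; producing that pairing is again the content of the lemma. Conversely, your treatment of affine pieces is more elaborate than necessary: once $f_1=f_2$ is known on the closure of $U=\{f_1''\neq 0\}\cup\{f_2''\neq 0\}$, both functions are affine on each maximal interval of $I\setminus\overline{U}$ and agree, with matching derivative, at an endpoint lying in $\overline{U}\cap I$ (using that $\TT$ is not a singleton), so no analysis of one-sided limiting tangent directions at the collapsed dual point is needed.
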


Our proofs uses some properties of the Legendre transform in one variable, which we now recall. Given a $C^1$ function $f: I \rightarrow \R$ such that $f'$ is strictly monotone (i.e., $f$ is either strictly convex or strictly concave), the Legendre dual of $f$ is the function
\[
f^* : J:= f'(I) \rightarrow \R, \qquad f^*(y) = xy - f(x), \mbox{ where } f'(x)=y.
\]
The properties listed in the following proposition are well known, but we include their short proof for the sake of completeness.

\begin{proposition}
If $f$ is $C^1$ with strictly monotone derivative, then $f^*$ is $C^1$, its derivative is the strictly monotone function $(f^*)' = (f')^{-1}$, and $f^{**} = f$.
\end{proposition}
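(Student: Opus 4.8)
The plan is to work throughout with the inverse function $g := (f')^{-1}$. Since $f$ is $C^1$, its derivative $f'$ is continuous, and being strictly monotone it is a homeomorphism from $I$ onto the interval $J = f'(I)$; hence $g : J \to I$ is a well-defined, continuous, strictly monotone function, and $f^*(y) = y\,g(y) - f(g(y))$ is a continuous function on $J$. The whole proposition then reduces to showing that $f^*$ is differentiable with $(f^*)' = g$: continuity of $g$ upgrades this to $f^* \in C^1$, strict monotonicity of $g$ gives the strict monotonicity of $(f^*)'$, and $(f^*)'(J) = g(J) = I$ shows that $f^{**}$ is again defined on $I$, matching the domain of $f$.

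First I would prove the differentiability directly from the difference quotient, \emph{without} assuming that $g$ is differentiable — which is the whole point, since $f'$ need not be. Fix $y_0 \in J$, write $x_0 = g(y_0)$ and $x = g(y)$ for $y \in J$. A short algebraic simplification, using $y = f'(x)$, gives
\[
f^*(y) - f^*(y_0) - x_0\,(y - y_0) = y\,(x - x_0) - \bigl( f(x) - f(x_0) \bigr),
\]
and the mean value theorem yields $\xi$ between $x$ and $x_0$ with $f(x) - f(x_0) = f'(\xi)(x - x_0)$, so the right-hand side equals $\bigl( f'(x) - f'(\xi) \bigr)(x - x_0)$.

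The main point — and the only place where monotonicity is essential — is the estimate of this remainder after dividing by $y - y_0 = f'(x) - f'(x_0)$. Since $\xi$ lies between $x$ and $x_0$ and $f'$ is monotone, $f'(\xi)$ lies between $f'(x)$ and $f'(x_0)$, whence $|f'(x) - f'(\xi)| \le |f'(x) - f'(x_0)| = |y - y_0|$. Therefore
\[
\left| \frac{f^*(y) - f^*(y_0) - x_0\,(y - y_0)}{y - y_0} \right| \le |x - x_0| = |g(y) - g(y_0)|,
\]
which tends to $0$ as $y \to y_0$ by continuity of $g$. This is the crux: the bound absorbs the possibly unbounded difference quotient of $g$. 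It proves $(f^*)'(y_0) = x_0 = g(y_0)$, so $(f^*)' = g$ is continuous and strictly monotone and $f^*$ is $C^1$ of the same type.

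Finally, the biduality $f^{**} = f$ is a direct computation. Since $(f^*)' = g$ has inverse $g^{-1} = f'$, for $z \in I$ the value $(f^*)^*(z) = z\,w - f^*(w)$ is taken at the unique $w$ with $g(w) = z$, i.e.\ $w = f'(z)$; substituting $f^*(f'(z)) = f'(z)\,g(f'(z)) - f(g(f'(z))) = z\,f'(z) - f(z)$ gives $(f^*)^*(z) = f(z)$. I expect no obstacle here; the only subtlety worth flagging is the one already handled above, namely that the identity $(f^*)' = (f')^{-1}$ must be established without presupposing the differentiability of $(f')^{-1}$.
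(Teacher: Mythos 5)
Your proof is correct, and it reaches the key estimate by a genuinely different mechanism than the paper. The paper exploits the variational characterization of the Legendre transform: assuming $f'$ increasing, the function $x\mapsto h(x,y):=yx-f(x)$ is strictly concave with maximum at $g(y):=(f')^{-1}(y)$, so $f^*(y)\ge h(x,y)$ for all $x$, and testing this inequality at $x_1=g(y_1)$ and $x_2=g(y_2)$ sandwiches the difference quotient: $\frac{f^*(y_2)-f^*(y_1)}{y_2-y_1}\in[x_1,x_2]=(f')^{-1}([y_1,y_2])$. You instead derive the exact identity $f^*(y)-f^*(y_0)-x_0(y-y_0)=\bigl(f'(x)-f'(\xi)\bigr)(x-x_0)$ via the mean value theorem and bound the factor $f'(x)-f'(\xi)$ by $y-y_0$ using only the monotonicity of $f'$. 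The two routes produce equivalent quantitative conclusions --- your bound $\bigl|\frac{f^*(y)-f^*(y_0)}{y-y_0}-g(y_0)\bigr|\le|g(y)-g(y_0)|$ is exactly what the paper's sandwich gives --- and both correctly avoid the trap of differentiating $(f')^{-1}$, which you rightly flag as the only real subtlety. What each buys: the paper's envelope argument is the one that generalizes to nonsmooth convex duality (where $f^*$ is a supremum rather than a critical value), and its one-sided inequalities require splitting into the increasing and decreasing cases (``the monotone decreasing case being analogous''); your MVT computation is purely algebraic, needs no concavity observation, and handles both monotonicity cases in a single uniform argument. The biduality step is the same short computation in both proofs.
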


\begin{proof}
We assume that $f'$ is monotone increasing, the monotone decreasing case being analogous.
Let $h:I\times J\to\R$ be the function
$h(x,y):=yx-f(x)$.
Since $\partial_xh(x,y)=y-f'(x)$, the strictly concave function $x\mapsto h(x,y)$ has a strict maximum at $(f')^{-1}(y)$. This, together with $f^*(f'(x))=h(x,f'(x))$, implies
\begin{align*}
 f^*(y)\geq h(x,y),\qquad\forall x\in I,\ y\in J.
\end{align*}
For each $y_1,y_2\in J$ with $y_1<y_2$ and $x_i:=(f')^{-1}(y_i)$, we have
\begin{align*}
 f^*(y_2)
 -
 f^*(y_1)
 & \leq
 h(x_2,y_2)-  h(x_2,y_1)
 =
 (y_2-y_1)x_2,
\end{align*}
and analogously
\begin{align*}
 f^*(y_2)
 -
 f^*(y_1)
 & \geq
 h(x_1,y_2)-  h(x_1,y_1)
 =
 (y_2-y_1)x_1.
\end{align*}
Therefore
\[
\frac{f^*(y_2)-f^*(y_1)}{y_2-y_1}
\in
 [x_1,x_2]=(f')^{-1}([y_1,y_2]).
\]
This shows that $f^*$ is differentiable, with derivative $(f^*)'(y)=(f')^{-1}(y)$. Since $f'$ is a continuous monotone function, its inverse is continuous, and therefore $f^*$ is $C^1$. Finally, since $f(x)=xy-f^*(y)$ whenever $y=f'(x)$, and thus whenever $x=(f^*)'(y)$, we conclude that $f^{**}=f$.
\end{proof}

We identify $\R^2$ with the space of affine lines in $\R^2$ that are not vertical, where $(m,q)\in\R^2$ corresponds to the affine line given by the equation
\[
y = mx - q.
\]
By this identification, the set  $\TT(f)$ of tangent lines to the graph of the differentiable function $f: I \rightarrow \R$ is given by
\begin{align*}
 \TT(f):=\big\{ (f'(x),x f'(x)-f(x))\ \big|\ x\in I\big\}.
\end{align*}
Comparing the above expression with the definition of the Legendre transform, we see that if $f: I \rightarrow \R$ is $C^1$ and its derivative is strictly monotone, then 
\begin{equation}
\label{e:legendre}
\mathcal{T}(f) = \Gamma(f^*).
\end{equation}

Now let $f:I\to\R$ be a $C^2$ function on an arbitrary interval with non-empty interior. Let
\[
\{I_{\alpha}\}_{\alpha\in A}
\]
be the set of maximal intervals which are open in $I$ and on which $f'$ is strictly monotone. These intervals are pairwise disjoint and the index set $A$ is at most countable. Any $x\in I$ with $f''(x)\neq 0$ belongs to some interval $I_{\alpha}$.

Denote by $K$ the complement in $I$ of the union of all these intervals. Being closed in $I$, the set $K$ is a countable union of compact sets, and so is $f'(K)$. Since $f''=0$ on $K$, by Sard's theorem the set $f'(K)$ has zero measure. We conclude that $f'(K)$ is a countable union of closed sets with empty interior.

We write $A=A_+ \sqcup A_-$, where $A_+$ (resp.\ $A_-$) is the set of indices $\alpha\in A$ such that $f'$ is strictly increasing (resp.\ strictly decreasing) on $I_{\alpha}$. Let $I_{\alpha}$ and $I_{\beta}$ be distinct intervals with $\alpha,\beta\in A_+$ and $x<x'$ for every $x\in I_{\alpha}$ and $x'\in I_{\beta}$. Let $[a,b]\subset I$ be the closed interval separating $I_{\alpha}$ from $I_{\beta}$, i.e.\ $a=\sup I_{\alpha}$, $b=\inf I_{\beta}$. 

We claim that $a<b$ and $[a,b]$ contains an open interval $U$ which is either in $K$ or is of the form $U=I_{\gamma}$ for some $\gamma\in A_-$. Indeed, if by contradiction this is not the case then $f''|_{[a,b]}$ must be non-negative  (as any point at which $f''$ is negative belongs to some interval $I_{\gamma}$ with $\gamma\in A_-$) and its set of zeroes must have empty interior (as we are assuming that $K\cap [a,b]$ has empty interior). This implies that $f'$ is strictly increasing on $[a,b]$, and hence on the interval $I_{\alpha} \cup [a,b] \cup I_{\beta}$, contradicting the fact that $I_{\alpha}$ and $I_{\beta}$ are maximal intervals on which $f'$ is strictly increasing.

Similarly, if $\alpha$ and $\beta$ are both in $A_-$, then $a<b$ and $[a,b]$ contains an open interval $U$ which is either in $K$ or is of the form $U=I_{\gamma}$ for some $\gamma\in A_+$. 

Let $f_{\alpha}$ be the restriction of $f$ to $I_{\alpha}$, set $J_{\alpha}:= f_{\alpha}'(I_{\alpha})$, and denote by 
\[
g_{\alpha} := f_{\alpha}^* : J_{\alpha} \rightarrow \R
\]
the Legendre dual of $f_{\alpha}$. By \eqref{e:legendre}, we have
\begin{equation}
\label{e:legendre2}
\mathcal{T}(f_{\alpha}) = \Gamma(g_{\alpha}).
\end{equation}
The following lemma is the main ingredient in the proof of Theorem~\ref{t:T(f)}.

\begin{lemma}\label{l:app}
Let $J\subset\R$ be an open interval and $g:J\to\R$ a $C^1$ function with strictly monotone derivative, whose graph satisfies $\Gamma(g)\subset \TT(f)$. Then $J\subset J_\alpha$ and $g=g_\alpha|_J$ for some $\alpha\in A$.
\end{lemma}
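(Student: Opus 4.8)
The plan is to combine the decomposition of $\TT(f)$ coming from the maximal monotonicity intervals with a Baire-category argument and, crucially, with the strict monotonicity of $g'$. First I would record the decomposition
\[
\TT(f) = \bigcup_{\alpha\in A} \Gamma(g_\alpha) \cup E, \qquad E := \{(f'(x),\, xf'(x)-f(x)) : x\in K\},
\]
which follows from $\TT(f_\alpha)=\Gamma(g_\alpha)$ (see \eqref{e:legendre2}) together with $I = \bigsqcup_\alpha I_\alpha \sqcup K$. The slopes occurring in $E$ all lie in $f'(K)$, which was shown above to be a countable union of closed sets with empty interior, hence meager. Since the first coordinate of a point of $\Gamma(g)$ is its abscissa $y$, the inclusion $\Gamma(g)\subset\TT(f)$ gives, for every $y\in J\setminus f'(K)$, some index $\alpha$ with $y\in J_\alpha$ and $g(y)=g_\alpha(y)$. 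Thus $g$ coincides pointwise with one of the functions $g_\alpha$ off the meager set $f'(K)$.

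Next I would upgrade this pointwise coincidence to a local one. For each $\alpha$ set $D_\alpha := \{y\in J\cap J_\alpha : g(y)=g_\alpha(y)\}$, a relatively closed subset of the open set $J\cap J_\alpha$; since $D_\alpha\subset J_\alpha$, its closure in $J$ lies in $\overline{J_\alpha}$, so $\mathrm{int}_J(\overline{D_\alpha})\subset J_\alpha$, and on this open set $g\equiv g_\alpha$. Because $J\setminus f'(K) \subset \bigcup_\alpha \overline{D_\alpha}$ and $f'(K)$ is meager, the Baire category theorem applied on every open subinterval of $J$ shows that some $\overline{D_\alpha}$ has nonempty interior there. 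Hence the open set $O := \{y\in J : g\equiv g_\alpha \text{ near } y \text{ for some } \alpha\}$ is dense in $J$. On $O$ we may differentiate: by the proposition on the Legendre transform, $g'(y)=g_\alpha'(y)=(f'|_{I_\alpha})^{-1}(y)$, so writing $x(y):=g'(y)$ we obtain $x(y)\in I$, $f'(x(y))=y$, and $g(y)=x(y)\,y-f(x(y))$ for all $y\in O$.

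Finally I would invoke the hypothesis that $g'$ is strictly monotone. Since $g'$ and $f'$ are continuous and $O$ is dense, the identities $f'(g'(y))=y$ and $g(y)=g'(y)\,y-f(g'(y))$ persist on all of $\mathrm{int}(J)$ (working on the interior avoids the boundary points of $I$). Now $x=g'$ is a strictly monotone continuous map, so $x(J)$ is a nondegenerate interval and, on it, $f'$ is the inverse of the strictly monotone map $x$, hence $f'$ is strictly monotone on $x(J)$. Every point of $\mathrm{int}(x(J))$ therefore lies in a maximal monotonicity interval, and connectedness forces $\mathrm{int}(x(J))\subset I_{\alpha_0}$ for a single $\alpha_0$. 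Consequently $J\subset J_{\alpha_0}$, and since $f'$ is injective on $I_{\alpha_0}$, the point $x(y)$ is exactly the one defining the Legendre dual, giving $g(y)=x(y)\,y-f(x(y))=g_{\alpha_0}(y)$, i.e.\ $g=g_{\alpha_0}|_J$ after the routine extension by continuity to the endpoints.

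I expect the main obstacle to be the passage from the pointwise, almost-everywhere coincidence $g=g_\alpha$ to a genuinely local one: the sets $D_\alpha$ are only closed in $J\cap J_\alpha$, and the index $\alpha$ could a priori change from point to point, so the Baire-category step together with the careful bookkeeping of the interiors of the $\overline{D_\alpha}$ (and of the boundary points of $J$ and $I$) is where the real care is needed. Once the derivative identity $g'=(f'|_{I_\alpha})^{-1}$ is available on a dense open set, the strict monotonicity of $g'$ makes the final identification of a single $\alpha_0$ essentially automatic.
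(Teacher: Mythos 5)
Your proposal is correct in substance, but after the shared first step it reaches the conclusion by a genuinely different route than the paper. Both proofs begin identically: the decomposition of $\TT(f)$ over the maximal intervals $I_\alpha$ plus the exceptional set coming from $K$, the meagerness of $f'(K)$ via Sard, and a Baire argument showing that some coincidence set $D_\alpha$ (the paper's $V_\alpha$) has non-empty interior; your observation that running Baire on every subinterval yields a dense open set $O$ on which $g$ locally agrees with some $g_\alpha$ is a mild strengthening of the paper's claim. From there the paper argues by contradiction: a second application of the claim produces a second index $\beta\in A_+$, the structural fact (established just before the lemma) that the interval separating $I_\alpha$ from $I_\beta$ contains an open interval $U\subset K\cup\bigcup_{\gamma\in A_-}I_\gamma$, the intermediate value theorem applied to $g'$ gives a window $W$ with $g'(W)\subset U$, and a third application of the claim yields the contradiction. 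You instead extract from $O$ the pointwise identities $f'(g'(y))=y$ and $g(y)=g'(y)\,y-f(g'(y))$, extend them to all of $J$ by continuity, and observe that $f'$ restricted to $P:=g'(J)$ is the inverse of the strictly monotone $g'$, hence strictly monotone on $P$; since $P$ is an open interval (the continuous strictly monotone image of the open interval $J$, which also disposes of your passage from $\mathrm{int}(x(J))$ to $x(J)$), it lies in a single maximal interval $I_{\alpha_0}$, giving $J\subset J_{\alpha_0}$ and $g=g_{\alpha_0}|_J$ directly. Your endgame is more streamlined: it dispenses with the paper's combinatorial separating-interval lemma and the triple use of Baire, at the price of one continuity-extension step.

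That step is the one place where your argument, as written, has a hole. ``Persisting by continuity'' presupposes that $g'(y)\in I$ for every $y\in J$, so that $f(g'(y))$ and $f'(g'(y))$ are defined; a priori, for $y_0\in J$ the limit $g'(y_0)$ of points $g'(y_n)\in I$ with $y_n\in O$ could be an endpoint of $\overline{I}\setminus I$, and your parenthetical about working on the interior of $J$ does not address this (indeed $J$ is open by hypothesis, so $\mathrm{int}(J)=J$). The fix is short and uses exactly the hypotheses at hand: if, say, $g'$ is strictly increasing and $g'(y_0)=\sup I\notin I$, pick $y_1\in O$ with $y_1>y_0$, which exists because $J$ is open and $O$ is dense in $J$; then $g'(y_1)>g'(y_0)=\sup I$ contradicts $g'(y_1)\in I$, and the remaining cases are symmetric. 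With this two-line patch inserted your proof is complete; the final ``extension by continuity to the endpoints'' is vacuous, again because $J$ is open.
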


\begin{proof}
We provide the proof for $g'$ strictly increasing, the other case being analogous. For each $\alpha\in A$, we define
\begin{align*}
V_\alpha:=\big\{ y\in J\cap J_\alpha\ \big|\ g(y)=g_\alpha(y) \big\}.
\end{align*}
By \eqref{e:legendre2}, this subset consists of those $y\in J$ such that $(y,g(y))\in\TT(f_\alpha)$.

We claim that there exists $\alpha\in A$ such that the set $V_\alpha$ has non-empty interior, and since $g'$ is strictly increasing, we must have $\alpha\in A_+$. Indeed, the inclusion $\Gamma(g) \subset \TT(f)$ implies that for every $y\in J$ there exists $x\in I$ such that
\[
(y,g(y)) = (f'(x),x f'(x)- f(x)).
\]
If $x$ is in $I_{\alpha}$ for some $\alpha\in A$, then $y=f_{\alpha}'(x)$ and $g(y)= x f_{\alpha}'(x) - f_{\alpha}(x) = g_{\alpha}(y)$, so $y$ is in $V_{\alpha}$. If $x$ is not in any of the $I_{\alpha}$, then it is in $K$ and hence $y=f'(x)$ is in $f'(K)$. Therefore,
\[
J \subset f'(K) \cup \bigcup_{\alpha \in A} V_{\alpha}.
\]
As seen above, $f'(K)$ is a countable union of closed sets with empty interior. By Baire's theorem, the above inclusion implies that there exists $\alpha\in A$ for which $\overline{V_\alpha}$ has non-empty interior. Since $V_{\alpha}$ is closed in the interval $J\cap J_{\alpha}$, $V_{\alpha}$ has non-empty interior as well. This proves our claim.

It remains to prove that $V_\alpha=J$. Assume by contradiction that the inclusion $V_\alpha\subset J$ is strict. Using the fact that $V_{\alpha}$ is closed in the open interval $J\cap J_{\alpha}$, we deduce that $J\setminus \overline{V}_{\alpha}$ is non-empty. By applying once again the claim of the previous paragraph to the restriction of $g$ to an open interval in $J\setminus V_\alpha$, we find another $V_\beta$ with non-empty interior for some $\beta\in A_+\setminus\{\alpha\}$. Since both the indices $\alpha$ and $\beta$ belong to $A_+$, by our previous discussion the closed interval separating $I_{\alpha}$ and $I_{\beta}$ contains an open interval $U$ which is either in $K$ or is of the form $U=I_\gamma$ for some $\gamma\in A_-$.

Since $g'(V_\alpha) = g_{\alpha}'(V_{\alpha}) \subset I_\alpha$ and $g'(V_\beta)= g_{\beta}'(V_{\beta}) \subset I_\beta$, by the intermediate value theorem we can find an open interval $W\subset J$ such that $g'(W)\subset U$. We now apply, for a third time, the claim of the previous paragraph to the restriction $g|_W$, and obtain an open interval $W'\subset W\cap J_{\delta}$ such that $g|_{W'}= g_\delta|_{W'}$ for some $\delta\in A_+$.
This implies that $g'(W') \subset I_\delta$, which contradicts the fact that 
\[
g'(W') \subset g'(W) \subset U \subset K \cup \bigcup_{\gamma\in A_-} I_{\gamma}.
\]
This contradiction concludes the proof of the lemma.
\end{proof}

\begin{proof}[Proof of Theorem~\ref{t:T(f)}]
Let $\TT:=\TT(f_1)=\TT(f_2)$. Notice that this latter set is a singleton $\TT=\{(m,q)\}$ if, and only if, $f_1(x)=f_2(x)=mx-q$ for all $x\in I$. Assume now that $\TT$ is not a singleton. For each $i\in\{1,2\}$, we denote by 
\[
\{ I_{i,\alpha} \}_{\alpha\in A_i}
\]
the family of maximal intervals which are open in $I$ and on which $f_i'$ is strictly monotone. Given $\alpha\in A_i$, we denote by 
\[
g_{i,\alpha}: J_{i,\alpha}:= f'_i(I_{i,\alpha}) \rightarrow \R
\]
the Legendre dual of the restriction $f_i$ to $I_{i,\alpha}$.

For each open interval $(a,b)\subset I$ such that $f_1''|_{(a,b)}$ is nowhere vanishing, we have $(a,b)\subset I_{1,\alpha_1}$ for some $\alpha_1\in A_1$. The dual function $g_{1,\alpha_1}$ is $C^1$ and has strictly monotone derivative on the open interval $(c,d):=f'_1((a,b))$, and by \eqref{e:legendre2} we have
\[
\Gamma(g_{1,\alpha_1}) = \TT(f_{1,\alpha_1}) \subset\TT=\TT(f_2).
\]
Therefore, Lemma~\ref{l:app} implies that $(c,d)\subset J_{2,\alpha_2}$ and $g_{1,\alpha_1}|_{(c,d)}=g_{2,\alpha_2}|_{(c,d)}$ for some $\alpha_2\in A_2$. Since the Legendre transform is an involution, we obtain
\[
f_1|_{(a,b)} = (g_{1,\alpha_1}|_{(c,d)})^* = (g_{2,\alpha_2}|_{(c,d)})^* = f_2|_{(a,b)}.
\]
By repeating the whole argument switching the roles of $f_1$ and $f_2$, we infer that $f_1=f_2$ on the open subset
\begin{align*}
 U := \big\{ x\in I\ |\ f_1''(x)\neq0 \big\} \cup \big\{ x\in I\ |\ f_2''(x)\neq0 \big\},
\end{align*}
and thus on its relative closure $\overline U\cap I$. On each maximal open interval $(a,b)\subset I\setminus\overline U$, we have $f_1''|_{(a,b)}=f_2''|_{(a,b)}= 0$, and therefore
\begin{align*}
 f_1(x)=f_1(y)+f'_1(y)(x-y)=f_2(y)+f'_2(y)(x-y)=f_2(x),\qquad\forall x\in(a,b),
\end{align*}
where $y\in\{a,b\}\cap \overline U\cap I$ (the latter set is not empty because we are assuming that $\TT$ is not a singleton). This shows that $f_1=f_2$ on the whole $I$.
\end{proof}

\begin{remark}
We do not know whether Theorem \ref{t:T(f)} is true also for functions which are just differentiable, or even $C^1$.
\end{remark}

\section{$S^1$-invariant contact forms on three-manifolds}
\label{s:appB}

Let $M$ be a smooth (resp.\ analytic) closed oriented 3-manifold equipped with a free $S^1$-action generated by the smooth (resp.\ analytic) vector field $V$. We denote this action by $(\theta,x) \mapsto \theta\cdot x$, with $\theta\in S^1$ and $x\in M$. The quotient $W:=M/S^1$ is a smooth (resp.\ analytic) closed oriented surface, and we denote by $\pi:M \to W$ the quotient projection. Equivalently, $M$ is the total space of a smooth (resp.\ analytic) principal $S^1$-bundle over $W$.

Let $\alpha$ be a smooth (resp.\ analytic) $S^1$-invariant contact form on $M$. We assume that $\alpha$ is compatible with the orientation of $M$, meaning that $\alpha \wedge d\alpha$ is a positive volume form. We denote by $R=R_{\alpha}$ the associated Reeb vector field, defined by $\alpha(R)=1$ and $\imath_{R} d\alpha=0$, and by $\phi_R^t$ its flow. The function $\alpha(V)$ is invariant under the flow of $V$, and therefore is of the form
\begin{align*}
 \alpha(V)=H\circ\pi,
\end{align*}
for some smooth (resp.\ analytic) function $H:W\to\R$. Being $S^1$-equivariant, the Reeb flow of $\alpha$ descends to a flow on $W$, which by the next proposition is the Hamiltonian flow of $H$ with respect to a suitable symplectic form. Here, our sign convention is that the Hamiltonian vector field $X_H$ induced by a Hamiltonian $H$ on the symplectic manifold $(W,\omega)$ is defined by
\[
\imath_{X_H} \omega = dH,
\]
and the flow of $X_H$ is denoted by $\phi_{X_H}^t$. 

\begin{proposition}\label{p:symplectic_quotient}
There exists a unique symplectic form $\omega$ on $W$ such that 
\[\pi^*\omega=\imath_V (\alpha\wedge d\alpha).\] 
The Hamiltonian vector field $X_H$ is the projection of the Reeb vector field~$R$, i.e.
\begin{align*}
 d\pi(x)R(x)=X_H(\pi(x)),\qquad\forall x\in M.
\end{align*}
\end{proposition}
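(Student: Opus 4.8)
The plan is to realize $\omega$ as the descent to the quotient of the two-form
\[
\beta := \imath_V(\alpha\wedge d\alpha)
\]
on $M$, and to treat the existence/symplecticity of $\omega$ and the identification $d\pi(R)=X_H\circ\pi$ as two separate matters. First I would show that $\beta$ is a basic form, i.e.\ the pullback of a form on $W$. Since $\pi:M\to W$ is a principal $S^1$-bundle, a two-form on $M$ is basic exactly when it is horizontal and invariant, that is $\imath_V\beta=0$ and $\Lie_V\beta=0$. Horizontality is immediate from $\imath_V\imath_V=0$. For invariance I would use that $\Lie_V$ commutes with $\imath_V$ and that, by $S^1$-invariance of $\alpha$ (so $\Lie_V\alpha=0$), one has $\Lie_V(\alpha\wedge d\alpha)=(\Lie_V\alpha)\wedge d\alpha+\alpha\wedge d(\Lie_V\alpha)=0$. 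Hence $\Lie_V\beta=\imath_V\Lie_V(\alpha\wedge d\alpha)=0$, and there is a unique $\omega$ on $W$ with $\pi^*\omega=\beta$, uniqueness following because $\pi$ is a surjective submersion.

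Next I would verify that $\omega$ is symplectic. Closedness is a short Cartan computation: $d\beta=\Lie_V(\alpha\wedge d\alpha)-\imath_V\,d(\alpha\wedge d\alpha)$, where the first term vanishes as above and the second because $d(\alpha\wedge d\alpha)=d\alpha\wedge d\alpha$ is a four-form on the three-manifold $M$. Thus $\pi^* d\omega=d\beta=0$ and so $d\omega=0$. The non-degeneracy is the crux of the argument and the step I expect to be the main obstacle. Writing $\mu:=\alpha\wedge d\alpha$, which is a volume form precisely by the contact condition, I would observe that $(\imath_X\beta)(Y)=\mu(V,X,Y)$ for all $X,Y$; since $\mu$ is non-degenerate as a top form, the radical of $\beta$ at each point is exactly the line through $V(x)$. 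Therefore $\beta$ has rank two everywhere, and its descent $\omega$ is a nowhere-vanishing two-form on the surface $W$, hence symplectic. The delicate point here is to argue non-degeneracy through the volume form $\mu$ rather than by normalizing with $\alpha(V)=H\circ\pi$, which vanishes on the critical set of $H$.

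For the second assertion, I would first note that $R$ is $S^1$-invariant, i.e.\ $[V,R]=0$, because it is characterized by the invariant conditions $\alpha(R)=1$ and $\imath_R d\alpha=0$; consequently $R$ is $\pi$-related to a well-defined vector field $\bar R:=d\pi(R)$ on $W$. To prove $\bar R=X_H$ it suffices, by injectivity of $\pi^*$, to check $\pi^*(\imath_{\bar R}\omega)=\pi^* dH$. The left-hand side equals $\imath_R\beta$ by naturality of the interior product for $\pi$-related fields together with $\pi^*\omega=\beta$. A direct contraction, using $\imath_R d\alpha=0$, $\alpha(R)=1$, and $\imath_R\imath_V d\alpha=d\alpha(V,R)=0$, gives
\[
\imath_R\beta=\imath_R\bigl(\alpha(V)\,d\alpha-\alpha\wedge\imath_V d\alpha\bigr)=-\imath_V d\alpha.
\]
On the other hand, Cartan's formula with $\Lie_V\alpha=0$ yields $\pi^* dH=d(\alpha(V))=d(\imath_V\alpha)=-\imath_V d\alpha$, since $\alpha(V)=H\circ\pi$. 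Comparing the two identities gives $\imath_{\bar R}\omega=dH$, that is $d\pi(x)R(x)=X_H(\pi(x))$ for all $x\in M$, as claimed.
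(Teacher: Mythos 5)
Your proof is correct and follows essentially the same route as the paper's: both descend $\imath_V(\alpha\wedge d\alpha)$ to $W$ via horizontality and $S^1$-invariance, obtain non-degeneracy from the fact that $\alpha\wedge d\alpha$ is a volume form and $V$ is nowhere vanishing, and identify $d\pi(R)$ with $X_H$ using the key identity $\imath_V d\alpha = -d(\alpha(V)) = -\pi^* dH$ coming from $\Lie_V\alpha=0$. The only cosmetic differences are that you verify closedness explicitly (automatic for a $2$-form on a surface, so the paper omits it) and that you first establish projectability of $R$ from $[V,R]=0$ before comparing with $X_H$, whereas the paper's pointwise computation of $\omega(d\pi(x)R(x),\cdot)$ sidesteps that preliminary step.
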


\begin{proof}
We consider the $S^1$-invariant 2-form $\Omega:=\imath_{V} (\alpha\wedge d\alpha)$. Since $V$ is nowhere vanishing, $\Omega$ is nowhere vanishing as well. Moreover $\imath_V \Omega=0$. This allows us to define a 2-form $\omega$ on $W$ by
\begin{align*}
\omega(v_1,v_2):=\Omega(\tilde v_1,\tilde v_2),\qquad\forall v_1,v_2\in T_y W, \; \forall y\in W, 
\end{align*}
where $\tilde{v}_1$ and $\tilde{v_2}$ are arbitrary vectors in $TM$ based at the same point $x\in \pi^{-1}(y)$ and projecting to $v_1$ and $v_2$ by $d\pi(x)$. The value $\omega(v_1,v_2)$ is well defined independently of the choices of the preimage $x$ and of the lifts $\tilde v_1,\tilde v_2$.
Notice that $\omega$ is the only 2-form on $W$ satisfying $\pi^*\omega=\Omega$. The latter identity implies that
 $\omega$ is nowhere vanishing (i.e., symplectic). 
Since $0=\Lie_V\alpha=\imath_V d\alpha+d(\alpha(V))$, we have for every $x\in M$, $v\in T_{\pi(x)} W$, and $\tilde{v}\in T_x M$ lift of $v$:
\begin{align*}
\omega(d\pi(x)R(x),v)
&=
\Omega(R(x),\tilde v)
=
\alpha\wedge d\alpha(V(x),R(x),\tilde v)
=
-d\alpha(V(x),\tilde v)\\
&=
d(\alpha(V))(x)\tilde v
=
dH(\pi(x))v=\omega(X_H(\pi(x)),v),
\end{align*}
which implies that $d\pi(x)R(x)=X_H(\pi(x))$. 
\end{proof}

Note that the orientation induced by $\omega$ agrees with the orientation of $W$ as base of an $S^1$-bundle with oriented total space.

\medskip

\paragraph{\textbf{The main statement.}} We now specialize our discussion to the case in which $W$ is diffeomorphic to $S^2$ and $H$ is a perfect Morse function, meaning that it has only two critical points, necessarily the maximizer $w^+$ and the minimizer $w^-$, and that these are non-degenerate. Then  the level sets $H^{-1}(c)$ for $c\in J := (\min H,\max H)$ are non-constant periodic orbits of $X_H$, and we denote by $T(c)$ the period of the orbit $H^{-1}(c)$. Given $x\in M \setminus \pi^{-1}(\{w^+,w^-\})$, the point $\phi_R^{T(H(\pi(x)))}(x)$ is on the $S^1$-orbit of $x$, and hence there exists a unique $\theta=\theta(x)\in S^1$ such that
\[
\phi_R^{T(H(\pi(x)))}(x) = \theta(x) \cdot x.
\]
The equivariance of the Reeb flow implies that the function $\theta$ is $S^1$-invariant and descends to an $S^1$-valued function on $W$ which is invariant under the flow of $X_H$. Therefore, there exists a function $\Theta:   J \rightarrow S^1$ such that $\theta(x) = \Theta(H(\pi(x))$. In other words, the functions $T:  J \rightarrow (0,+\infty)$ and $\Theta:   (\min H,\max H) \rightarrow S^1$ are uniquely determined by the identity
\[
\phi_R^{T(H(\pi(x)))}(x) = \Theta(H(\pi(x))) \cdot x \qquad \forall x\in M \setminus \pi^{-1}(\{w^+,w^-\}),
\]
together with the fact that the positive function $T$ is minimal with above property. Note that $T$ and $\Theta$ are smooth (resp.\ analytic).

The functions $T$ and $\Theta$ are clearly invariant under $S^1$-equivariant conjugacies of the contact form. More precisely, let $\pi_1 : M_1 \rightarrow W_1$ and $\pi_2 : M_2 \rightarrow W_2$ be oriented principal $S^1$-bundles over closed surfaces diffeomorphic to the two-sphere, be $\alpha_1$ and $\alpha_2$ be orientation-compatible $S^1$-invariant contact forms on $M_1$ and $M_2$ with Hamiltonians $H_1$ and $H_2$ which are supposed to be perfect Morse functions on $W_1$ and $W_2$. If there exists an $S^1$-equivariant diffeomorphism $\varphi: M_1 \rightarrow M_2$ such that $\varphi^* \alpha_2 = \alpha_1$, then, setting $J_j := (\min H_j,\max H_j)$ and denoting by $T_j: J_j \rightarrow (0,+\infty)$ and $\Theta_j : J_j \rightarrow S^1$ the functions defined above, for $j=1,2$, we have
\begin{equation}
\label{e:coincide}
J_1 = J_2, \qquad T_1 = T_2, \qquad \Theta_1 = \Theta_2.
\end{equation}
The aim of this appendix is to show that, conversely, condition \eqref{e:coincide}, together with the assumption that the principal bundles $\pi_1$ and $\pi_2$ have the same Euler number,  implies that the contact forms $\alpha_1$ and $\alpha_2$ are $S^1$-equivariantly conjugate.

\begin{theorem}
\label{mainappB}
Consider smooth $($resp.\ analytic$)$ oriented principal $S^1$-bundles $\pi_1 : M_1 \rightarrow W_1$ and $\pi_2 : M_2 \rightarrow W_2$ with the same Euler number over surfaces $W_1$ and $W_2$ which are diffeomorphic to $S^2$.
Let $\alpha_1$ and $\alpha_2$ be smooth $($resp.\ analytic$)$ orientation-compatible $S^1$-invariant contact forms inducing Hamiltonians $H_1$ and $H_2$ on $W_1$ and $W_2$ which are perfect Morse functions. Denote by $J_j$, $T_j$, and $\Theta_j$, $j=1,2$, the data defined above. Then there exists an $S^1$-equivariant smooth $($resp.\ analytic$)$ diffeomorphism $\varphi: M_1 \rightarrow M_2$ such that $\varphi^* \alpha_2 = \alpha_1$ if, and only if, \eqref{e:coincide} holds.
\end{theorem}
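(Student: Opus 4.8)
The ``only if'' implication is immediate, as observed before the statement: an $S^1$-equivariant diffeomorphism $\varphi$ with $\varphi^*\alpha_2=\alpha_1$ conjugates the two Reeb flows and intertwines the two $S^1$-actions, hence preserves the range of the Hamiltonian, the period function, and the angle function, which is exactly \eqref{e:coincide}. The plan is to prove the converse by reducing, in three steps, to a single application of an $S^1$-equivariant Moser homotopy argument.

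First I would pass to the quotients. By Proposition \ref{p:symplectic_quotient}, the data of each $\alpha_j$ descends to a Hamiltonian $H_j$ and a symplectic form $\omega_j$ on $W_j\cong S^2$, and the period function $T_j$ is just the period of the orbits of $X_{H_j}$. Since $J_1=J_2$ and $T_1=T_2$, the action functions $c\mapsto \frac{1}{2\pi}\int_{\{H_j<c\}}\omega_j$ coincide (their derivative is $T/2\pi$ and they vanish at $\min H$); in particular the total areas $\int_{W_j}\omega_j$ agree and action--angle coordinates match. Building a symplectomorphism $\Phi:(W_1,\omega_1)\to(W_2,\omega_2)$ with $\Phi^*H_2=H_1$ is then routine away from the two critical points, and it extends smoothly (resp. analytically) across them because near a non-degenerate elliptic fixed point the germ of $(H,\omega)$ is determined, in action--angle coordinates, by the period function alone. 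As $\Phi$ is orientation preserving, the pullback bundle $\Phi^*M_2$ has Euler number $e(M_2)=e(M_1)$, so it is $S^1$-equivariantly isomorphic to $M_1$; this produces an $S^1$-equivariant lift $\varphi_0:M_1\to M_2$ of $\Phi$. Replacing $\alpha_2$ by $\varphi_0^*\alpha_2$, I am reduced to the case $M_1=M_2=M$, with two $S^1$-invariant contact forms $\alpha:=\alpha_1$ and $\alpha':=\varphi_0^*\alpha_2$ inducing the same Hamiltonian $H$ and the same symplectic form $\omega$ on $W$; moreover, since $\varphi_0$ is equivariant and conjugates the Reeb flows, $\alpha'$ has angle function $\Theta_2$, which equals $\Theta_1$ by hypothesis.

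Next I would set up the homotopy. Because $\alpha'(V)=\alpha(V)=H\circ\pi$, the difference $\alpha'-\alpha$ annihilates $V$ and is $S^1$-invariant, hence basic: $\alpha'-\alpha=\pi^*\gamma$ for a $1$-form $\gamma$ on $W$. A direct computation of $\imath_V(\alpha_s\wedge d\alpha_s)$ for the linear path $\alpha_s:=\alpha+s\,\pi^*\gamma$ gives $\pi^*(\omega+s\,d(H\gamma))$; equality of the induced symplectic forms forces $d(H\gamma)=0$, so $\imath_V(\alpha_s\wedge d\alpha_s)=\pi^*\omega$ is nowhere zero for every $s$. Hence $\alpha_s\wedge d\alpha_s$ never vanishes and, being positive at $s=0$, stays positive, so $\{\alpha_s\}_{s\in[0,1]}$ is a path of $S^1$-invariant contact forms all inducing $(H,\omega)$. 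I would then look for an $S^1$-equivariant isotopy $g_s$ with $g_s^*\alpha_s=\alpha$, i.e. a time-dependent $S^1$-invariant field $X_s$ solving $\Lie_{X_s}\alpha_s=-\pi^*\gamma$. Writing $X_s=f_s R_s+Y_s$ with $Y_s\in\ker\alpha_s$, the component along $R_s$ reduces to the cohomological equation $R_s(f_s)=-(\gamma(X_H))\circ\pi$, after which $Y_s$ is uniquely determined by the non-degeneracy of $d\alpha_s$ on $\ker\alpha_s$; choosing $f_s=F\circ\pi$ turns this into the single, $s$-independent equation $X_H(F)=-\gamma(X_H)$ on $W$.

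The main obstacle is precisely the solvability of $X_H(F)=-\gamma(X_H)$, and this is where the hypothesis $\Theta_1=\Theta_2$ enters. Along each periodic orbit $H^{-1}(c)$ the averaging obstruction to this equation is $\int_{H^{-1}(c)}\gamma$, and a short computation identifies this integral with the discrepancy of the angle functions (equivalently, with the change, along the path, of the cohomology class of the closed form $\alpha_s|_{\pi^{-1}(H^{-1}(c))}$); thus the equality of the angle functions makes all these averages vanish, and $F$ can be solved for on $W$ minus the two critical points. The remaining delicate point is smoothness (resp. analyticity) of $F$ across the elliptic fixed points $w^\pm$: in action--angle coordinates $(I,\vartheta)$ with $X_H=\Omega(I)\partial_\vartheta$, expanding in Fourier modes in $\vartheta$ and using that the frequency $\Omega(0)=2\pi/T(\max H)$ is finite and non-zero (non-degeneracy of the fixed point) together with the vanishing of the $\vartheta$-average shows that each mode of $F$ is as regular as $\gamma$, so $F$ extends smoothly (resp. analytically). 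Integrating the resulting $S^1$-invariant field $X_s$ over the compact manifold $M$ yields $g_1$ with $g_1^*\alpha'=\alpha$, and $\varphi:=\varphi_0\circ g_1$ satisfies $\varphi^*\alpha_2=\alpha_1$, completing the proof.
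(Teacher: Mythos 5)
Your proposal is correct in outline and, at the structural level, follows the same architecture as the paper's proof: reduce to the quotient and build a base diffeomorphism matching $(H,\omega)$ from $J_1=J_2$ and $T_1=T_2$ (the paper's Proposition \ref{p:aac-glob}), lift it using equality of Euler numbers, and run a Moser homotopy along the linear path of contact forms, reducing everything to a cohomological equation solved thanks to $\Theta_1=\Theta_2$ (the paper's Proposition \ref{p:Moser}). Your bookkeeping via the basic one-form $\gamma$ with $\alpha'-\alpha=\pi^*\gamma$ is a nice repackaging and is equivalent to the paper's: since $\beta(R_\beta)=1$ forces $\gamma(X_H)=-fH$ where $R_\beta-R_\alpha=(\pi^*f)V$, your equation $X_H(F)=-\gamma(X_H)$ is literally the paper's $dh(X_H)=fH$, and your identification of the averaging obstruction with $\int_{H^{-1}(c)}\gamma$, hence with the discrepancy of the angle functions, is the same mechanism as the paper's use of $\theta_{\tau(y)}(y)=0$.

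The two places where your argument is genuinely thinner than the paper's are exactly the regularity issues at the elliptic fixed points, and in both cases the paper's treatment shows what is actually needed. First, for the cohomological equation: mode-by-mode regularity of the Fourier coefficients $\hat F_k=\hat G_k/(ik\,\Omega(I))$ does not by itself give smoothness (let alone analyticity) of the sum at $w^\pm$; you need uniform estimates on all derivatives, or better, an explicit averaging formula. The paper sidesteps all local analysis by exhibiting the closed-form global solution $h(y)=\frac{H(y)}{\tau(y)}\int_0^{\tau(y)}\theta_t\circ\phi_{X_H}^{-t}(y)\,dt$, which is manifestly smooth (resp.\ analytic) on all of $W$ because $\tau=T\circ H$ extends positively by Lemma \ref{l:extT}; the same averaging trick would repair your Fourier step. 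Second, the extension of the base map across $w^\pm$ is not ``routine,'' particularly in the analytic category. It is true that the germ of $(H,\omega)$ at a nondegenerate extremum is determined by the period function (via the isochore Morse lemma, which the paper also invokes), but to make the flow conjugacy constructed on the open annulus extend regularly you must normalize the choice of angle origin: the paper does this by constructing a transverse arc joining $w^+$ to $w^-$ whose germ in Darboux--Morse coordinates has the special form $\{(r,\theta^{\pm}(r^2))\}$, and in the analytic case this requires a real argument (an $S$-symmetric analytic metric, its gradient flow, and Poincar\'e's linearization theorem; see Lemma \ref{l:arc}). Without such a normalization of the section along which angles are measured, the action--angle symplectomorphism need not extend differentiably at the fixed points. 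Both gaps are fixable with the tools just described, so I would grade the proposal as essentially the right proof with two under-justified steps rather than a wrong route.
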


The remaining part of this appendix is devoted to the proof of the ``if'' part of Theorem \ref{mainappB}.

\medskip

\paragraph{\textbf{The cohomological equation.}} Assume that $\alpha$ and $\beta$  are smooth (resp.\ analytic) $S^1$-invariant contact forms on $M$ inducing the same Hamiltonian $H$ and the same symplectic form $\omega$ on the quotient manifold $W$, namely:
\begin{eqnarray}
\label{e:ass1}
\beta(V)=\alpha(V)=\pi^*H, \\
\label{e:ass2}
\imath_V (\beta\wedge d\beta)=\imath_V (\alpha\wedge d\alpha)=\pi^*\omega.
\end{eqnarray}
For now, we do not need further assumptions on the surface $W$ or the Hamiltonian $H$.
Since $R_{\alpha}$ and $R_{\beta}$ project to the same vector field $X_H$, they differ by a vector field which is parallel to $V$. Taking also the $S^1$-invariance into account, we deduce that
\begin{equation}
\label{e:diffbyf}
R_{\beta} - R_{\alpha} = (\pi^*f) V,
\end{equation}
for some smooth (resp.\ analytic) function $f: W \rightarrow \R$. In the next proposition, we show that if a suitable cohomological equation involving $f$ and $H$ has a solution, then $\alpha$ is the pull-back of $\beta$ by a suitable $S^1$-equivariant diffeomorphism. 

\begin{proposition}
\label{p:Moser}
Let $\alpha$ and $\beta$ be $S^1$-invariant smooth $($resp.\ analytic$)$ contact forms on $M$ satisfying \eqref{e:ass1} and \eqref{e:ass2}. If there exists a smooth $($resp.\ analytic$)$ function $h: W \rightarrow \R$ such that
\[
dh(X_H)=f\,H,
\]
where the function $f$ is defined by \eqref{e:diffbyf}, then there exists a smooth $($resp.\ analytic$)$ $S^1$-equivariant diffeomorphism $\psi :M\to M$ isotopic to the identity such that $\psi^*\beta=\alpha$.
\end{proposition}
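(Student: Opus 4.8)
The plan is to run a Moser homotopy argument along the straight-line path between the two contact forms, using the $S^1$-symmetry to push every equation down to the quotient surface $W$, where the hypothesis $dh(X_H)=fH$ supplies exactly the term needed to solve the relevant cohomological equation.

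First I would record two elementary consequences of the hypotheses. Set $\mu:=\beta-\alpha$. Since $\mu(V)=\beta(V)-\alpha(V)=0$ by \eqref{e:ass1} and $\mu$ is $S^1$-invariant, it descends to a one-form on $W$, i.e.\ $\mu=\pi^*\bar\mu$. Expanding $1=\beta(R_\beta)$ with the help of \eqref{e:diffbyf} and $\beta=\alpha+\pi^*\bar\mu$ then yields the key relation $\bar\mu(X_H)=-fH$ on $W$. Combined with the assumption on $h$, this shows that the one-form $\nu:=\bar\mu+dh$ satisfies $\nu(X_H)=0$.

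Next I would analyze the linear path $\alpha_t:=\alpha+t\,\pi^*\bar\mu$, $t\in[0,1]$, with $\alpha_0=\alpha$ and $\alpha_1=\beta$. One checks immediately that $\alpha_t(V)=\pi^*H$ for all $t$. The crucial point is that $\Omega_t:=\imath_V(\alpha_t\wedge d\alpha_t)$ is constant: its would-be quadratic-in-$t$ contribution is $\imath_V\pi^*(\bar\mu\wedge d\bar\mu)$, which vanishes because $W$ is two-dimensional, so $\Omega_t$ is affine in $t$; since $\Omega_0=\Omega_1=\pi^*\omega$ by \eqref{e:ass2}, it equals $\pi^*\omega$ throughout. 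As $\pi^*\omega$ is nowhere zero, $\alpha_t\wedge d\alpha_t$ never vanishes, so each $\alpha_t$ is an $S^1$-invariant contact form, and Proposition \ref{p:symplectic_quotient} guarantees that its Reeb field $R_{\alpha_t}$ projects to the same Hamiltonian vector field $X_H$ for every $t$.

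The heart of the argument is the Moser equation. Seeking an $S^1$-equivariant isotopy $\psi_t$ with $\psi_t^*\alpha_t=\alpha$, I would take its generator of the form $Y_t=(\pi^*h)\,R_{\alpha_t}+W_t$ with $W_t\in\ker\alpha_t$; then $\Lie_{Y_t}\alpha_t=-\dot\alpha_t$ reduces to $\imath_{W_t}d\alpha_t=-\pi^*\nu$. The compatibility condition, obtained by pairing the full equation with $R_{\alpha_t}$, is precisely $R_{\alpha_t}(\pi^*h)=\pi^*(fH)$, i.e.\ the identity $dh(X_H)=fH$ pulled back to $M$; this is exactly where the hypothesis is consumed, and getting this pairing to close cleanly is the step I expect to be the main obstacle. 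Because $\nu(X_H)=0$, the right-hand side $-\pi^*\nu$ annihilates $R_{\alpha_t}$, so nondegeneracy of $d\alpha_t$ on $\ker\alpha_t$ determines $W_t$ uniquely, and the $S^1$-invariance of all the data forces $W_t$, hence $Y_t$, to be $S^1$-invariant. Integrating the field $Y_t$ (complete, since $M$ is closed) produces an $S^1$-equivariant isotopy $\psi_t$ from the identity with $\psi_t^*\alpha_t\equiv\alpha$; setting $\psi:=\psi_1$ gives $\psi^*\beta=\alpha$, which is the desired conclusion.
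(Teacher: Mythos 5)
Your proposal is correct and follows essentially the same Moser-homotopy argument as the paper: interpolate linearly between $\alpha$ and $\beta$, split the generating vector field as $(\pi^*h)\,R_{\alpha_t}$ plus a component in $\ker\alpha_t$, and consume the hypothesis $dh(X_H)=f\,H$ precisely in the compatibility condition obtained by pairing with the Reeb field. Your minor streamlinings — deriving $\bar\mu(X_H)=-fH$ from $\beta(R_\beta)=1$ and obtaining contactness of the path from the vanishing of the quadratic term $\pi^*(\bar\mu\wedge d\bar\mu)$ for degree reasons, instead of the paper's explicit verification that $R_{\alpha_t}=tR_\beta+(1-t)R_\alpha$ — are sound but do not change the argument.
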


\begin{proof}
The proof is based on Moser's isotopy argument. For $t\in[0,1]$, we set $\alpha_t:=t\beta+(1-t)\alpha$. We claim each $\alpha_t$ is a contact form with Reeb vector field
\[
R_{\alpha_t}=tR_\beta+(1-t)R_\alpha=R_\alpha+t\pi^*fV
.\] 
Indeed, by $\Lie_V\alpha=\Lie_V\beta=0$ and \eqref{e:ass1}, we have $\imath_{V} d\alpha=\imath_V d\beta=-\pi^*dH$.
Therefore 
$\alpha_t(V)=\pi^*H$, $\imath_V d\alpha_t=-\pi^*dH$, and \eqref{e:ass2} implies
\begin{align*}
\imath_V (\alpha_t\wedge d\alpha_t)
=
\pi^*H\,d\alpha_t+\alpha_t\wedge \pi^*dH
=
t\, \imath_V (\beta\wedge d\beta)+(1-t)\, \imath_V (\alpha\wedge d\alpha)
=
\pi^*\omega.
\end{align*}
In particular, $\alpha_t\wedge d\alpha_t$ is nowhere vanishing, i.e., $\alpha_t$ is a contact form. Since
\begin{align*}
\alpha(R_\beta) & =1+\pi^*(fH),\\
\beta(R_\alpha) & =1-\pi^*(fH),\\
\imath_{R_{\alpha}} d\beta & =-\pi^*(f) \imath_V d\beta=-\pi^*(f) \imath_V d\alpha  = -\imath_{R_\beta} d\alpha,
\end{align*}
we infer
\begin{align*}
 \alpha_t(R_{\alpha_t}) & =t^2\beta(R_\beta)+t(1-t)(\beta(R_\alpha)+\alpha(R_\beta))+(1-t)^2\alpha(R_\alpha)=1,\\
R_{\alpha_t}\into d\alpha_t & 
=
t^2 \imath_{R_\beta} d\beta +t(1-t)(\imath_{R_\alpha} d\beta+\imath_{R_\beta} d\alpha)+(1-t)^2 \imath_{R_\alpha} \alpha=0,
\end{align*}
which confirms that $R_{\alpha_t}$ is the Reeb vector field of $\alpha_t$. We now look for an isotopy $\psi_t:M \to M$ such that $\psi_0=\id$ and $\psi_t^*\alpha_t=\alpha$. A time-dependent vector field $Y_t$ generates such an isotopy if and only if
\begin{align}
\label{e:Moser}
 \imath_{Y_t} d\alpha_t+d(\alpha_t(Y_t))+\beta-\alpha=0.
\end{align}
We can uniquely write any time-dependent vector field as $Y_t=h_tR_{\alpha_t}+Z_t$ for some time-dependent function $h_t:M \to\R$ and for some time-dependent vector field $Z_t\in\ker(\alpha_t)$. Since $\imath_{Y_t} d\alpha_t
=
\imath_{Z_t} d\alpha_t$ and $d(\alpha_t(Y_t))=dh_t$, we infer that~\eqref{e:Moser} is satisfied if and only if
$\imath_{Z_t} d\alpha_t=\alpha-\beta-dh_t$, namely if and only if
\begin{align}\label{e:Moser2}
\imath_{Z_t} d\alpha_t|_{\ker(\alpha_t)}
=
(\alpha-\beta-dh_t)|_{\ker(\alpha_t)},
\qquad
(\alpha-\beta)(R_{\alpha_t})=dh_t(R_{\alpha_t}).
\end{align}
Since $d\alpha_t|_{\ker(\alpha_t)}$ is non-degenerate, for any choice of $h_t$, the first of these two equations is satisfied by a unique $Z_t$. Since 
\begin{align*}
(\alpha-\beta)(R_{\alpha_t})
&=
\alpha(tR_\beta+(1-t)R_\alpha) - \beta(tR_\beta+(1-t)R_\alpha)\\
&=
t\alpha(R_\beta) + 1-t - t - (1-t)\beta(R_\alpha)\\
&=
t(1+\pi^*(fH))+1-2t - (1-t)(1-\pi^*(fH))\\
&= \pi^*(fH),
\end{align*}
we infer that the second equation in~\eqref{e:Moser2} is satisfied if and only if 
\[dh_t(R_{\alpha_t})=\pi^*(fH).\]
If we require $h_t$ to be time-independent and of the form $\pi^*h$, the previous equation becomes $dh(X_H)=fH$. \end{proof}

\paragraph{\textbf{Perfect Hamiltonians on the 2-sphere.}} We now assume that  $H: S^2 \rightarrow \R$ is a smooth (resp.\ analytic) perfect Morse function, with critical points $w^+$ and $w^-$ such that
\[
H(w^-) = \min H, \qquad H(w^+) = \max H,
\]
and $\omega$ is a smooth (resp.\ analytic) symplectic form on $S^2$. At $w^+$ and $w^-$ we have smooth (resp.\ analytic) coordinates $(x,y)$ with respect to which $\omega$ and $H$ take the form
\begin{equation}
\label{e:dm}
\omega = dx \wedge dy = r \, dr\wedge d\theta \qquad \mbox{and} \qquad H = h(r^2),
\end{equation}
where $(r,\theta)$ are polar coordinates on the $xy$-plane and $h:[0,\epsilon) \rightarrow \R$ is a smooth (resp.\ analytic) function such that $h'$ is nowhere vanishing on $[0,\epsilon)$. The existence of these coordinates can be seen as the two-dimensional case of the isochore Morse Lemma of Colin de Verdière and Vey (see \cite{cdvv79} for the smooth category and  \cite{vey77} for the analytic one), or as a special case of the symplectic Morse Lemma of Eliasson and Vey (see \cite{eli90} for the smooth category and \cite{vey78} for the analytic category). Here, we will refer to such coordinates as Darboux-Morse coordinates.

In the next lemma, we show that the function $T$ mapping $c$ to the period of the orbit $H^{-1}(c)$ has a smooth (resp.\ analytic) extension to the closure of the interval $J=(\min H, \max H)$.

\begin{lemma}
\label{l:extT}
The function $T$ has a  smooth $($resp.\ analytic$)$ positive extension to $\overline{J} = [ \min H,\max H]$.
\end{lemma}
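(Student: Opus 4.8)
The plan is to reduce everything to a local computation at the two critical points $w^+$ and $w^-$ via the Darboux--Morse normal form \eqref{e:dm}. Since $T$ is already known to be smooth (resp.\ analytic) on the open interval $J=(\min H,\max H)$, it will be enough to exhibit, near each endpoint, a smooth (resp.\ analytic) positive function that coincides with $T$ on a one-sided neighborhood of that endpoint; these local extensions then glue with $T|_J$ to give a smooth (resp.\ analytic) positive function on $\overline{J}$.

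First I would work near the minimum $w^-$. In Darboux--Morse coordinates $(r,\theta)$, setting $u:=r^2$, the normal form \eqref{e:dm} becomes $\omega=\tfrac12\,du\wedge d\theta$ and $H=h(u)$ with $h'$ nowhere vanishing. Solving $\imath_{X_H}\omega=dH$ gives $X_H=-2h'(u)\,\partial_\theta$, so each level set $H^{-1}(c)=\{u=h^{-1}(c)\}$ lying inside the chart is a circle traversed at the constant angular speed $2|h'(u)|$, and hence has period
\[
T(c)=\frac{\pi}{\bigl|h'(h^{-1}(c))\bigr|}.
\]
The crucial point is that this identity is exact, not merely asymptotic: for $c\in[\min H,\min H+\delta)$ the entire orbit $H^{-1}(c)$ is a small circle contained in the chart, so the right-hand side is the genuine period $T(c)$. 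Because $c\mapsto h^{-1}(c)$ and $u\mapsto h'(u)$ are smooth (resp.\ analytic) and $h'$ never vanishes, this expression is smooth (resp.\ analytic) and strictly positive on $[\min H,\min H+\delta)$, taking the value $\pi/|h'(0)|>0$ at the endpoint. This furnishes the one-sided smooth (resp.\ analytic) positive extension across $\min H$. Repeating the computation in a Darboux--Morse chart around the maximum $w^+$ yields the analogous extension across $\max H$.

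Gluing $T|_J$ with these two one-sided extensions---which agree with $T$ on the overlaps, since there the formula computes the genuine period---produces the required smooth (resp.\ analytic) positive extension of $T$ to $\overline{J}$. The only step that genuinely uses the hypotheses, and the part I would flag as delicate, is the appeal to the Darboux--Morse normal form \eqref{e:dm} in both the smooth and the analytic categories: it is precisely this normal form (the isochore/symplectic Morse lemma cited before \eqref{e:dm}) that makes the exact period formula available throughout a full neighborhood of each critical value, rather than only to leading order. Once the normal form is in hand, the remainder is the elementary explicit computation above.
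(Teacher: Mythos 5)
Your proposal is correct and follows essentially the same route as the paper: the Darboux--Morse normal form \eqref{e:dm} near $w^\pm$ yields $X_H=-2h'(r^2)\,\partial_\theta$ and hence the exact period formula $T(c)=\pi/|h'(h^{-1}(c))|=\pi|f'(c)|$ with $f=h^{-1}$, which is smooth (resp.\ analytic) and positive up to the endpoints. The paper likewise disposes of smoothness on the interior via action-angle coordinates on the annulus, exactly as you take for granted, so the two arguments coincide.
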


\begin{proof}
The smoothness (resp.\ analyticity) on $S^2 \setminus \{w^+,w^-\}$ is readily seen by using action-angle coordinates on this annulus.
In Darboux-Morse coordinates \eqref{e:dm} near $w^+$ or $w^-$, we have $H=h(r^2)$ and the Hamiltonian vector field takes the form
\[
X_H = - 2 h'(r^2) \partial_{\theta},
\]
where $h: [0,\epsilon) \rightarrow \R$ is smooth (resp.\ analytic) with $h'(0)\neq 0$.
Therefore, the orbit with $H=h(r^2)=c$, $0<r<\sqrt{\epsilon}$, has period
\[
T(c) = \frac{2\pi r}{ 2 r |h'(r^2)| } = \pi |f'(c)|,
\]
where $f$ is the inverse of $h$, which is smooth (resp.\ analytic) near $H(w^{\pm})$ and has non-vanishing derivative at this point because $h'(0)\neq 0$. Therefore, $T$ has a smooth (resp.\ analytic) extension to $\overline{J}$.
\end{proof} 

The next lemma (which is essentially trivial in the smooth setting) shows that the two critical points of the Hamiltonian can be joined by a smooth (resp.\ analytic) arc which is transverse to the level sets and whose restriction to Darboux-Morse charts near the critical points is the germ of a centrally symmetric curve.

\begin{lemma}
\label{l:arc}
There exists a smooth $($resp.\ analytic$)$ connected compact 1-di\-men\-sion\-al submanifold $A \subset S^2$ such that $\partial A =\{w^+,w^-\}$, $A \setminus \partial A$ is transverse to the flow of $X_H$ and meets each non-constant orbit exactly once, and in Darboux-Morse coordinates on neighborhoods $U^+$ and $U^-$ of $w^+$ and $w^-$ we have
\[
A \cap U^{\pm} = \{(r,\theta^{\pm}(r^2)) \mid 0\leq r < \sqrt{\epsilon}\},
\]
for suitable smooth $($resp.\ analytic$)$ functions $\theta^{\pm}: [0,\epsilon) \rightarrow \R$.
\end{lemma}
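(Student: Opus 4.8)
The plan is to produce $A$ as the closure of a single global section of the circle fibration $H:S^2\setminus\{w^+,w^-\}\to J$, $J=(\min H,\max H)$, whose fibres are the orbits $H^{-1}(c)$ of $X_H$. The transversality and ``meets each orbit once'' requirements are automatic for such a section: if $s:J\to S^2\setminus\{w^+,w^-\}$ satisfies $H\circ s=\mathrm{id}$, then $dH(s'(c))=1\neq 0=dH(X_H)$, so $s'(c)$ is never proportional to $X_H$, and $s$ crosses each orbit exactly once. Thus the whole content of the lemma is the construction of a smooth (resp.\ analytic) section together with the prescribed behaviour at the two poles, which says precisely that near $w^\pm$ the section is a \emph{centrally symmetric} curve in the Darboux--Morse chart, i.e.\ of the form $\theta=\theta^\pm(r^2)$.

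First I would pass to action--angle coordinates $(I,\varphi)$ on the annulus $S^2\setminus\{w^+,w^-\}$, which exist in the smooth (resp.\ analytic) category: all orbits of $X_H$ are periodic, the base $J$ is contractible, and an analytic global angle is obtained from analytic integrability together with the triviality of the underlying analytic circle bundle over $J$. In these coordinates $\omega=dI\wedge d\varphi$ and $H=\mathcal H(I)$, and the candidate section is simply $A_0:=\{\varphi=0\}$, which meets each circle $\{I=\mathrm{const}\}$ once and transversally. It then remains to read off the behaviour of $A_0$ near the poles. In a Darboux--Morse chart \eqref{e:dm} near $w^+$ the pair $(r^2/2,\theta)$ is itself an action--angle chart for $\omega$, and near $w^-$ so is $(\mathcal V/2\pi-r^2/2,-\theta)$, where $\mathcal V=\int_{S^2}\omega$. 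Since two angle coordinates for the same $\omega$ and the same action differ by a function of the action alone, near $w^\pm$ we have $\varphi=\pm\theta+s^\pm(I)$ for functions $s^\pm$ of $I$, whence $A_0=\{\theta=\mp s^\pm(I)\}$. As $I$ is an affine function of $r^2$ in each chart, this is exactly the centrally symmetric form $\theta=\theta^\pm(r^2)$ \emph{provided} $s^\pm$ is smooth (resp.\ analytic) up to the singular value of $I$.

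The crux is therefore the regularity of the angle defect $s^\pm$ up to $w^\pm$, i.e.\ the behaviour of the global angle $\varphi$ as one approaches a nondegenerate elliptic critical point of $H$. Here I would invoke the symplectic normal form at such a singularity---the Eliasson--Vey lemma in the smooth (resp.\ analytic) category already quoted for \eqref{e:dm}---which provides action--angle variables \emph{up to and including} the critical fibre, together with the boundary analyticity of the period function from Lemma~\ref{l:extT}. These inputs give that $s^\pm$ extends smoothly (resp.\ analytically) across $I=0$ (resp.\ $I=\mathcal V/2\pi$), so that $\theta^+(t):=-s^+(t/2)$ and its analogue at $w^-$ are smooth (resp.\ analytic) on $[0,\epsilon)$. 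I expect this boundary-regularity step to be the main obstacle: away from the poles everything is the routine Arnold--Liouville package, but controlling the angle across the elliptic singularity in the analytic class is exactly where the over-determination of a connected analytic arc---whose germ at one endpoint already determines it---would otherwise prevent prescribing nice behaviour at both poles simultaneously; it is resolved only because relaxing ``radial'' to ``centrally symmetric'' turns the requirement into the automatically satisfied condition that $\theta$ be a function of $I$.

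Finally I would set $A:=A_0\cup\{w^+,w^-\}$ and verify it is a connected compact $1$-submanifold with boundary $\{w^+,w^-\}$: near each pole the parametrisation $r\mapsto(r\cos\theta^\pm(r^2),\,r\sin\theta^\pm(r^2))$ is a smooth (resp.\ analytic) embedding of $[0,\sqrt{\epsilon})$ with non-vanishing derivative at $r=0$, so the poles are genuine boundary points and $A$ has all the stated properties. As a purely smooth alternative---useful for the smooth half of the statement---one can instead build a vector field $Z$ on $S^2$ vanishing exactly at $w^\pm$, equal to minus (resp.\ plus) the Euler field $x\partial_x+y\partial_y$ near $w^+$ (resp.\ $w^-$), and with $dH(Z)>0$ off the poles; such a $Z$ exists by a partition of unity because the pointwise condition $dH(Z)>0$ is convex, and any single integral curve of $Z$ running from $w^-$ to $w^+$ is a radial, hence centrally symmetric, transverse arc meeting each orbit once. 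The analytic case, however, genuinely requires the action--angle argument above.
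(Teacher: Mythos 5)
Your smooth alternative (the vector field $Z$ equal to $\mp(x\partial_x+y\partial_y)$ near $w^\pm$ with $dH(Z)>0$ off the poles, an integral curve of which is radial near both poles) is correct and is essentially the paper's own one-line treatment of the smooth case. The analytic case, however, has a genuine gap exactly at the step you yourself flag as the crux. Given \emph{some} analytic action--angle pair $(I,\varphi)$ on the open annulus, it is true that near $w^\pm$ one has $\varphi=\pm\theta+s^\pm(I)$; but nothing in the Eliasson--Vey normal form \eqref{e:dm}, nor in the boundary regularity of the period function (Lemma \ref{l:extT}), controls the behaviour of $s^\pm$ at the endpoints of $J$. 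The angle coordinate is only determined up to the gauge freedom $\varphi\mapsto\pm\varphi+u(I)$ with $u$ analytic merely on the \emph{open} interval, and for a generic choice the section $\{\varphi=0\}$ spirals infinitely as $I$ tends to a critical value (replace $\varphi$ by $\varphi+u(I)$ with $u$ singular at the endpoint to see this). So the extension of $s^\pm$ is not a consequence of the quoted inputs; it is a property of a \emph{well-chosen} angle. One can kill $s^+$ by declaring $\varphi:=\theta$ near $w^+$, but then -- by the very rigidity of analytic continuation you correctly point out -- the continuation of this particular $\varphi$ across the annulus is forced, and showing that the resulting $s^-$ extends analytically at the other end is precisely equivalent to the lemma being proved; your argument does not address it. (A genuine repair along your lines would amount to solving an additive Cousin-type problem for the singular parts of $s^\pm$ at the two endpoints after complexification, which is a substantially different and heavier argument.)

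The paper takes a route that bypasses the angle coordinate altogether: it introduces the half-period involution $S(z)=\phi_{X_H}^{T(H(z))/2}(z)$, which is analytic by Lemma \ref{l:extT} and coincides with the central symmetry $S_0(x,y)=(-x,-y)$ in the Darboux--Morse charts, chooses an $S$-invariant analytic metric whose Hessian matches $\pm d^2H$ at the two critical points, and takes $A$ to be the closure of one gradient flow line of $H$. Poincar\'e's analytic linearization at the (hyperbolic, for the gradient flow) equilibria, together with the \emph{uniqueness} of the normalizing conjugacy tangent to the identity, forces the linearization to be $S_0$-equivariant, and hence forces the arc to be centrally symmetric -- i.e.\ of the form $\theta=\theta^\pm(r^2)$ -- at \emph{both} poles simultaneously, with no need to prescribe anything by hand. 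This is how the paper resolves the over-determination problem that your action--angle approach leaves open.
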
 

\begin{proof}
In the smooth case, we can choose $\theta^{\pm}$ to be constant and join the corresponding arcs smoothly by a curve which is transverse to $X_H$.  Here, we prove the analytic case, which is more delicate. By Lemma \ref{l:extT}, the involution
\[
S: S^2 \rightarrow S^2, \qquad z\mapsto \phi_{X_H}^{T(H(z))/2}(z),
\]
is an analytic diffeomorphism with fixed point set $\{w^+,w^-\}$. We choose an analytic Riemannian metric $g$ on $S^2$ such that
\begin{equation}
\label{e:diff2}
\begin{split}
g(u,v) = d^2 H(w^-)[u,v] \qquad &\forall u,v\in T_{w^-} S^2, \\ g(u,v) = -d^2 H(w^+)[u,v] \qquad &\forall u,v\in T_{w^+} S^2.
\end{split}
\end{equation}
Up to the replacement of $g$ by $\frac{1}{2} (g+S^*g)$, we may assume that $g$ is $S$-symmetric. Let $\mathrm{grad}\, H$ be the gradient vector field of $H$ with respect to the metric $g$, and choose $A$ to be the union of a non-constant flow line of $\mathrm{grad}\, H$ and $\{w^+,w^-\}$. Then $A \setminus \{w^+,w^-\}$ is an analytic connected 1-dimensional submanifold of $S^2$  transverse to the flow of $X_H$ and meeting each non-constant orbit exactly once. We must check that $A$ is analytic up the boundary and has the required form in $U^+$ and $U^-$.

In the Darboux-Morse coordinates near $w^{\pm}$, $S$ coincides with the central symmetry $S_0$ mapping $(x,y)$ to $-(x,y)$. In these coordinates, both $g$ and $H$ are invariant under $S_0$ and so is $\mathrm{grad}\, H$. By \eqref{e:diff2}, the $g$-Hessian of $H$ at the critical point $w^{\pm}$ is $\mp \mathrm{id}$, so the linearization of the gradient flow of $H$ at $w^{\pm}$ is the linear flow $(t,w) \mapsto e^{\mp t} w$, $w\in \R^2$. By Poincar\'e's theorem on the linearization of analytic vector fields near a hyperbolic equilibrium, near $w^+$ and $w^-$ the flow of $\mathrm{grad}\, H$ is conjugate to the linearized flow by a unique analytic conjugacy which is tangent to the identity. This conjugacy is equivariant with respect to $S_0$ -- this follows from its uniqueness -- and we conclude that in Darboux-Morse coordinates the union of each gradient flow line with its image by $S_0$  and with the origin is a centrally symmetric analytic 1-dimensional submanifold containing the origin. This proves that $A$ is analytic up to the boundary and that the arcs
$A\cap U^+$ and $A\cap U^-$ have the desired form.
\end{proof}

The next proposition shows that the period function completely determines the Hamiltonian system on the 2-sphere induced by a perfect Morse function.

\begin{proposition}
\label{p:aac-glob}
Let $H_1$, $H_2$ be smooth $($resp.\ analytic$)$ perfect Morse functions on $S^2$ such that 
\[
(\min H_1,\max H_1) = (\min H_2,\max H_2) = J. 
\]
Let $\omega_1$ and $\omega_2$ be smooth $($resp.\ analytic$)$ symplectic forms on $S^2$ and denote by $T_1: J \rightarrow (0,+\infty)$ and $T_2:J \rightarrow (0,+\infty)$ the period functions induced by the Hamiltonian systems $(H_1,\omega_1)$ and $(H_2,\omega_2)$. If $T_1=T_2$ then  there exists a smooth $($resp.\ analytic$)$ diffeomorphism $\zeta: S^2 \rightarrow S^2$ such that
\[
\zeta^* \omega_2 = \omega_1, \qquad \zeta^* H_2 = H_1.
\]
\end{proposition}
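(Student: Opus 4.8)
The plan is to build the diffeomorphism $\zeta$ out of action--angle coordinates on the complement of the two critical points and then to extend it across $w^+$ and $w^-$ by hand, using the Darboux--Morse coordinates \eqref{e:dm} and the transversal arc of Lemma \ref{l:arc}. The starting point is the classical relation between period and enclosed area: for a perfect Morse function each level set $H_j^{-1}(c)$ with $c\in J$ bounds a disk $\{H_j\le c\}$ containing the minimum, and if $A_j(c)$ denotes its $\omega_j$-area then $A_j'(c)=T_j(c)$. Since $A_j(\min H)=0$ and $T_1=T_2$ by hypothesis, integrating gives $A_1=A_2=:A$ on $\overline{J}$. In particular the total areas agree, and the common action $I:=\frac{1}{2\pi}A$ is, as a function of the energy, the same for both systems; it is strictly increasing because $T>0$.

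Next I would fix transversal arcs $A_1,A_2$ as provided by Lemma \ref{l:arc}, one for each system, and use them to normalize time. For $j\in\{1,2\}$ and $z\in S^2\setminus\{w^+,w^-\}$, write $z=\phi_{X_{H_j}}^{t}(a_j(c))$, where $c=H_j(z)$, where $a_j(c)$ is the unique point at which $A_j$ meets the orbit of energy $c$, and $t\in\R/T(c)\Z$. This defines energy--time charts $\Phi_j(z):=(c,t)$ on the annulus, and I set $\zeta:=\Phi_2^{-1}\circ\Phi_1$, i.e.\ $\zeta(z):=\phi_{X_{H_2}}^{t}(a_2(c))$. By construction $\zeta$ sends the orbit of energy $c$ of the first system to that of the second, so $\zeta^*H_2=H_1$, and since $T_1=T_2$ it intertwines the two Hamiltonian flows. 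To see that $\zeta^*\omega_2=\omega_1$ on the annulus I would pass to action--angle coordinates $(I,\varphi)$, where $\varphi:=2\pi t/T(c)\in\R/2\pi\Z$ is the normalized time measured from $A_j$; the action--angle theorem gives $\omega_j=dI\wedge d\varphi$ in these coordinates, and because the action $I=\frac{1}{2\pi}A$ is the same function of $c$ for both systems, $\zeta$ preserves both $I$ and $\varphi$, whence $\zeta^*(dI\wedge d\varphi)=dI\wedge d\varphi$.

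The main obstacle, and the only delicate point, is the smooth (resp.\ analytic) extension of $\zeta$ across $w^+$ and $w^-$, where the action--angle coordinates degenerate. Here I would work in the Darboux--Morse coordinates of \eqref{e:dm}, in which $\omega_j=r_j\,dr_j\wedge d\theta_j$ and $H_j=h_j(r_j^2)$. A direct computation shows that the action is $\frac12 r_j^2$ near the minimum and $\mathrm{const}-\frac12 r_j^2$ near the maximum, with the additive constant (the total area) common to both systems by the previous paragraph; preservation of the action therefore forces $r_2=r_1$. Moreover, as in the proof of Lemma \ref{l:extT} one has $X_{H_j}=-2h_j'(r_j^2)\,\partial_{\theta_j}$, so the normalized angle is $\varphi_j=\pm(\theta_j-\theta_j^{\pm}(r_j^2))$ with a sign that depends only on the monotonicity of $h_j$, hence is common to both systems. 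By Lemma \ref{l:arc} the arc $A_j$ has the form $\{(r,\theta_j^{\pm}(r^2))\}$, so the offsets $\theta_j^{\pm}$ are smooth (resp.\ analytic) functions of $r^2$. Preservation of $\varphi$ then yields $\theta_2=\theta_1+\delta^{\pm}(r_1^2)$, where $\delta^{\pm}(s):=\theta_2^{\pm}(s)-\theta_1^{\pm}(s)$ is smooth (resp.\ analytic).

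Finally, in the complex coordinate $w=x+iy$ of the Darboux--Morse chart this reads $\zeta(w)=w\,e^{i\delta^{\pm}(|w|^2)}$, a twist by an angle depending smoothly (resp.\ analytically) on $|w|^2$; this is manifestly smooth (resp.\ analytic) at the origin, which it fixes. Thus $\zeta$ extends to a smooth (resp.\ analytic) map of $S^2$ fixing $w^+$ and $w^-$. Its local form at the critical points is a local diffeomorphism, and it is a diffeomorphism on the annulus, so the extension is a global diffeomorphism. The identities $\zeta^*\omega_2=\omega_1$ and $\zeta^*H_2=H_1$ hold on the dense annulus and therefore everywhere by continuity, which completes the proof. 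I expect the analytic case of the extension step to be the genuinely hard part: it is precisely where the analyticity of the offsets $\theta_j^{\pm}$ near the critical points, guaranteed by the more delicate analytic statement of Lemma \ref{l:arc}, is indispensable.
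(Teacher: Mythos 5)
Your proof is correct and follows essentially the same route as the paper: both construct $\zeta$ as the flow conjugacy determined by the transversal arcs of Lemma \ref{l:arc} (equivalently, your energy--time charts) and then verify smoothness (resp.\ analyticity) at the two critical points in the Darboux--Morse coordinates \eqref{e:dm}. The only notable variations are that you deduce $\zeta^*\omega_2=\omega_1$ from the canonical form of $\omega_j$ in energy--time/action--angle coordinates, whereas the paper gets it for free from $\zeta^*X_{H_2}=X_{H_1}$ together with $\zeta^*H_2=H_1$ in dimension two, and that your preliminary observation $A_1=A_2$ (forcing $r_2=r_1$, i.e.\ $f_2\circ h_1=\mathrm{id}$) streamlines the local formula, which the paper instead writes as $\zeta(r,\theta)=\bigl(\sqrt{f_2\circ h_1(r^2)},\,\theta+\theta_2(f_2\circ h_1(r^2))-\theta_1(r^2)\bigr)$ and checks to be smooth (resp.\ analytic) directly.
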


\begin{proof}
Denote by $w^+_1$ and $w^-_1$ the critical points of $H_1$, and by $w^+_2$ and $w^-_2$ the critical points of $H_2$, where $H_1(w_1^+)=H_2(w_2^+)=\sup J$ and $H_1(w_1^-)=H_2(w_2^-)=\inf J$. Let $A_1$ and $A_2$ be smooth (resp.\ analytic) connected 1-dimensional submanifolds of $S^2$ with $\partial A_1 = \{w^+_1,w^-_1\}$, $\partial A_2 = \{w^+_2,w^-_2\}$, and satisfying the other conditions listed in Lemma \ref{l:arc} with respect to the vector fields $X_{H_1}$ and $X_{H_2}$. Denote by $\zeta: A_1 \rightarrow A_2$ the map sending the intersection of $A_1$ with $H_1^{-1}(c)$ to the intersection of $A_2$ with $H_2^{-1}(c)$, for every $c\in \overline{J}$. Since $A_1$ and $A_2$ are transverse to the level sets of $H_1$ and $H_2$ away from their boundary, the map $\zeta$ restricts to a smooth (resp.\ analytic) diffeomorphism from $A_1\setminus \partial A_1$ to $A_2\setminus \partial A_2$.

The fact that the non-constant orbits of $X_{H_1}$ and $X_{H_2}$ meet $A_1$ and $A_2$ exactly once, together with the assumption $T_1=T_2$, 
implies that the map $\zeta$ extends uniquely to a map from $S^2$ to $S^2$ which is a conjugacy from the flow of $X_{H_1}$ to the one of $X_{H_2}$. We denote this extension by the same symbol $\zeta$. Since Hamiltonian flows preserve the Hamiltonian and since $X_{H_1}$ and $X_{H_2}$ are transverse to $A_1\setminus \partial A_1$ and $A_2\setminus \partial A_2$, the map $\zeta$ satisfies $\zeta^* H_2 = H_1$ and restricts to smooth (resp.\ analytic) diffeomorphism from $S^2\setminus \{w^+_1,w^-_1\}$ to $S^2\setminus \{w^+_2,w^-_2\}$ such that $\zeta^* X_{H_2} = X_{H_1}$. 

Moreover, $\zeta(w_1^{\pm}) = w_2^{\pm}$ and it remains to check that $\zeta$ is a smooth (resp.\ analytic) diffeomorphism near $w_1^+$ and $w_1^-$. Indeed, once this is proven, the identity $\zeta^* X_{H_2} = X_{H_1}$ and the fact that we are in dimension two imply that $\zeta^* \omega_2 = \omega_1$.

Let $(w_1,w_2)$ be either $(w_1^+,w_2^+)$ or $(w_1^-,w_2^-)$ and consider Darboux-Morse coordinates on neighborhoods $U_1$ of $w_1$ and $U_2$ of $w_2$ such that 
\[
\omega_1 = r \, dr \wedge d\theta, \quad H_1 = h_1(r^2), \qquad \omega_2 = r \, dr \wedge d\theta, \quad H_2 = h_2(r^2),
\]
for some smooth (resp.\ analytic) functions $h_1:[0,\epsilon) \rightarrow \R$ and  $h_2:[0,\epsilon) \rightarrow \R$ with $h_1'$ and $h_2'$ never vanishing, and
\[
A_1 \cap U_1 = \{(r,\theta_1(r^2)) \mid 0\leq r < \sqrt{\epsilon}\}, \qquad 
A_2 \cap U_2 = \{(r,\theta_2(r^2)) \mid 0\leq r < \sqrt{\epsilon}\},
\]
for suitable smooth (resp.\ analytic) functions $\theta_1:[0,\epsilon) \rightarrow \R$ and  $\theta_2:[0,\epsilon) \rightarrow \R$ (see Lemma \ref{l:arc}). Denote by $f_1$ and $f_2$ the inverses of $h_1$ and $h_2$, which are smooth (resp.\ analytic). Then 
\[
H_1(r,\theta) =  H_2(f_2 \circ h_1(r^2),\theta') \qquad \forall r\in [0,\sqrt{\epsilon}), \qquad \forall \theta,\theta'\in \R/2\pi \Z,
\]
and hence $\zeta$ maps the point $(r,\theta_1(r^2))\in A_1 \cap U_1$ to the point $(s,\theta_2(s^2))\in A_2 \cap U_2$ with
\[
s^2 = f_2\circ h_1(r^2).
\]
The assumption $T_1=T_2$ implies that the flows of $X_{H_1}$ and $X_{H_2}$ move the points on the circles of radius $r$ and $s$ with the same constant angular velocity. Therefore, the expression of the map $\zeta$ in these local coordinates is
\[
\zeta(r,\theta) = \bigl( \sqrt{ f_2\circ h_1(r^2) }, \theta + \theta_2(f_2\circ h_1(r^2)) - \theta_1(r^2)\bigr).
\]
The above expression shows that $\zeta$ is smooth (resp.\ analytic) on $U_1$. The expression for the inverse of $\zeta$ is obtained from the above one by exchanging the role of the indices $1$ and $2$, so $\zeta^{-1}$ is smooth (resp.\ analytic) on $U_2$. We conclude that $\zeta: S^2 \rightarrow S^2$ is a smooth (resp.\ analytic) diffeomorphism. 
\end{proof}

\paragraph{\textbf{Proof of Theorem \ref{mainappB}.}} Since $J_1=J_2$ and $T_1=T_2$, Proposition \ref{p:aac-glob} gives us a smooth (resp.\ analytic) diffeomorphism $\zeta: W_1 \rightarrow W_2$ such that
\[
\zeta^* \omega_2 = \omega_1, \qquad \zeta^* H_2 = H_1.
\]
The pull-back bundle $\zeta^* \pi_2$ is a principal $S^1$-bundle over $W_1$ with the same Euler number of the principal $S^1$-bundle $\pi_1$. Since the Euler number classifies principal $S^1$-bundles, these bundles are isomorphic and by composition we obtain a bundle-isomorphism $\tilde\zeta: M_1 \rightarrow M_2$ lifting the map $\zeta$.

Then $\alpha:= \alpha_1$ and $\beta:= \zeta^* \alpha_2$ are $S^1$-invariant contact forms on the same 3-manifold $M:= M_1$ inducing the same Hamiltonian $H:= H_1$ and the same symplectic form $\omega:= \omega_1$ on the quotient manifold $W:= W_1 = M/S^1$. Setting $\pi:= \pi_1$ and $T:= T_1=T_2$,
the assumption $\Theta_1 = \Theta_2$ and the fact that the Reeb flow of $\beta$ is obtained from the Reeb flow of $\alpha_2$ by an $S^1$-equivariant conjugacy
imply that 
\[
\phi_{R_{\alpha}}^{T(H(\pi(x)))}(x) = \phi_{R_{\beta}}^{T(H(\pi(x)))}(x) \qquad \forall x\in M \setminus \pi^{-1}(\{w^+,w^-\}),
\]
where $w^+$ and $w^-$ are the critical points of the perfect Morse function $H$. By Lemma \ref{l:extT}, $T$ has a smooth (resp.\ analytic) extension to the closed interval $\overline{J}=[\min H,\max H]$, and hence the function $\tau:= T \circ H$ is smooth (resp.\ analytic) on $W$ and we can upgrade the above identity to
\begin{equation}
\label{e:ipotesi}
\phi_{R_{\alpha}}^{\tau(\pi(x))}(x) = \phi_{R_{\beta}}^{\tau(\pi(x))}(x) \qquad \forall x\in M.
\end{equation}

It suffices to show that the above identity implies the existence of a smooth (resp.\ analytic) function $h: W\rightarrow \R$ solving the cohomological equation
\begin{equation}
\label{e:dadim}
dh(X_H) = f H,
\end{equation}
where $f: W \rightarrow \R$ is defined by
\begin{equation}
\label{e:defnf}
R_{\beta} - R_{\alpha} = (\pi^* f) V,
\end{equation}
$V$ being the generator of the $S^1$-action on $M$. Indeed, by Proposition \ref{p:Moser} the existence of such $h$ gives us a smooth (resp.\ analytic) diffeomorphism $\psi: M \rightarrow M$ such that $\psi^* \beta = \alpha$, and hence the diffeomorphism $\varphi: M_1=M \rightarrow M_2$ defined as the composition $\varphi=\tilde{\zeta}\circ \psi$ satisfies $\varphi^* \alpha_2 = \alpha_1$.

Since the Reeb flows of $\alpha$ and $\beta$ project to the same flow on $W$, there exists a smooth (resp.\ analytic) family of smooth (resp.\ analytic) functions $\theta_t:W\to \R$ such that
\begin{align}
\label{e:theta_0=0} 
\theta_0 &\equiv0, \\
\label{e:Reeb_beta_to_alpha}
\phi_{R_{\beta}}^t(x) &= \theta_t(\pi(x)) \cdot \phi_{R_{\alpha}}^t(x) \qquad \forall x\in M,
\end{align}
where in the last identity we are identifying the real number $\theta_t(\pi(x))$ with its class in $S^1 = \R/2\pi \Z$. 

The Reeb vector fields $R_\alpha$ and $R_\beta$ coincide on $\pi^{-1}(\{w^-,w^+\})$. Indeed, on this set they are both multiples of $V$, and from the identity $V(R_\alpha)=V(R_{\beta}) = H\circ \pi$ we obtain
\begin{align*}
R_\alpha(x)=R_\beta(x)=\frac{1}{H(\pi(x))}V(x),\qquad\forall x\in\pi^{-1}(\{w^-,w^+\}).
\end{align*}
Therefore 
\begin{align}
\label{e:theta_0_gamma_i}
 \theta_t(w^-)= \theta_t(w^+) = 0,\qquad\forall t\in\R.
\end{align}
By \eqref{e:ipotesi}, the function $y\mapsto \theta_{\tau(y)} (y)$ takes values in $2\pi \Z$, and from \eqref{e:theta_0_gamma_i} we obtain
\begin{align}
\label{e:theta_tau_vanish}
\theta_{\tau(y)}(y)=0
\qquad\forall y\in W. 
\end{align}
By differentiating~\eqref{e:Reeb_beta_to_alpha} with respect to the time variable, we find
\begin{align*}
R_\beta(x)=(\partial_t\theta_t)\circ\phi_{X_H}^{-t}(\pi(x)) V(x) + R_\alpha(x),
\end{align*}
and therefore the function $f$ defined in \eqref{e:defnf} satisfies
\[
f(y)=(\partial_t\theta_t)\circ\phi_{X_H}^{-t}(y) \qquad \forall y\in W, \; \forall t\in \R.
\]
In particular, the right-hand side is independent of the time variable. By integrating over the period of the corresponding Hamiltonian orbit and using~\eqref{e:theta_0=0} and~\eqref{e:theta_tau_vanish}, we find for every $y\in W$:
\begin{align*}
f(y)\tau(y)
&= 
\int_0^{\tau(y)} (\partial_t\theta_t)\circ\phi_{X_H}^{-t}(y)\,dt\\
&= 
\int_0^{\tau(y)} \Big(\partial_t(\theta_t\circ\phi_{X_H}^{-t})(y) + d\theta_t(\phi_{X_H}^{-t}(y))X_H(\phi_{X_H}^{-t}(y))\Big)\,dt\\
&=
\theta_{\tau(y)}\circ\phi_{X_H}^{-\tau(y)}(y)-\theta_0(y) 
+
\int_0^{\tau(y)} d(\theta_t\circ\phi_{X_H}^{-t})(y)X_H(y)\,dt\\
&= 
\int_0^{\tau(y)} d(\theta_t\circ\phi_{X_H}^{-t})(y)X_H(y)\,dt.
\end{align*}
Since $dH(y)X_H(y)=d\tau(y)X_H(y)=0$ and $\tau$ is nowhere vanishing, we deduce that the smooth (resp.\ analytic) function
\begin{align*}
h(y):=\frac{H(y)}{\tau(y)}
\int_0^{\tau(y)} \theta_t\circ\phi_{X_H}^{-t}(y)\,dt \qquad \forall y\in W,
\end{align*}
solves the cohomological equation \eqref{e:dadim}. This concludes the proof of Theorem~\ref{mainappB}.

\begin{bibdiv}
\begin{biblist}

\bib{abe26}{article}{
      author={Abel, N.~H.},
       title={Aufl\"osung einer mechanischen {A}ufgabe},
        date={1826},
     journal={J. Reine Angew. Math.},
      volume={1},
      number={153--157},
}

\bib{abhs17}{article}{
      author={Abbondandolo, A.},
      author={Bramham, B.},
      author={Hryniewicz, U.~L.},
      author={Salom{\~a}o, P. A.~S.},
       title={A systolic inequality for geodesic flows on the two-sphere},
        date={2017},
     journal={Math. Ann.},
      volume={367},
       pages={701\ndash 753},
}

\bib{abhs21}{article}{
      author={Abbondandolo, A.},
      author={Bramham, B.},
      author={Hryniewicz, U.~L.},
      author={Salom{\~a}o, P. A.~S.},
       title={Sharp systolic inequalities for spheres of revolution},
        date={2021},
     journal={Trans. Amer. Math. Soc.},
      volume={374},
       pages={1815\ndash 1845},
}

\bib{arn76}{article}{
      author={Arnold, V.~I.},
       title={Local normal forms of functions},
        date={1976},
     journal={Invent. Math.},
      volume={35},
       pages={87\ndash 109},
}

\bib{bes78}{book}{
      author={Besse, A.~L.},
       title={Manifolds all of whose geodesics are closed},
   publisher={Springer},
        date={1978},
}

\bib{bh84}{article}{
      author={Br\"uning, J.},
      author={Heintze, E.},
       title={Spektrale {S}tarrheit gewisser {D}rehfl{\"a}chen},
        date={1984},
     journal={Math. Ann.},
      volume={269},
      number={95--101},
}

\bib{cie96}{thesis}{
      author={Cieliebak, K.},
       title={Symplectic boundaries: closed characteristics and action
  spectra},
        type={Ph.D. Thesis},
     address={Z\"urich},
        date={1996},
}

\bib{cro90}{article}{
      author={Croke, C.~B.},
       title={Rigidity for surfaces of nonpositive curvature},
        date={1990},
     journal={Comm. Math. Helv.},
      volume={65},
       pages={150\ndash 169},
}

\bib{die70}{article}{
      author={Dieudonn\'e, J.},
       title={Sur une th\'eor\`eme de {G}laeser},
        date={1970},
     journal={J. Analyse Math.},
      volume={23},
       pages={85\ndash 88},
}

\bib{dms16}{article}{
      author={Dryden, E.},
      author={Macedo, D.},
      author={Sena-Dias, R.},
       title={Recovering {$S^1$}-invariant metrics on {$S^2$} from the
  equivariant spectrum},
        date={2016},
     journal={Internat. Math. Res. Notices (IMRN)},
      number={16},
       pages={4882\ndash 4902},
}

\bib{cdvv79}{article}{
      author={de~Verdi\`ere, Y.~Colin},
      author={Vey, J.},
       title={Le lemme de {M}orse isochore},
        date={1979},
     journal={Topology},
      volume={18},
       pages={283\ndash 293},
}

\bib{eli90}{article}{
      author={Eliasson, L.~H.},
       title={Normal forms for {H}amiltonian systems with {P}oisson commuting
  integrals - elliptic case},
        date={1990},
     journal={Comment. Math. Helv.},
      volume={65},
       pages={4\ndash 35},
}

\bib{frv26}{article}{
      author={Ferreira, B.},
      author={Ramos, V. G.~B.},
      author={Vicente, A.},
       title={Gromov width of the disk cotangent bundle of spheres of
  revolution},
        date={2026},
     journal={Adv. Math.},
      volume={487},
      number={110761}
}

\bib{gla63b}{article}{
      author={Glaeser, G.},
       title={Racine carr\'e d'une fonction diff\'erentiable},
        date={1963},
     journal={Ann. Inst. Fourier (Grenoble)},
      volume={13},
       pages={203\ndash 210},
}

\bib{glp25}{article}{
      author={Guillarmou, C.},
      author={Lefeuvre, T.},
      author={Paternain, G.~P.},
       title={Marked length spectrum rigidity for {A}nosov surfaces},
        date={2025},
     journal={Duke Math. J.},
      volume={174},
       pages={131\ndash 157},
}

\bib{gui76}{article}{
      author={Guillemin, V.},
       title={The {R}adon transform on {Z}oll surfaces},
        date={1976},
     journal={Adv. Math.},
      volume={22},
       pages={85\ndash 119},
}

\bib{hor89}{article}{
      author={Horwitz, A.},
       title={Reconstructing a function from its set of tangent lines},
        date={1989},
     journal={Amer. Math. Monthly},
      volume={96},
       pages={807\ndash 813},
}

\bib{ll01}{book}{
      author={Lieb, E.~H.},
      author={Loss, L.},
       title={Analysis},
     edition={Second},
      series={Graduate Studies in Mathematics},
   publisher={American Mathematical Society},
     address={Providence, RI},
        date={2001},
}

\bib{mvn}{unpublished}{
      author={Martynchuk, N.~N.},
      author={Ng\d{o}c, S. V\~{u}},
       title={On the symplectic geometry of {$A_k$} singularities},
        note={\texttt{arXiv:2305.01814}},
}

\bib{ota90}{article}{
      author={Otal, J.-P.},
       title={Le spectre marqu\'e des longueurs des surfaces \`a courbure
  n\'egative},
        date={1990},
     journal={Ann. of Math. (2)},
      volume={134},
       pages={151\ndash 162},
}

\bib{pri09}{article}{
      author={Pries, C.},
       title={Geodesics closed on the projective plane},
        date={2009},
     journal={Geom. Funct. Anal.},
      volume={18},
       pages={1774\ndash 1785},
}

\bib{psu23}{book}{
      author={Paternain, G.~P.},
      author={Salo, M.},
      author={Uhlman, G.},
       title={Geometric inverse problems},
   publisher={Cambridge University Press},
        date={2023},
}

\bib{ric97}{article}{
      author={Richardson, T.~J.},
       title={Total curvature and intersection tomography},
        date={1997},
     journal={Adv. Math.},
      volume={130},
       pages={1\ndash 32},
}

\bib{sch15}{article}{
      author={Schr\"oder, J.~P.},
       title={Ergodic components and topological entropy in geodesic flows of
  surfaces},
        date={2015},
     journal={J. Mod. Dyn.},
      volume={9},
       pages={147\ndash 167},
}

\bib{vey77}{article}{
      author={Vey, J.},
       title={Sur le lemme de {M}orse},
        date={1977},
     journal={Invent. Math.},
      volume={40},
       pages={1\ndash 9},
}

\bib{vey78}{article}{
      author={Vey, J.},
       title={Sur certains systemes dynamiques separables},
        date={1978},
     journal={Amer. J. Math.},
      volume={100},
       pages={591\ndash 614},
}

\bib{zel98}{article}{
      author={Zelditch, S.},
       title={The inverse spectral problem for surfaces of revolution},
        date={1998},
     journal={J. Differential Geom.},
      volume={49},
       pages={207\ndash 264},
}

\bib{zol03}{article}{
      author={Zoll, O.},
       title={Ueber {F}l\"achen mit {S}charen geschlossener geod\"atischer
  {L}inien},
        date={1903},
     journal={Math. Ann.},
      volume={57},
       pages={108\ndash 133},
}

\end{biblist}
\end{bibdiv}

\end{document}